\newtheorem{theorem}{Theorem}
\newtheorem{lemma}{Lemma}
\newtheorem{false statement}{False statement}
\theoremstyle{definition}
\newtheorem{claim}{Claim}
\newtheorem{subclaim}{Claim}[claim]
\newtheorem{problem}{Problem}
\newtheorem{case}{Case}
\newtheorem{subcase}{Case}[case]
\newtheorem{subsubcase}{Case}[subcase]
\newcounter{mathitem}
\newenvironment{mathitem}
  {\begin{list}{{$(\roman{mathitem})$}}{
   \setcounter{mathitem}{0}
   \usecounter{mathitem}
   \setlength{\topsep}{0pt plus 2pt minus 0pt}
   \setlength{\parskip}{0pt plus 2pt minus 0pt}
   \setlength{\partopsep}{0pt plus 2pt minus 0pt}
   \setlength{\parsep}{0pt plus 2pt minus 0pt}
   \setlength{\leftmargin}{35pt}
   \setlength{\itemsep}{0pt plus 2pt minus 0pt}}}
  {\end{list}}
\newcommand{\de}{{\rm def}}
\begin{document}

\title{\bf\Large Pairs of heavy subgraphs for Hamiltonicity of 2-connected graphs\thanks{This paper was supported  by NSFC
(No.~10871158).}}

\date{}

\author{Binlong Li$^a$,
Zden\v{e}k Ryj\'a\v{c}ek$^{b,}$\thanks{Research partially supported
by grant 1M0545 of the Czech Ministry of Education},
Ying Wang$^a$ and Shenggui Zhang$^{a,}$\thanks{Corresponding author.
E-mail address: sgzhang@nwpu.edu.cn (S. Zhang).}\\[2mm]
\small $^a$ Department of Applied Mathematics,
\small Northwestern Polytechnical University,\\
\small Xi'an, Shaanxi 710072, P.R.~China\\
\small $^b$ Department of Mathematics, University of West Bohemia and \\
\small Institute for Theoretical Computer Science, Charles University,\\
\small 30614 Pilsen, Czech Republic}


\maketitle

\begin{abstract}
Let $G$ be a graph on $n$ vertices. An induced subgraph $H$ of $G$
is called heavy if there exist two nonadjacent vertices in $H$ with
degree sum at least $n$ in $G$. We say that $G$ is $H$-heavy if every
induced subgraph of $G$ isomorphic to $H$ is heavy.  For a family
$\mathcal{H}$ of graphs, $G$ is called $\mathcal{H}$-heavy if $G$ is
$H$-heavy for every $H\in\mathcal{H}$. In this paper we characterize
all connected graphs $R$ and $S$ other than $P_3$ (the path on three
vertices) such that every 2-connected $\{R,S\}$-heavy graph is
Hamiltonian. This extends several previous results on
forbidden subgraph conditions for Hamiltonian graphs.

\medskip
\noindent {\bf Keywords:} {Forbidden subgraph};
Heavy subgraph; Hamilton cycle
\smallskip
\end{abstract}

\section{Introduction}

We use Bondy and Murty \cite{Bondy_Murty} for terminology and
notation not defined here and consider finite simple graphs only.

Let $G$ be a graph. For a vertex $v\in V(G)$ and a subgraph $H$ of
$G$, we use $N_H(v)$ to denote the set, and $d_H(v)$ the number, of
neighbors of $v$ in $H$, respectively. We call $d_H(v)$ the
\emph{degree} of $v$ in $H$. For $x,y\in V(G)$, an
$(x,y)$-\emph{path} is a path $P$ connecting $x$ and $y$; the vertex
$x$ will be called the {\em origin} and $y$ the {\em terminus} of
$P$. For $X,Y\subset V(G)$, an $(X,Y)$-\emph{path} is a path having
its origin in $X$ and terminus in $Y$. If $x,y\in V(H)$, the
\emph{distance} between $x$ and $y$ in $H$, denoted $d_H(x,y)$, is
the length of a shortest $(x,y)$-path in $H$. When no confusion
occurs, we will denote $N_G(v)$, $d_G(v)$ and $d_G(x,y)$ by $N(v)$,
$d(v)$ and $d(x,y)$, respectively.

Let $G$ be a graph on $n$ vertices. If a subgraph $G'$ of $G$
contains all edges $xy\in E(G)$ with $x,y\in V(G')$, then $G'$ is
called an \emph{induced subgraph} of $G$. For a given graph $H$, we
say that $G$ is $H$-\emph{free} if $G$ does not contain an induced
subgraph isomorphic to $H$. For a family $\mathcal{H}$ of graphs,
$G$ is called $\mathcal{H}$-free if $G$ is $H$-free for every
$H\in\mathcal{H}$. If $H$ is an induced subgraph of $G$, we say that
$H$ is \emph{heavy} if there are two nonadjacent vertices in $V(H)$
with degree sum at least $n$ in $G$. The graph $G$ is called
$H$-\emph{heavy} if every induced subgraph of $G$ isomorphic to $H$
is heavy. For a family $\mathcal{H}$ of graphs, $G$ is called
$\mathcal{H}$-\emph{heavy} if $G$ is $H$-heavy for every
$H\in\mathcal{H}$. Note that an $H$-free graph is also $H$-heavy.

The graph $K_{1,3}$ is called the \emph{claw}, its (only) vertex of
degree 3 is called its \emph{center} and the other vertices are the
\emph{end vertices}. In this paper, instead of $K_{1,3}$-free
($K_{1,3}$-heavy), we use the terminology claw-free (claw-heavy).

The following
characterization of pairs of forbidden subgraphs for the existence
of Hamilton cycles in graphs is well known.

\begin{theorem}[Bedrossian \cite{Bedrossian}]
Let $R$ and $S$ be connected graphs with $R,S\neq P_{3}$ and let $G$
be a 2-connected graph. Then $G$ being $\{R,S\}$-free implies $G$ is
Hamiltonian if and only if (up to symmetry) $R=K_{1,3}$ and
$S=P_4,P_5,P_6,C_3,Z_1,Z_2,B,N$ or $W$ (see Fig.~1).
\end{theorem}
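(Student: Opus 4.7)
The plan is to establish the two directions separately. For the \emph{necessity} direction, I would argue contrapositively: assuming $\{R,S\}$ is not among the listed pairs (up to symmetry), I exhibit a 2-connected non-Hamiltonian graph that is $\{R,S\}$-free. A small library of witnesses suffices. The complete bipartite graphs $K_{m,m+1}$ with $m\ge 2$ are 2-connected and non-Hamiltonian, and every connected non-$P_3$ induced subgraph of $K_{m,m+1}$, for $m$ sufficiently large, must contain an induced $K_{1,3}$; this forces one of $R,S$ to be $K_{1,3}$ itself, say $R=K_{1,3}$. The Petersen graph is then a 2-connected, claw-free, non-Hamiltonian graph of girth $5$ and diameter $2$, which rules out every $S$ that either has an induced short cycle other than $C_5$ or an induced long path. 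Further claw-free, non-Hamiltonian examples, such as two disjoint $K_t$'s joined by three internally disjoint paths, eliminate the remaining possibilities. A direct inspection of the connected graphs $S$ that survive these constraints produces exactly the nine graphs on the list.

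For the \emph{sufficiency} direction, I would treat each pair $\{K_{1,3},X\}$ in turn, following a common scheme. Let $G$ be a 2-connected $\{K_{1,3},X\}$-free graph; take a longest cycle $C$ in $G$ and suppose for contradiction that $V(G)\setminus V(C)\neq\emptyset$. Choose a component $H$ of $G-C$; by 2-connectivity, $H$ has two distinct attachments $x,y$ on $C$. A shortest $(x,y)$-path through $H$, combined with the neighborhoods of $x,y$ on $C$ and standard chord-exchange moves, must either yield a cycle longer than $C$ or expose an induced $K_{1,3}$ or an induced $X$, both impossible. For the path-like $X\in\{P_4,P_5,P_6\}$, the Ryj\'a\v{c}ek closure is a particularly efficient tool: it reduces the problem to Hamiltonicity of the line graph of a triangle-free graph, where the $X$-free hypothesis translates to a concrete girth or diameter bound under which known line-graph Hamiltonicity theorems apply. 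For the triangle-containing $X$ (namely $C_3,Z_1,Z_2,B,N,W$), a direct cycle-extension argument is more natural, and the triangles occurring in $X$ provide extra leverage on the adjacencies across $C$.

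The principal obstacle is the sufficiency direction for the structurally richest forbidden subgraphs, namely $P_6$, $N$ and $W$. In these cases the longest-cycle analysis branches into many subcases according to the local structure of $C$ near $x$ and $y$ and the lengths of the successive ``gaps'' on $C$, and in each branch one must locate either a cycle augmentation or a specific forbidden induced subgraph. To keep the bookkeeping manageable, I would impose a secondary optimization on the chosen configuration $(C,H,x,y)$, for instance minimizing an $(x,y)$-path through $H$, or minimizing the length of a shortest arc of $C$ connecting two attachments, so that extremal situations are eliminated at the outset; the remaining cases should then fall to a unified neighborhood analysis exploiting the claw-freeness and the explicit shape of $X$.
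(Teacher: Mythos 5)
First, a structural point: the paper does not prove this statement. It is Bedrossian's theorem, quoted with a citation to his thesis, so there is no ``paper's own proof'' to compare against; you are on your own.

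Your necessity argument contains a fatal factual error. The Petersen graph is \emph{not} claw-free: it is $3$-regular with girth $5$, so the three neighbors of any vertex are pairwise nonadjacent, and therefore every vertex is the center of an induced $K_{1,3}$. Since the Petersen graph is your main witness for ruling out forbidden subgraphs $S$ that ``have a short cycle or long path,'' that step collapses. The correct route is to use genuinely claw-free $2$-connected non-Hamiltonian graphs (of the kind the paper itself exhibits in Fig.~2, or the two-cliques-joined-by-three-paths family you mention in passing), and those must be chosen with enough variety to kill all candidate $S$ except the nine on the list; you have not exhibited such a family nor verified the residue. The $K_{m,m+1}$ step is also stated incorrectly: $K_{2,2}=C_4$ and $K_{1,1}$ are connected induced subgraphs of $K_{m,m+1}$ other than $P_3$ that contain no induced $K_{1,3}$, so the conclusion that one of $R,S$ equals $K_{1,3}$ does not follow from $K_{m,m+1}$ alone; one needs further witnesses (e.g.\ complete multipartite examples) to rule out $K_{a,b}$ with $a,b\ge 2$.

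The sufficiency sketch is too coarse to be checked and contains a dubious reduction: the Ryj\'a\v c ek closure of a claw-free graph preserves claw-freeness and the circumference, but it does \emph{not} in general preserve $P_i$-freeness, so ``$\{K_{1,3},P_6\}$-free $\Rightarrow$ closure is $P_6$-free'' needs a separate argument (or one must work with a weaker invariant of the closure). Also, invoking the closure is anachronistic for Bedrossian's 1991 theorem, which is not itself a flaw but signals you would in effect be giving a different, modern proof; in that case the onus is on you to supply the closure-preservation lemmas. For the ``triangle-containing'' cases your plan (longest cycle, shortest detour, neighborhood analysis) is the standard and correct skeleton, but exactly the branching you identify as the ``principal obstacle'' is where the entire content of the proof lives, and none of it is carried out. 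As written, neither direction is a proof.
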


\begin{center}
\begin{picture}(360,200)

\thicklines

\put(5,140){\multiput(20,30)(50,0){5}{\put(0,0){\circle*{6}}}
\put(20,30){\line(1,0){100}} \put(170,30){\line(1,0){50}}
\qbezier[4](120,30)(145,30)(170,30) \put(20,35){$v_1$}
\put(70,35){$v_2$} \put(120,35){$v_3$} \put(170,35){$v_{i-1}$}
\put(220,35){$v_i$} \put(115,10){$P_i$}}

\put(265,130){\put(20,30){\circle*{6}} \put(70,30){\circle*{6}}
\put(45,55){\circle*{6}} \put(20,30){\line(1,0){50}}
\put(20,30){\line(1,1){25}} \put(70,30){\line(-1,1){25}}
\put(40,10){$C_3$}}

\put(0,0){\put(20,30){\circle*{6}} \put(70,30){\circle*{6}}
\multiput(45,55)(0,25){4}{\put(0,0){\circle*{6}}}
\put(20,30){\line(1,0){50}} \put(20,30){\line(1,1){25}}
\put(70,30){\line(-1,1){25}} \put(45,55){\line(0,1){25}}
\put(45,105){\line(0,1){25}} \qbezier[4](45,80)(45,92.5)(45,105)
\put(50,80){$v_1$} \put(50,105){$v_{i-1}$} \put(50,130){$v_i$}
\put(40,10){$Z_i$}}

\put(90,0){\put(45,40){\circle*{6}} \put(45,40){\line(-1,1){25}}
\put(45,40){\line(1,1){25}} \put(20,65){\line(1,0){50}}
\multiput(20,65)(50,0){2}{\multiput(0,0)(0,30){2}{\put(0,0){\circle*{6}}}
\put(0,0){\line(0,1){30}}} \put(25,10){$B$ (Bull)}}

\put(180,0){\multiput(20,30)(50,0){2}{\multiput(0,0)(0,30){2}{\put(0,0){\circle*{6}}}
\put(0,0){\line(0,1){30}}}
\multiput(45,85)(0,30){2}{\put(0,0){\circle*{6}}}
\put(45,85){\line(0,1){30}} \put(20,60){\line(1,0){50}}
\put(20,60){\line(1,1){25}} \put(70,60){\line(-1,1){25}}
\put(25,10){$N$ (Net)}}

\put(270,0){\put(45,30){\circle*{6}} \put(20,55){\line(1,0){50}}
\put(45,30){\line(1,1){25}} \put(45,30){\line(-1,1){25}}
\multiput(20,55)(0,30){2}{\put(0,0){\circle*{6}}}
\multiput(70,55)(0,30){3}{\put(0,0){\circle*{6}}}
\put(20,55){\line(0,1){30}} \put(70,55){\line(0,1){60}}
\put(10,10){$W$ (Wounded)}}

\end{picture}

\small Fig. 1. Graphs $P_i,C_3,Z_i,B,N$ and $W$.
\end{center}

Our aim in this paper is to consider the corresponding heavy subgraph condition for
a graph to be Hamiltonian. First, we notice that every 2-connected
$P_3$-heavy graph contains a Hamilton cycle. This can be easily
deduced from the following result.

\begin{theorem}[Fan \cite{Fan}]
Let $G$ be a 2-connected graph. If $\max\{d(u),d(v)\}\geq n/2$ for
every pair of vertices with distance 2 in $G$, then $G$ is
Hamiltonian.
\end{theorem}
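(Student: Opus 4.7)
The plan is to argue by contradiction. Assume $G$ is $2$-connected, satisfies the Fan hypothesis, and is not Hamiltonian. Let $C$ be a longest cycle in $G$, cyclically oriented, and write $u^{+}$, $u^{-}$ for the successor and predecessor of $u\in V(C)$. Since $G$ is not Hamiltonian, some component $H$ of $G-V(C)$ is non-empty, and $2$-connectivity of $G$ forces $H$ to be joined to $C$ by at least two attachment vertices; equivalently, there is a vertex $x\in V(H)$ together with two internally disjoint paths to distinct vertices $u,v\in V(C)$.

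My first step would be to extract restrictions on the local structure at attachment vertices from the maximality of $C$ via the classical rotation arguments: if $u$ is an attachment vertex of $H$ then $u^{+}$ and $u^{-}$ have no neighbour in $H$, and if $u,v$ are two attachment vertices joined by an $H$-path of length $\geq 2$ then $u^{+}v^{+},u^{-}v^{-}\notin E(G)$ (otherwise one reroutes through a chord plus the $H$-detour to produce a longer cycle). Taking $u,v$ as the endpoints of the two internally disjoint paths from $x\in V(H)$, a short case check shows $uv\notin E(G)$ while $u,v$ share a common neighbour inside $H$, so $d_{G}(u,v)=2$. Fan's hypothesis applied to this pair yields $\max\{d(u),d(v)\}\geq n/2$.

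With $d(u)\geq n/2$, say, the vertex $u$ sends many chords into $C$, and I would close by the ``crossing chord plus $H$-detour'' trick: combining a well-chosen chord incident with $u$ with the $H$-path between $u$ and $v$ produces a cycle strictly longer than $C$, contradicting its maximality. The main obstacle, compared with an Ore-type argument, is that the Fan condition only controls $\max\{d(u),d(v)\}$ and not the sum $d(u)+d(v)$, so each single application tells us that just \emph{one} endpoint of a distance-$2$ pair has high degree. The substantive technical work is therefore to propagate this high-degree information along $C$ — by applying the condition to the further distance-$2$ pairs created at the chords of $u$ — until enough vertices of $C$ carry degree $\geq n/2$ to force a crossing-chord configuration compatible with the $H$-detour.
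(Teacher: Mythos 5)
The paper does not prove this statement; Theorem~2 is quoted from Fan~\cite{Fan} as a known input to the rest of the paper, so there is no internal proof to compare against, and your proposal has to be judged on its own.

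Your opening is fine (longest cycle $C$, nontrivial component $H$ of $G-V(C)$, the standard rotation facts that $u^{+},u^{-}$ have no neighbour in $H$ and that $u^{+}v^{+},u^{-}v^{-}\notin E(G)$ when $u,v$ are attachment vertices joined by an external path of length at least two). But two intermediate claims are asserted without justification and are not in general true. First, ``$uv\notin E(G)$'' does not follow from a short case check: if $uv$ is a chord of $C$ (rather than an edge of $C$), replacing an arc of $C$ by the $H$-path need not produce a longer cycle, so no contradiction arises and the attachment vertices $u,v$ may well be adjacent. Second, ``$u,v$ share a common neighbour inside $H$'' is not automatic: $2$-connectivity only guarantees two internally disjoint $(x,C)$-paths, which may have length greater than one, so $u$ and $v$ need not have a common neighbour in $H$ and $d_G(u,v)$ need not equal $2$. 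In either of these situations the Fan hypothesis cannot be applied to the pair $(u,v)$ as you propose.

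More fundamentally, even granting $\max\{d(u),d(v)\}\ge n/2$, say $d(u)\ge n/2$, the closing step is not a proof. The ``crossing chord plus $H$-detour'' trick needs degree information about \emph{two} vertices of a pair (as in Ore's theorem, where $d(v_1)+d(v_k)\ge k$ forces a crossing pair of chords along a longest path). A single vertex of degree $\ge n/2$ does not by itself produce a crossing chord compatible with the $H$-detour. You correctly identify this difficulty, but the paragraph that would ``propagate this high-degree information along $C$ \ldots until enough vertices of $C$ carry degree $\ge n/2$'' is precisely the substance of Fan's argument, and it is left entirely undone. As it stands the proposal is a plausible strategy outline with an acknowledged missing core, not a proof.
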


It is not difficult to see that $P_3$ is the only connected graph
$S$ such that every 2-connected $S$-heavy graph is Hamiltonian. So
we have the following problem.

\begin{problem}
\label{Probl1}
Which two connected graphs $R$ and $S$ other than $P_3$ imply that
every 2-connected $\{R,S\}$-heavy graph is Hamiltonian?
\end{problem}

By Theorem 1, we get that (up to symmetry) $R=K_{1,3}$ and $S$
must be some of the graphs $P_4,P_5,P_6,$ $C_3,Z_1,Z_2,B,N$ or $W$.

In this paper we prove the following results.

\begin{theorem}
If $G$ is a 2-connected $\{K_{1,3},W\}$-heavy graph, then $G$ is
Hamiltonian.
\end{theorem}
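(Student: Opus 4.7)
The plan is to argue by contradiction: suppose $G$ is a $2$-connected, non-Hamiltonian, $\{K_{1,3},W\}$-heavy graph on $n$ vertices. The strategy is to use the Bondy--Chv\'atal closure $\widehat G := cl(G)$, which is non-Hamiltonian, $2$-connected, and satisfies $d(x)+d(y)<n$ for every non-adjacent pair $\{x,y\}$. If one can show that $\widehat G$ inherits the $\{K_{1,3},W\}$-heavy property, then the closure condition rules out any induced claw or induced $W$ in $\widehat G$ (since every such copy would otherwise contain a non-adjacent pair with degree sum $\ge n$, contradicting the closure). Hence $\widehat G$ would be $\{K_{1,3},W\}$-free, and Bedrossian's Theorem~1 would yield a Hamilton cycle in $\widehat G$, hence in $G$, a contradiction.

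The core technical ingredient is therefore a \emph{closure lemma}: adding an edge $uv$ with $d_G(u)+d_G(v)\ge n$ preserves both $K_{1,3}$-heaviness and $W$-heaviness. Any newly-created induced copy of $K_{1,3}$ or $W$ in $G+uv$ must contain both $u$ and $v$ with $uv$ an edge of the subgraph, so $uv$ occupies one of finitely many positions inside the copy. For each inequivalent position, one needs to locate a non-adjacent pair inside the induced copy whose degree sum in $G+uv$ is at least $n$, using the high joint degree $d_G(u)+d_G(v)\ge n$ together with the independence relations forced by the induced-subgraph structure, typically via a neighbor-counting argument on $N_G(u)\cup N_G(v)$.

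The $K_{1,3}$ case is short: the added edge $uv$ must join the center of the claw to an end vertex, leaving only one position to analyze. The main obstacle will be the verification for $W$: with seven vertices consisting of a triangle together with pendant paths of lengths $1$ and $2$, there are several inequivalent positions for $uv$ (inside the triangle, on either pendant, or on a triangle-to-pendant edge), and each must be handled separately. If a particular position proves recalcitrant---because removing $uv$ from $W$ does not leave a standard heavy subgraph---the fallback is a direct longest-cycle argument: take a longest cycle $C$ and an off-cycle vertex $u$; the standard consequences of maximality produce non-adjacencies among $u$ and successors on $C$ of its $C$-neighbors, from which one assembles an induced claw or an induced $W$. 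Applying the heavy hypothesis then supplies a chord of $C$ allowing a rotation that yields a longer cycle through $u$, contradicting the choice of $C$.
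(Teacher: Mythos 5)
Your proposal hinges on a closure lemma: if $uv\notin E(G)$ and $d_G(u)+d_G(v)\ge n$, then $G+uv$ remains $\{K_{1,3},W\}$-heavy. This lemma is false, and it already fails for the claw, i.e.\ at the one position you declared ``short''. Take disjoint cliques $A,B$ with $|A|=|B|=m\ge 2$, a vertex $u$ joined to all of $A\cup B$, and a vertex $v$ joined to exactly one vertex $w_1\in A$ and one vertex $w_2\in B$. This graph $G$ has $n=2m+2$ vertices, is $2$-connected, and is even $\{K_{1,3},W\}$-free (every neighbourhood is covered by at most two cliques, so there is no induced claw, and a short check rules out $W$). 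Here $u\not\sim v$ and $d(u)+d(v)=(n-2)+2=n$, but in $G+uv$ the four vertices $u,v,a,b$ with $a\in A\setminus\{w_1\}$ and $b\in B\setminus\{w_2\}$ induce a claw centred at $u$ whose three non-adjacent pairs have degree sums at most $\max\{m+3,\,2m\}<n$. So a single closure step already creates a non-heavy claw; the ``neighbour-counting on $N_G(u)\cup N_G(v)$'' you invoke cannot succeed, because $G$ itself contains no heavy structure from which to count. A broader warning sign is already in the paper: Fig.~2 exhibits a $2$-connected, $\{K_{1,3},P_6\}$-heavy, non-Hamiltonian graph even though $\{K_{1,3},P_6\}$ is a Bedrossian pair; if the Bondy--Chv\'atal closure preserved pair-heaviness in the way you need, no such example could exist.

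Since the closure strategy collapses at its very first and simplest case, the entire weight of the proof falls on your ``fallback'', which as written is a single sentence of the form ``take a longest cycle, locate an induced claw or $W$, apply heaviness, rotate''. That is indeed the shape of the paper's argument, but the substance is missing: the actual proof requires the machinery of $(u,v,w)$-composed graphs with carriers and canonical orderings (Lemma~1), Ore-cycles (Lemma~2), and $x$-good pairs on the longest cycle (Lemma~3), and it reaches Theorem~3 only indirectly, by proving the stronger Theorem~\ref{thm-triple1} for $\{K_{1,3},N_{1,1,2},D\}$-heavy graphs and then observing that $W$ is an induced subgraph of both $N_{1,1,2}$ and $D$. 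Your sketch supplies none of this and does not explain how the rotation argument survives under the mere ``heavy'' hypothesis (where the induced claw or $W$ only gives you a pair of vertices with large degree sum, not a genuine non-edge to contract away); as written it is not a proof.
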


\begin{theorem}
If $G$ is a 2-connected $\{K_{1,3},N\}$-heavy graph, then $G$ is
Hamiltonian.
\end{theorem}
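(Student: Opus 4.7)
The plan is to argue by contradiction: assume $G$ is a 2-connected $\{K_{1,3},N\}$-heavy graph that is not Hamiltonian, and let $C$ be a longest cycle of $G$, with a fixed cyclic orientation; write $v^+$ and $v^-$ for the successor and predecessor of $v$ on $C$. Let $H$ be a component of the non-empty graph $G-V(C)$. By 2-connectedness, at least two distinct vertices of $C$ have neighbours in $V(H)$; I would pick two such attachment vertices $u_1,u_2$ so that the standard consequences of the maximality of $C$ hold, namely no vertex of $H$ is adjacent to any $u_i^{\pm}$, the cycle cannot be rerouted through $H$, and edges such as $u_1^+u_2^+$ are forbidden whenever $u_1,u_2$ are non-consecutive on $C$.

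The main engine is to look, at each attachment $u_i$, at the four vertices $\{u_i,u_i^-,u_i^+,x\}$ where $x\in V(H)\cap N(u_i)$. By maximality, $x$ is not adjacent to $u_i^{\pm}$, so this quadruple induces either a $K_{1,3}$ centred at $u_i$ (if $u_i^-u_i^+\notin E(G)$) or a triangle plus pendant (if $u_i^-u_i^+\in E(G)$). In the first case, claw-heaviness delivers two vertices of $\{x,u_i^-,u_i^+\}$ with degree sum at least $n$; combined with the usual insertion lemma (each common neighbour of such a pair off $C$ allows a strictly longer cycle), this forces $|V(C)|\geq n$, contradicting $V(H)\neq\emptyset$. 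One may therefore assume that $u_i^-u_i^+\in E(G)$ at every attachment on $C$.

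Under this triangular assumption I would build an induced copy of $N$ on six vertices carefully chosen from $\{u_i^{--},u_i^{-},u_i,u_i^{+},u_i^{++}\}\cup V(H)$: the triangle is $\{u_i^-,u_i,u_i^+\}$, and the three pendants are $u_i^{--}$, $u_i^{++}$, together with a vertex $x\in V(H)$ attached to $u_i$. Provided the candidate non-edges (e.g.\ $u_i^{--}u_i^+$, $u_i^-u_i^{++}$, $u_i^{--}u_i^{++}$, and edges from $u_i^{\pm\pm}$ into $V(H)$) can be established from the maximality of $C$ together with an auxiliary application of the claw-heavy hypothesis to supplementary 4-subsets, the net appears as an induced subgraph. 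Net-heaviness then yields two non-adjacent vertices of $V(N)$ with degree sum at least $n$, and a rotation-plus-insertion argument along $C$ converts this into a cycle strictly longer than $C$ or a Hamilton cycle, contradicting the choice of $C$.

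The main obstacle is the case analysis needed to guarantee that the candidate copy of $N$ is actually induced: the potential chords listed above would either collapse $N$ into a different induced subgraph (such as $K_{1,3}$ itself, or an induced $W$, $B$ or $Z_i$) or create short cuts that defeat the cycle-extension step. Handling these side-cases --- and in particular pinpointing, from the heaviness hypotheses, degree-sum inequalities on precisely the pair of non-adjacent vertices of $V(N)$ whose paths through $G$ admit a cycle-lengthening reroute of $C$ --- is the delicate technical core of the argument.
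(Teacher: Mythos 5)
Your outline captures the right setting (longest cycle $C$, attachment vertices, mining claws and nets for heavy pairs), but it glosses over exactly the places where the argument is hard, and several of your concrete claims are false.

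First, in the claw case ($u_i^-u_i^+\notin E(G)$) you assert that claw-heaviness "forces $|V(C)|\geq n$." This is not so. The pairs $\{x,u_i^-\}$ and $\{x,u_i^+\}$ are indeed ruled out from being heavy (by the Ore-path/Ore-cycle argument: such a heavy pair plus a spanning $(x,u_i^\pm)$-path in $C\cup H$ gives an $o$-cycle, hence a longer cycle). But the remaining possibility $d(u_i^-)+d(u_i^+)\geq n$ is a genuine situation that does not contradict $|V(C)|<n$ by itself; it only tells you $u_i^-u_i^+\in\overline E(G)$. The paper handles this carefully (Claims 1--4 of Sections 3/4): one shows that the two attachment vertices cannot \emph{both} have a heavy (nonadjacent) predecessor/successor pair, so WLOG $u_1u_{-1}\in E(G)$ at one endpoint of the connecting path --- but only at one. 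You cannot reduce to a triangle at every attachment point.

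Second, even where $u_i^-u_i^+\in E(G)$, your proposed induced copy of $N$ on $\{u_i^{--},u_i^-,u_i,u_i^+,u_i^{++},x\}$ is not forced: none of $u_i^{--}u_i$, $u_i^{--}u_i^+$, $u_i^{++}u_i$, $u_i^{++}u_i^-$, $u_i^{--}u_i^{++}$ is excluded by maximality of $C$, and "an auxiliary application of the claw-heavy hypothesis to supplementary 4-subsets" does not obviously dispose of them. The paper's solution to this is the composed-subgraph apparatus: one iteratively enlarges an interval $[u_{-k},u_\ell]$ around $u_0$ on $C$, maintaining the property that $G[u_{-k},u_\ell]$ is $(u_{-k},u_0,u_\ell)$-composed, and proves (Claim 7/Claim 6) that if no nonadjacent heavy pair exists in the enlarged interval then the relevant induced subgraph is $\{K_{1,3},N_{1,1,2}\}$-free (or $\{K_{1,3},N_{1,1,2},H_{1,1}\}$-free in Section 4). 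That structural constraint is what eventually produces an induced claw or net -- not a single attempt on five fixed consecutive cycle vertices plus one vertex of $H$.

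Third, "a rotation-plus-insertion argument converts this into a longer cycle" is precisely the nontrivial part. In the paper this is carried by the notion of an $x$-good pair and Lemma 3, which says that compatible good pairs at \emph{both} attachment vertices $u_0$ and $v_0$ can be spliced (via the $o$-cycle Lemma 2) into a cycle through $V(C)\cup V(R)$. Your proposal never addresses how a single heavy pair inside a net leads to a longer cycle. Finally, note that the paper does not prove Theorem 4 directly but as a corollary of the stronger Theorem 9, replacing $N$ by the pair $\{N_{1,1,2},H_{1,1}\}$ (both of which contain induced $N$); the composed-subgraph growth argument genuinely uses these larger target graphs, which is further evidence that the five-vertex local net search cannot be made to work as stated.
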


At the same time, we find a 2-connected $\{K_{1,3},P_6\}$-heavy
graph which is not Hamiltonian (see Fig. 2).

\begin{center}
\begin{picture}(300,160)

\put(80,70){\circle{120}} \put(80,10){\line(1,0){120}}
\put(80,130){\line(1,0){120}} \qbezier(200,10)(140,130)(80,130)
\qbezier(200,70)(140,10)(80,10) \qbezier(200,70)(140,130)(80,130)
\qbezier(200,130)(140,10)(80,10)

\thicklines

\multiput(200,10)(0,60){3}{\put(0,0){\circle*{6}}
\put(60,0){\circle*{6}} \put(30,20){\circle*{6}}
\put(0,0){\line(1,0){60}} \put(0,0){\line(3,2){30}}
\put(60,0){\line(-3,2){30}}} \put(260,10){\line(0,1){120}}
\qbezier(260,10)(300,70)(260,130)

\put(75,65){$K_r$} \put(205,120){$x_1$} \put(205,60){$x_2$}
\put(205,0){$x_3$} \put(225,140){$y_1$} \put(225,80){$y_2$}
\put(225,20){$y_3$} \put(245,120){$z_1$} \put(245,60){$z_2$}
\put(245,0){$z_3$}

\end{picture}

\small Fig. 2. A 2-connected $\{K_{1,3},P_6\}$-heavy non-Hamiltonian
graph ($r\geq 5$).
\end{center}

Besides, we can also construct a 2-connected claw-free and
$P_6$-heavy graph which is not Hamiltonian. This can be shown as
follows: Let $G$ be the graph in Fig. 2, where $r\geq 15$ is an
integer divisible by 3. Let $V_1,V_2,V_3$ be a balanced partition of
$K_r$ and $G'$ be the graph obtained from $G$ by deleting all the
edges in $\bigcup_{i=1}^3\{x_iv:v\in V_i\}$. Then $G'$ is a 2-connected
claw-free and $P_6$-heavy graph which is not Hamiltonian.

Note that $W$ contains induced $P_4,P_5,C_3,Z_1,Z_2$ and $B$. So we
have

\begin{theorem}
\label{Thm5}
Let $R$ and $S$ be connected graphs with $R,S\neq P_3$ and let $G$
be a 2-connected graph. Then $G$ being $\{R,S\}$-heavy implies $G$
is Hamiltonian if and only if (up to symmetry) $R=K_{1,3}$ and
$S=P_4,P_5,C_3,Z_1,Z_2,B,N$ or $W$.
\end{theorem}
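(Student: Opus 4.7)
The plan is to assemble Theorem~\ref{Thm5} from four ingredients already on the page: Bedrossian's Theorem~1, Theorems~3 and~4 proved in this paper, the example of Fig.~2, and one elementary monotonicity observation about the heavy condition.

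For necessity, I would start from the trivial remark that every $H$-free graph is $H$-heavy, so the hypothesis of Theorem~\ref{Thm5} is formally stronger than its free analogue. Theorem~1 then forces (up to symmetry) $R=K_{1,3}$ and $S\in\{P_4,P_5,P_6,C_3,Z_1,Z_2,B,N,W\}$. To exclude $S=P_6$ I would verify that the graph $G$ of Fig.~2 (with $r\ge 5$) is 2-connected, $\{K_{1,3},P_6\}$-heavy, and not Hamiltonian. Heaviness reduces, in both the claw case and the $P_6$ case, to the observation that any induced claw has leaves $\{x_1,x_2,x_3\}$ with a centre $v\in V(K_r)$, and any induced $P_6$ must use at least two of $x_1,x_2,x_3$ as vertices (a brief case check excludes the ``at most one $x_i$'' possibilities, since the triangle on $\{x_i,y_i,z_i\}$ and the triangle on $\{z_1,z_2,z_3\}$ cut off longer induced paths). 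In either case one finds a nonadjacent pair of degree sum $2(r+2)\ge r+9=n$. Non-Hamiltonicity follows because $d(y_i)=2$ forces the edges $x_iy_i, y_iz_i$ in any Hamilton cycle, so each $z_i$ must use exactly one further edge going to another $z_j$; but then the three such edges used among $\{z_1,z_2,z_3\}$ would form a $1$-regular graph on three vertices, which is impossible.

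For sufficiency, the cases $S=W$ and $S=N$ are Theorems~3 and~4 respectively. The remaining choices $S\in\{P_4,P_5,C_3,Z_1,Z_2,B\}$ all reduce to Theorem~3 by the following monotonicity: \emph{if $H'$ is an induced subgraph of $H$, then every $H'$-heavy graph is $H$-heavy}, because an induced copy of $H$ in $G$ contains an induced copy of $H'$, and the nonadjacent pair certified inside $H'$ remains a nonadjacent pair of $G$ lying in $V(H)$. Inspecting $W$ in Fig.~1, one sees that $P_4$, $P_5$, $C_3$, $Z_1$, $Z_2$, and $B$ all occur as induced subgraphs of $W$ (the base triangle supplies $C_3$, the two arms yield the induced paths and $Z_2$, and the two pendants attached to distinct triangle vertices give $B$), so $\{K_{1,3},S\}$-heavy implies $\{K_{1,3},W\}$-heavy and Theorem~3 closes the argument.

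The only part where a slip could occur is the verification of heaviness and non-Hamiltonicity for the graph of Fig.~2; the rest is a direct citation of Theorems~1, 3, and 4 together with the one-line monotonicity observation above. The real work of the paper therefore lies in the separate proofs of Theorems~3 and~4, not in Theorem~\ref{Thm5} itself.
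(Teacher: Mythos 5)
Your proof is correct and follows the same route as the paper: cite Bedrossian's Theorem~1 together with the remark that free implies heavy for necessity, rule out $P_6$ via the Fig.~2 example, invoke Theorems~3 and~4 for $W$ and $N$, and reduce the remaining cases to $W$ by the monotonicity of the heavy condition under induced subgraphs. The paper states this in compressed form (``Note that $W$ contains induced $P_4,P_5,C_3,Z_1,Z_2$ and $B$'') and leaves the Fig.~2 verification implicit; your proposal just fills in those checks.
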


Thus, Theorem~\ref{Thm5} gives a complete answer to Problem~\ref{Probl1}.

{For claw-heavy graphs, Chen \emph{et al.} get the following
result.}

\begin{theorem}[Chen, Zhang and Qiao \cite{Chen_Zhang_Qiao}]
Let $G$ be a 2-connected graph. If $G$ is claw-heavy and moreover,
$\{P_{7},D\}$-free or $\{P_{7},H\}$-free, then $G$ is Hamiltonian
(see Fig. 3).
\end{theorem}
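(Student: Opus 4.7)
The plan is to proceed by contradiction: assume $G$ is a 2-connected claw-heavy graph that is either $\{P_7,D\}$-free or $\{P_7,H\}$-free, and yet $G$ has no Hamilton cycle. Let $C$ be a longest cycle of $G$ with a fixed cyclic orientation, and let $R=G-V(C)$. Since $G$ is non-Hamiltonian, $V(R)\neq\emptyset$. By 2-connectivity, for any component $R_{0}$ of $R$ there exist two internally disjoint paths from $R_{0}$ to $C$ meeting $C$ in distinct vertices $u_{1},u_{2}$; I would choose $R_{0}$, the paths, and a longest cycle so as to minimize the length of the shorter arc of $C$ joining $u_{1}$ and $u_{2}$, and so that $R_{0}$ has minimum order. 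This kind of extremal choice is standard and serves to pin down a very rigid local structure.

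Next, I would run the classical Pósa/Bondy longest-cycle analysis at $u_{1},u_{2}$: the successors $u_{1}^{+},u_{2}^{+}$ (and predecessors $u_{1}^{-},u_{2}^{-}$) must be non-adjacent, no chord of $C$ joins the ``wrong'' pair of these neighborhoods, and the insertion technique forces the existence of several induced claws centered at $u_{i}^{+}$ or $u_{i}^{-}$ with one end vertex on $C$, one from the arc through $u_{j}$, and one external. Applying the \emph{claw-heavy} hypothesis to each such claw produces two non-adjacent end-vertices $x,y$ with $d(x)+d(y)\geq n$. Via an Ore/Bondy--Chv\'atal type argument on $C$ this degree sum translates into a long alternating ``zig-zag'' of non-edges along $C$ through $u_{1}$ and $u_{2}$, giving a long induced path.

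The decisive step is then to splice this induced path along $C$ with the external path through $R_{0}$ into an induced $P_{7}$. Because $G$ is $P_{7}$-free, this yields the desired contradiction in the generic case. The residual cases are precisely those where the arc on $C$ is too short, or the external excursion uses only one or two vertices of $R_{0}$, so that only an induced $P_{6}$ is produced; there the forced configuration is rigid enough that one may exhibit an induced copy of $D$ (respectively $H$) instead, contradicting the remaining forbidden-subgraph hypothesis. Each of these residual configurations is a finite check once the extremal choices above are in place.

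The main obstacle I expect is the induced-subgraph bookkeeping in the penultimate step: the claw-heavy hypothesis only supplies a \emph{single} high-degree pair per induced $K_{1,3}$, so to extract a long \emph{induced} path one must juggle many claws simultaneously while certifying that none of the potential chords across $C$ or between $C$ and $R_{0}$ can exist. Using the minimality of $R_{0}$ and of the short arc, together with careful choice of which claw to apply the heavy hypothesis to, should keep the case analysis finite; still, enumerating the small number of configurations in which neither $P_{7}$ nor $D$ (nor $H$) can be found will likely constitute the bulk of the technical work.
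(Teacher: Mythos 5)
The paper does not prove this statement: it is Theorem~6, cited verbatim from Chen, Zhang, and Qiao \cite{Chen_Zhang_Qiao} and used here only as a black box (to observe that $P_6$-free implies $\{P_7,D\}$-free, which feeds Theorem~7). There is therefore no in-paper proof to compare your sketch against; what the paper does prove by related methods are Theorems~8 and~9, which are stronger triple-heaviness results.

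As an argument in its own right, your sketch captures the right general flavor (longest cycle $C$, a shortest attaching path, claw-heaviness applied to claws centered near the attachment, Ore-type insertion), and this is indeed the machinery this paper develops for Theorems~8 and~9 via $o$-cycles, ``composed'' subgraphs, and ``$x$-good'' pairs. But the pivotal inference---that the degree-sum pairs supplied by claw-heaviness ``translate into a long alternating zig-zag of non-edges along $C$, giving a long induced path'' from which an induced $P_7$ can then be spliced---is a genuine gap. A high degree sum on a non-adjacent pair, taken together with the longest-cycle hypothesis, gives you an $o$-cycle that must collapse to an actual cycle of the same vertex set (this is the content of Lemma~2 here); it does \emph{not} produce additional non-edges, and nothing in the sketch shows that the non-edges you do obtain line up along $C$ into an induced $P_7$. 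Proofs of this type instead argue a local dichotomy in a bounded neighborhood of the attachment: either some small configuration is an induced claw (or $P_7$, $D$, $H$, $N_{1,1,2}$, etc.), whose heaviness or forbiddenness forces an edge or a heavy pair, or that edge/heavy pair lets you reroute $C$ through the external path via an $o$-cycle, contradicting maximality. The ``finite residual cases'' you defer are, in practice, the bulk of such a proof, and your sketch does not yet identify the invariant (analogous to the paper's composed subgraph and $u_0$-good pair bookkeeping) that would make the enumeration terminate.
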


\begin{center}
\begin{picture}(180,130)

\thicklines

\put(0,0){\put(45,30){\circle*{6}} \put(45,30){\line(-1,1){25}}
\put(45,30){\line(1,1){25}} \put(20,55){\line(1,0){50}}
\multiput(20,55)(50,0){2}{\multiput(0,0)(0,30){3}{\put(0,0){\circle*{6}}}
\put(0,0){\line(0,1){60}}} \put(25,10){$D$ (Deer)}}

\put(90,0){\multiput(20,35)(0,75){2}{\multiput(0,0)(50,0){2}{\put(0,0){\circle*{6}}}
\put(0,0){\line(1,0){50}}} \put(45,72.5){\circle*{6}}
\put(20,35){\line(2,3){50}} \put(70,35){\line(-2,3){50}}
\put(10,10){$H$ (Hourglass)}}

\end{picture}

\small Fig. 3. Graphs $D$ and $H$.
\end{center}

{It is clear that every $P_6$-free graph is also
$\{P_7,D\}$-free. Thus we have that every 2-connected claw-heavy and
$P_6$-free graph is Hamiltonian.} Together with Theorems 3 and 4, we
have the following characterization:

\begin{theorem}
Let $S$ be a connected graph with $S\neq P_{3}$ and let $G$ be a
2-connected claw-heavy graph. Then $G$ being $S$-free implies $G$ is
Hamiltonian if and only if
$S=P_{4},P_{5},P_{6},C_{3},Z_{1},Z_{2},B,N$ or $W$.
\end{theorem}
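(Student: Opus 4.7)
My plan is to treat this theorem as a consolidation of Theorems~3, 4, 6 together with the ``only if'' direction of Bedrossian's Theorem~1, reducing each direction to one of these inputs with essentially no new combinatorial work beyond routine induced-subgraph checks inside Figures~1 and~3.

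For the \emph{sufficiency} direction, suppose $G$ is 2-connected, claw-heavy and $S$-free with $S$ in the list. I would split into four cases. If $S=W$, apply Theorem~3 directly; if $S=N$, apply Theorem~4. For $S\in\{P_4,P_5,C_3,Z_1,Z_2,B\}$, I would first verify by inspection of Figure~1 that every such $S$ is an induced subgraph of $W$: the triangle $C_3$ and its decorations $Z_1, Z_2, B$ all sit on the unique triangle of $W$, and $P_4, P_5$ appear as induced subpaths of $W$; hence $S$-freeness implies $W$-freeness, and \emph{a fortiori} $W$-heaviness, so that Theorem~3 applies again. Finally, for $S=P_6$, the paper has already observed that $P_6$ is induced both in $P_7$ (trivially) and in the deer graph $D$ (the two length-$2$ pendant paths of $D$ together with an edge of its triangle form an induced $P_6$); thus $P_6$-freeness implies $\{P_7,D\}$-freeness and Theorem~6 applies.

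For the \emph{necessity} direction, let $S\neq P_3$ be connected and not in the list. I would invoke the ``only if'' part of Bedrossian's Theorem~1 applied to the pair $(R,S)=(K_{1,3},S)$: since this pair is not among the Hamiltonian pairs listed there, there exists a 2-connected $\{K_{1,3},S\}$-free graph that is not Hamiltonian. Such a graph contains no induced claw, so it is vacuously claw-heavy, and it is $S$-free by construction; hence it supplies the required counterexample. The boundary case $S=K_{1,3}$ is covered by exactly the same reduction, which produces a 2-connected claw-free non-Hamiltonian graph. I anticipate no substantive obstacle beyond the mechanical figure-chasing needed for the induced-subgraph containments $P_4, P_5, C_3, Z_1, Z_2, B \subseteq_{\mathrm{ind}} W$ and $P_6 \subseteq_{\mathrm{ind}} D$, together with the trivial observation that every claw-free graph is vacuously claw-heavy.
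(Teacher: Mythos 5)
Your argument is correct and mirrors the paper's own reduction: the sufficiency direction is handled case-by-case exactly as the paper does (Theorem~3 for $S=W$ and, via the induced-subgraph containments $P_4,P_5,C_3,Z_1,Z_2,B\subseteq_{\mathrm{ind}} W$, for the remaining triangle- and short-path cases; Theorem~4 for $S=N$; Theorem~6 together with $P_6\subseteq_{\mathrm{ind}} P_7$ and $P_6\subseteq_{\mathrm{ind}} D$ for $S=P_6$), and the necessity direction is drawn from the ``only if'' half of Bedrossian's Theorem~1 combined with the observation that claw-free graphs are vacuously claw-heavy. The only difference is cosmetic: you spell out the figure-chasing and the $S=K_{1,3}$ boundary case, which the paper leaves implicit.
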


The necessity of this theorem follows from Theorem 1 immediately.

It is known that the only 2-connected $\{K_{1,3},Z_3\}$-free
non-Hamiltonian graphs have 9 vertices (see
\cite{Faudree_Gould_Ryjacek_Schiermeyer}), hence for $n\geq 10$,
every 2-connected $\{K_{1,3},Z_3\}$-free graph is  also Hamiltonian.
This leads to the following

\begin{problem}
Is every 2-connected $\{K_{1,3},Z_3\}$-heavy graph on $n\geq 10$
vertices Hamiltonian?
\end{problem}

Instead of Theorems 3 and 4, we prove the following two stronger results.

\begin{theorem}
\label{thm-triple1}
If $G$ is a 2-connected $\{K_{1,3},N_{1,1,2},D\}$-heavy graph,
then $G$ is Hamiltonian (see Fig. 4).
\end{theorem}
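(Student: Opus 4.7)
The strategy is to argue by contradiction: assume $G$ is a 2-connected $\{K_{1,3},N_{1,1,2},D\}$-heavy graph that is not Hamiltonian. Let $C$ be a longest cycle of $G$, fix a cyclic orientation of $C$, and for $u\in V(C)$ denote by $u^+$ and $u^-$ its successor and predecessor on $C$. Since $G$ is not Hamiltonian, there exists some $x\in V(G)\setminus V(C)$, and 2-connectedness together with Menger's theorem provides two internally disjoint $(x,V(C))$-paths ending at distinct attachment vertices $u_1,u_2\in V(C)$.

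The first step is to extract local non-adjacency information from the maximality of $C$. Standard longest-cycle arguments forbid certain bypasses: for example, if $x$ is adjacent to $u\in V(C)$ then the adjacencies $xu^+$, $xu^-$, and $u^+u^-$ cannot combine with arcs of $C$ to produce an extension of $C$ through $x$, which leads to a forbidden configuration. Combined with the $K_{1,3}$-heavy hypothesis applied to candidate claws such as $\{u,u^+,u^-,x\}$ (when $x$ is adjacent to $u$ and $u^+u^-\notin E(G)$), one obtains degree-sum inequalities $d(a)+d(b)\geq n$ for specified nonadjacent pairs from $\{x,u^+,u^-\}$. These will be used repeatedly to constrain later configurations.

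The core of the argument is to exhibit in $G$ either an induced $N_{1,1,2}$ or an induced $D$ in which no nonadjacent pair has degree sum $\geq n$, directly contradicting the corresponding heaviness hypothesis. Both target graphs consist of a triangle with pendant paths ($D=N_{0,2,2}$ has two pendants of length $2$, while $N_{1,1,2}$ has two of length $1$ and one of length $2$). The required triangle will be located either inside $C$, when a chord such as $u^+u^-$ is forced, or as a structure formed by two overlapping attachment arcs from a vertex of $V(G)\setminus V(C)$; the pendant paths of the required lengths are then assembled by walking along $C$ from the triangle's vertices and by using $x$ (and, where needed, further vertices of the external component containing $x$) as additional pendant vertices. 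The longest-cycle analysis provides the non-edges needed to ensure that the chosen seven-vertex set induces exactly the target graph.

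The main obstacle will be the case analysis, organized according to the cyclic distance between the attachment vertices, the chord pattern of $C$ near them, and which of the exceptional bypass edges are present. Each subcase must be resolved either by constructing a cycle longer than $C$ (contradicting its maximality) or by displaying an induced copy of $K_{1,3}$, $N_{1,1,2}$, or $D$ all of whose nonadjacent pairs have degree sum strictly less than $n$. Because $N_{1,1,2}$ and $D$ have seven vertices each, verifying the induced structure in any specific case requires careful checking of many adjacencies, and combining the degree-sum bounds obtained from earlier claw analyses with the new heaviness conditions to rule out unwanted edges between distant vertices on $C$ is the most delicate part of the proof.
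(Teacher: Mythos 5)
Your outline captures the high-level skeleton the paper also uses---contradiction, longest cycle $C$, shortest external bridge $R$, use heaviness to produce either a non-heavy induced forbidden subgraph or a cycle longer than $C$---but it does not identify the machinery that makes either branch actually close, and that machinery constitutes the bulk of the argument. The paper first shows (Claims 1--4, using only claw-heaviness and longest-cycle arguments) that without loss of generality $u_1 u_{-1} \in E(G)$, which seeds a ``$(u_{-1},u_0,u_1)$-composed'' subgraph. The engine is Claim 7: whenever the composed window $G[u_{-k},u_\ell]$ cannot be grown by a 1- or 2-extension and all nonadjacent pairs in $[u_{-k-1},u_{\ell+1}]$ have degree sum $< n$, the heaviness hypotheses force the window plus $\{z_1,z_2\}$ to be genuinely $\{K_{1,3},N_{1,1,2}\}$-free, and a breadth-first layering $N_i$ from $u_{-k-1}$, with a delicate induction showing each layer is a clique, produces an induced $N_{1,1,2}$ inside that auxiliary graph. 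Your plan to ``walk along $C$ from a triangle'' does not locate the required induced copy, because the certified non-edges and small degree sums live inside the composed window and among the near-attachment vertices $\{z_1,z_2,u_0,u_{\pm1},v_{\pm1}\}$; the composed subgraph is not a segment of $C$, and its internal Hamilton-path structure (Lemma 1) is what ultimately threads the longer cycle.

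The other branch---``constructing a cycle longer than $C$''---needs the Ore-cycle Lemma 2, Lemma 1's spanning disjoint path pairs in composed graphs, and Lemma 3 (``good pairs''). Once $k,\ell$ are chosen maximal subject to composedness and all internal nonadjacent pairs having degree sum $< n$, the degree-sum bound that finally prevents further extension makes one of $(u_{-k-1},u_\ell)$, $(u_{-k},u_{\ell+1})$, $(u_{-k-1},u_{\ell+1})$ a $u_0$-good pair (Claim 8); a symmetric argument on the other side of $C$ produces a $v_0$-good pair (Claim 9); and Lemma 3 then routes a cycle through $V(C)\cup V(R)$, contradicting maximality. Without a surrogate for composed graphs and good pairs your case analysis has no termination mechanism when the hoped-for forbidden copy is absent because a needed non-edge fails. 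Finally, $N_{1,1,2}$ and $D$ are not used symmetrically as your outline suggests: $N_{1,1,2}$-heaviness drives the entire layering argument, while $D$-heaviness is invoked exactly once, in the special Case $r=1$ with $u_0v_0\in E(G)$, to rule out the configuration $\{z_1,v_0,v_\ell,v_{\ell+1},u_0,u_{-k},u_{-k-1}\}$; that case has to be split off because there $z_2=v_0$ lies on $C$ and several of the earlier claims change.
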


\begin{theorem}
\label{thm-triple2}
If $G$ is a 2-connected  $\{K_{1,3},N_{1,1,2},H_{1,1}\}$-heavy
graph, then $G$ is Hamiltonian (see Fig. 4).
\end{theorem}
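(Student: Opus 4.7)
The plan is to argue by contradiction: suppose $G$ is a 2-connected $\{K_{1,3}, N_{1,1,2}, H_{1,1}\}$-heavy graph that is not Hamiltonian, and fix a longest cycle $C = v_1 v_2 \cdots v_c v_1$ with a cyclic orientation. Since $V(G)\setminus V(C)\neq\emptyset$, some component $R$ of $G - V(C)$ is non-empty, and 2-connectivity gives two internally disjoint paths from $R$ to distinct cycle vertices $v_p, v_q$. Let $u \in V(R)$ be a neighbor of $v_p$ on one of these paths. Standard longest-cycle arguments force $v_{p-1}u, v_{p+1}u \notin E(G)$, and if also $v_{p-1}v_{p+1} \notin E(G)$ the set $\{v_p, v_{p-1}, v_{p+1}, u\}$ induces a claw centered at $v_p$. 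The $K_{1,3}$-heavy hypothesis then gives two non-adjacent vertices of this claw of degree sum at least $n$; the analogous analysis applies at $v_q$ and at every other attachment of $R$ to $C$. The case $v_{p-1}v_{p+1} \in E(G)$ will be kept aside to be exploited structurally (a triangle at $v_p$) rather than by claw-heaviness.

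The heart of the proof is to build from this local data an induced copy of $N_{1,1,2}$ or $H_{1,1}$ and apply the corresponding heavy hypothesis. A natural source of induced $N_{1,1,2}$ is a triangle incident to $v_p$ (coming either from a short chord such as $v_{p-1}v_{p+1}$ or from an $R$-vertex forming a triangle with $v_p$) augmented with pendants taken from $u$, from a further $R$-vertex, and from vertices stepping along $C$; a natural source of induced $H_{1,1}$ is a second triangle sharing $v_p$ with the first, arising either from two $R$-attachments at $v_p$ that together form a triangle, or from two short chords at $v_p$. Once verified to be induced, each such configuration supplies through $N_{1,1,2}$- or $H_{1,1}$-heaviness an additional pair of non-adjacent vertices of total degree at least $n$, tightening the constraints coming from claw-heaviness.

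The concluding step is to combine these degree-sum estimates with classical cycle rotation, segment insertion, and rerouting through $R$ to construct a cycle of $G$ strictly longer than $C$, contradicting its maximality; when some forced pair has $d(u)+d(v)\ge n$ with $d(u,v)=2$, Theorem~2 provides a direct shortcut via Fan's distance-$2$ condition applied to an auxiliary pair. The main obstacle is ensuring that the candidate copies of $N_{1,1,2}$ and $H_{1,1}$ are actually \emph{induced}: chords of $C$ and cross-edges between $V(C)$ and $V(R)$ can collapse a candidate, and the bulk of the case analysis is devoted to showing that whenever a candidate collapses it creates a stronger configuration (an extra triangle, an insertable segment, or a shorter rerouting) that itself advances the proof. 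I expect the hardest sub-case to be the one in which $|V(R)|=1$, the two cycle-neighbors of $u$ are close together on $C$, and $C$ possesses several short chords, since then both the $N_{1,1,2}$ and the $H_{1,1}$ candidates risk vanishing simultaneously; closing that sub-case will likely require iterating claw-heaviness along successive cycle-segments and finishing via Fan's condition rather than by direct subgraph heaviness.
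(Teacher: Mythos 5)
Your high-level framing is right: fix a longest cycle $C$, take an attachment path $R$ from $V(G)\setminus V(C)$, and use the claw-heaviness at the endpoints of $R$ to get non-adjacent degree-sum pairs, then invoke $N_{1,1,2}$- and $H_{1,1}$-heaviness on configurations built from triangles near the attachment. But the proposal stops exactly where the real work begins, and two of the shortcuts you propose do not hold up.

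First, your appeal to Theorem~2 (Fan's condition) is a misreading of that theorem. Fan requires $\max\{d(u),d(v)\}\geq n/2$ for \emph{every} distance-$2$ pair; knowing that \emph{one} pair of non-adjacent vertices in a claw has $d(u)+d(v)\geq n$ gives neither the maximum condition nor anything about other pairs, so it cannot be used ``as a shortcut.'' Second, and more importantly, ``classical cycle rotation, segment insertion, and rerouting'' is not by itself a mechanism for turning local degree-sum pairs scattered around $C$ into a longer cycle, and you give no concrete replacement. The paper's actual proof introduces machinery specifically to bridge this gap: the notion of an $o$-cycle together with Lemma~2 (any $o$-cycle is dominated by a genuine cycle on at least as many vertices), the notion of a $(u,v,w)$-\emph{composed} subgraph with its canonical ordering (built by $1$- and $2$-extensions from a triangle), and the notion of an $x$-\emph{good} pair on $C$. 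Lemma~3 then says that if both $u_0$ and $v_0$ admit good pairs on the appropriate arcs, one obtains a cycle containing $V(C)\cup V(R)$. None of this appears in your outline.

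The place where $N_{1,1,2}$- and $H_{1,1}$-heaviness are actually used is also quite different from what you sketch. You propose to look for induced copies of $N_{1,1,2}$ or $H_{1,1}$ directly around $v_p$ and worry about chords collapsing them. The paper instead works inside a maximal composed subgraph $G[u_{-k},u_\ell]$ on which \emph{all} non-adjacent pairs already have degree sum $<n$; on such a subgraph the three heaviness hypotheses degrade to genuine forbidden-subgraph conditions (Claim~6.2: $G''$ is $\{K_{1,3},N_{1,1,2},H_{1,1}\}$-free), and the argument then runs a BFS-layer analysis ($N_i=\{x: d_{G'}(x,u_{-k-1})=i\}$), showing each layer is a clique and deriving a contradiction from an induced $N_{1,1,2}$ or $H_{1,1}$ produced by an extremal vertex. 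That is the engine that keeps extending the composed interval until a good pair is forced (Claims~7, 8), after which Lemma~3 finishes. Your proposal never isolates a region on which heaviness becomes freeness, never defines the layer structure, and has no analogue of the composed/good-pair bookkeeping, so as written it cannot be completed without essentially reinventing the paper's framework. The attempt is not a different route; it is an incomplete sketch of the same starting point with the central lemma missing.
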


\begin{center}
\begin{picture}(180,160)

\thicklines

\put(0,0){\multiput(20,30)(50,0){2}{\multiput(0,0)(0,30){2}{\put(0,0){\circle*{6}}}
\put(0,0){\line(0,1){30}}}
\multiput(45,85)(0,30){3}{\put(0,0){\circle*{6}}}
\put(45,85){\line(0,1){60}} \put(20,60){\line(1,0){50}}
\put(20,60){\line(1,1){25}} \put(70,60){\line(-1,1){25}}
\put(35,10){$N_{1,1,2}$}}

\put(90,0){\multiput(20,35)(0,75){2}{\multiput(0,0)(50,0){2}{\put(0,0){\circle*{6}}}
\put(0,0){\line(1,0){50}}} \put(45,72.5){\circle*{6}}
\put(20,35){\line(2,3){50}} \put(70,35){\line(-2,3){50}}
\multiput(20,110)(50,0){2}{\put(0,0){\line(0,1){30}}
\put(0,30){\circle*{6}}} \put(35,10){$H_{1,1}$}}

\end{picture}

\small Fig. 4. Graphs $N_{1,1,2}$ and $H_{1,1}$.
\end{center}

Note that Brousek~\cite{Brousek} gave a complete characterization
of triples of connected graphs $K_{1,3},X,Y$ such that a graph $G$
being 2-connected and $\{K_{1,3},X,Y\}$-free implies $G$ is Hamiltonian.
Clearly, if $K_{1,3},S,T$ is a triple such that every 2-connected $\{K_{1,3},S,T\}$-heavy graph is Hamiltonian, then, for some triple
$K_{1,3},X,Y$ of \cite{Brousek}, $S$ and $T$ are induced subgraphs
of $X$ and $Y$, respectively (of course, the triples of
Theorems~\ref{thm-triple1} and \ref{thm-triple2} have this property).
We refer an interested reader to \cite{Brousek} for more details.

\section{Some preliminaries}

We first give some additional terminology and notation.

Let $G$ be a graph and $X$ be a subset of $V(G)$.
The subgraph of $G$ induced by the set $X$ is denoted $G[X]$.
We use $G-X$ to denote the subgraph induced by $V(G)\setminus X$.

Throughout this paper, $k$ and $\ell$ will always denote positive
integers, and we use $s$ and $t$ to denote integers which may be
nonpositive. For $s\leq t$, we use $[x_s,x_t]$ to denote the set
$\{x_s,x_{s+1},\ldots,x_t\}$. If $[x_s,x_t]$ is a subset of the
vertex set of a graph $G$, we use $G[x_s,x_t]$, instead of
$G[[x_s,x_t]]$, to denote the subgraph induced by $[x_s,x_t]$ in
$G$.

For a path $P$ and $x,y\in V(P)$, $P[x,y]$ denotes the subpath of
$P$ from $x$ to $y$. Similarly, for a cycle $C$ with a given
orientation and $x,y\in V(C)$, $\overrightarrow{C}[x,y]$ or
$\overleftarrow{C}[y,x]$ denotes the $(x,y)$-path on $C$ traversed
in the same or opposite direction with respect to the given
orientation of $C$.

Let $G$ be a graph and $x_1,x_2,y_1,y_2\in V(G)$ with $x_1\neq x_2$
and $y_1\neq y_2$. Then a subgraph $Q$ of $G$ such that $Q$ has
exactly 2 components, each of them being an
$(\{x_1,x_2\},\{y_1,y_2\})$-path, is called an
{\em $(x_1x_2,y_1y_2)$-disjoint path pair}, or briefly an
{\em $(x_1x_2,y_1y_2)$-pair} in $G$.

If $G$ is a graph on $n\geq 2$ vertices, $x\in V(G)$, and a graph
$G'$ is obtained from $G$ by adding a (new) vertex $y$ and a pair of
edges $yx$, $yz$, where $z$ is an arbitrary vertex of $G$, $z\neq
x$, we say that $G'$ is a {\em 1-extension of $G$ at $x$ to $y$}.
Similarly, if $x_1,x_2\in V(G)$, $x_1\neq x_2$, then the graph $G'$
obtained from $G$ by adding two (new) vertices $y_1,y_2$ and the
edges $y_1x_1$, $y_2x_2$ and $y_1y_2$ is called the {\em 2-extension
of $G$ at $(x_1,x_2)$ to $(y_1,y_2)$}.

Let $G$ be a graph and let $u,v,w\in V(G)$ be distinct vertices of $G$.
We say that $G$ is {\em $(u,v,w)$-composed} (or briefly {\em composed})
if $G$ has a spanning subgraph $D$ (called the {\em carrier} of $G$)
such that there is an ordering $v_{-k},\ldots,v_0,\ldots,v_\ell$
($k,\ell\geq 1$) of $V(D)$ (=$V(G)$) and a sequence of graphs
$D_1,\ldots,D_r$ ($r\geq 1$) such that
\begin{mathitem}
  \item[$(a)$] $u=v_{-k}$, $v=v_0$, $w=v_\ell$,
  \item[$(b)$] $D_1$ is a triangle with $V(D_1)=\{v_{-1},v_0,v_1\}$,
  \item[$(c)$] $V(D_i)=[v_{-k_i},v_{\ell_i}]$ for some $k_i$, $\ell_i$,
     $1\leq k_i\leq k$, $1\leq\ell_i\leq\ell$, and $D_{i+1}$ satisfies
     one of the following:
     \begin{mathitem}
       \item $D_{i+1}$ is a 1-extension of $D_i$ at $v_{-k_i}$ to
          $v_{-k_i-1}$ or at $v_{\ell_i}$ to $v_{\ell_i+1}$,
       \item $D_{i+1}$ is a 2-extension of $D_i$ at
          $(v_{-k_i},v_{\ell_i})$ to $(v_{-k_i-1},v_{\ell_i+1})$,
     \end{mathitem}
     $i=1,\ldots,r-1$,
  \item[$(d)$] $D_t=D$.
\end{mathitem}

The ordering $v_{-k},\ldots,v_0,\ldots,v_\ell$ will be called a {\em
canonical ordering} and the sequence $D_1,\ldots,D_r$ a {\em
canonical sequence} of $D$ (and also of $G$). Note that a composed
graph $G$ can have several carriers, canonical orderings and
canonical sequences. Clearly, a composed graph $G$ and any its
carrier $D$ are 2-connected, for any canonical ordering,
$P=v_{-k}\cdots v_0\cdots v_\ell$ is a Hamilton path in $D$ (called
a {\em canonical path}), and if $D_1,\ldots,D_r$ is a canonical
sequence, then any $D_i$ is $(v_{-k_i},v_0,v_{\ell_i})$-composed,
$i=1,\ldots,r$.

Now we give a lemma on composed graphs which will be needed in our
proofs.

\begin{lemma}
Let $G$ be a composed graph and let $D$ and
$v_{-k},\ldots,v_0,\ldots,v_\ell$ be a carrier and a canonical ordering
of $G$. Then
  \begin{mathitem}
    \item $D$ has a Hamilton $(v_0,v_{-k})$-path,
    \item for every $v_s\in V(G)\setminus\{v_{-k}\}$, $D$ has a spanning
      $(v_0v_\ell,v_sv_{-k})$-pair.
  \end{mathitem}
\end{lemma}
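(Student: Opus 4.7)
The plan is to prove (i) and (ii) simultaneously by induction on the length $r$ of a canonical sequence $D_1,\dots,D_r$ of $D$. In the base case $r=1$, $D_1$ is the triangle on $\{v_{-1},v_0,v_1\}$: the path $v_0v_1v_{-1}$ witnesses (i), and for (ii) the trivial one-vertex path $\{v_0\}$ together with the edge $v_1v_{-1}$ handles $v_s=v_0$, while $\{v_1\}$ together with the edge $v_0v_{-1}$ handles $v_s=v_1$.

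For the inductive step, I would distinguish three cases according to how $D_{i+1}$ is built from $D_i$. Case (A) is a 1-extension at $v_{-k_i}$ introducing $v_{-k_i-1}$ adjacent to $v_{-k_i}$ and some $v_z\in V(D_i)\setminus\{v_{-k_i}\}$. For (i) I would simply append the edge $v_{-k_i}v_{-k_i-1}$ to the Hamilton $(v_0,v_{-k_i})$-path supplied by the induction hypothesis. For (ii) with $v_s\neq v_{-k_i}$, I would take the inductive pair and extend its $v_{-k_i}$-component by the same edge; for (ii) with $v_s=v_{-k_i}$, I would instead invoke (ii) on $D_i$ with $v_s'=v_z$ and extend the $v_z$-component by $v_zv_{-k_i-1}$. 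Case (C) is a 2-extension introducing two new vertices each of degree $2$, where $v_{-k_i-1}$ is adjacent only to $v_{-k_i}$ and $v_{\ell_i+1}$, and $v_{\ell_i+1}$ is adjacent only to $v_{\ell_i}$ and $v_{-k_i-1}$; a Hamilton $(v_0,v_{-k_i-1})$-path in $D_{i+1}$ is then forced to end as $\cdots v_{\ell_i}v_{\ell_i+1}v_{-k_i-1}$, so it suffices to produce a Hamilton $(v_0,v_{\ell_i})$-path in $D_i$. Such a path is available by applying (i) to the reversed canonical sequence of $D_i$, which is itself canonical and interchanges the roles of $v_{-k_i}$ and $v_{\ell_i}$ while fixing $v_0$. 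Part (ii) of Case (C) is handled by the same symmetric trick.

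The main obstacle will be Case (B), the 1-extension at $v_{\ell_i}$ introducing $v_{\ell_i+1}$ adjacent to $v_{\ell_i}$ and some $v_z\in V(D_i)\setminus\{v_{\ell_i}\}$. The target Hamilton $(v_0,v_{-k_i})$-path lives on the opposite side of the canonical ordering from the new vertex, and because $v_{\ell_i+1}$ has degree only $2$ in $D_{i+1}$, it must occur internally, flanked by $v_{\ell_i}$ and $v_z$. I would apply (ii) on $D_i$ with $v_s=v_z$ to obtain a spanning $(v_0v_{\ell_i},v_zv_{-k_i})$-pair; if its two components are a $(v_0,v_z)$-path and a $(v_{\ell_i},v_{-k_i})$-path, then splicing in $v_{\ell_i+1}$ between $v_z$ and $v_{\ell_i}$ yields the required Hamilton path. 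The delicate point is that (ii) as stated does not control how the four endpoints are paired across the two components; the alternative pairing, a $(v_0,v_{-k_i})$-path together with a $(v_{\ell_i},v_z)$-path, does not directly merge through $v_{\ell_i+1}$. I expect to resolve this by strengthening (ii) in the inductive statement to track which endpoint pairing is realized (for each $v_s$), and to carry both forms through the construction so that the one needed by the next extension is always on hand. Once this bookkeeping is in place, the remaining parts of (ii) in Cases (B) and (C) follow by essentially the same concatenation argument used for (i).
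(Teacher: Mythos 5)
Your approach is genuinely different from the paper's, and the gap you yourself flag in Case (B) is fatal rather than repairable by the strengthening you sketch. The statement you would need is roughly: for every $v_s$, $D$ admits a spanning $(v_0v_\ell,v_sv_{-k})$-pair realizing the specific pairing $(v_0,v_s)+(v_\ell,v_{-k})$. This is false. Take $D_1$ the triangle on $v_{-1},v_0,v_1$; let $D_2$ be the $1$-extension of $D_1$ at $v_1$ to $v_2$ with $v_2$ adjacent to $v_1$ and $v_0$; and let $D_3$ be the $1$-extension of $D_2$ at $v_2$ to $v_3$ with $v_3$ adjacent to $v_2$ and $v_z=v_1$. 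In $D_2$ (so $v_\ell=v_2$, $v_{-k}=v_{-1}$) with $v_s=v_1$, the only spanning $(v_0v_2,v_1v_{-1})$-pair is $\{v_0v_{-1}\}\cup\{v_2v_1\}$, i.e.\ the pairing $(v_0,v_{-1})+(v_2,v_1)$; the pairing $(v_0,v_1)+(v_2,v_{-1})$ your Case~(B) requires does not exist (any $(v_2,v_{-1})$-path in $D_2$ must pass through $v_0$ or $v_1$, destroying the other component). Yet $D_3$ does have a Hamilton $(v_0,v_{-1})$-path, namely $v_0v_2v_3v_1v_{-1}$; but deleting $v_3$ from this path produces $\{v_0v_2\}\cup\{v_1v_{-1}\}$, which pairs $v_0$ with $v_\ell$ and $v_z$ with $v_{-k}$ and hence is not an $(\{v_0,v_\ell\},\{v_z,v_{-k}\})$-pair at all. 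So the object you need in Case~(B) is not always reachable from statement (ii), no matter how you track the pairing; you would need an auxiliary claim about spanning $(v_0v_s,v_\ell v_{-k})$-pairs (a different grouping), which is not what (ii) provides and would have to be proved separately.

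The paper sidesteps this entirely: it proves (i) by induction on $|V(D)|$ without ever invoking (ii). In the analogue of your Case~(B)/(C), it takes the other neighbor $v_s$ of the last vertex $v_\ell$, drills down to the \emph{minimal} member $D'=D_{\widehat{s+1}}$ of the canonical sequence containing $v_{s+1}$ (so that $v_{s+1}$ is an endpoint of $D'$ and has degree exactly $2$ there, forcing the edge $v_sv_{s+1}$ onto any Hamilton path of $D'$), and then stitches the Hamilton $(v_0,v_t)$-path of $D'$ together with the two outer pieces of the canonical path $Q$ plus the chord $v_\ell v_s$. Because $v_{s+1}$'s degree-$2$ property pins down which edges the path must use, the construction is insensitive to how the endpoints split, and only afterwards is (ii) deduced from (i) by deleting one forced edge. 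If you want to keep a simultaneous induction, you should adopt this ``minimal sub-carrier'' device rather than trying to control pairings in (ii).
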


\begin{proof}
Let $D_1,\ldots,D_r$ be a canonical sequence and $Q$ the canonical path
of $D$ corresponding to the given ordering and, for every
$s\in [-k,\ell]\setminus\{0\}$, let $\hat{s}$, $1\leq\hat{s}\leq r$,
be the smallest integer for which $v_s\in V(D_{\hat{s}})$. Clearly,
$d_{D_{\hat{s}}}(v_s)=2$.

Now we prove $(i)$ by induction on $|V(D)|$.
If $|V(D)|=3$, the assertion is trivially true.
Suppose now that $|V(D)|\geq 4$ and the assertion is true for
every graph with at most $|V(D)|-1$ vertices. By the
definition of a carrier, we have the following two cases.

\begin{case}
$V(D_{r-1})=[v_{-k+1},v_\ell]$ and $D$ is a 1-extension of $D_{r-1}$
at $v_{-k+1}$ to $v_{-k}$.
\end{case}

By the induction hypothesis, $D_{r-1}$ has a Hamilton
$(v_0,v_{-k+1})$-path $P'$. Then $P=v_0P'v_{-k+1}v_{-k}$ is a
Hamilton $(v_0,v_{-k})$-path in $D$.

\begin{case}
$V(D_{r-1})=[v_{-k},v_{\ell-1}]$ and $D$ is a 1-extension of
$D_{r-1}$ at $v_{\ell-1}$ to $v_\ell$, or
$V(D_{r-1})=[v_{-k+1},v_{\ell-1}]$ and $D$ is a 2-extension of
$D_{r-1}$ at $(v_{-k+1},v_{\ell-1})$ to $(v_{-k},v_\ell )$.
\end{case}

In this case, $v_\ell$ has a neighbor $v_s$ other than $v_{\ell-1}$,
where $s\in [-k,\ell-2]$.

\begin{subcase}
$s\in [-k,-2]$.
\end{subcase}

In this case $s+1\in [-k+1,-1]$. Consider the graph
$D'=D_{\widehat{s+1}}$. Let $V(D')=[v_{s+1},v_t]$, where $t>0$. By
the induction hypothesis, there exists a Hamilton $(v_0,v_t)$-path
$P'$ of $D'$. Then the path
$P=P'Q[v_t,v_{\ell}]v_{\ell}v_sQ[v_s,v_{-k}]$ is a Hamilton
$(v_0,v_{-k})$-path of $D$.

\begin{subcase}
$s=-1$.
\end{subcase}

In this case, the path
$P=Q[v_0,v_{\ell}]v_{\ell}v_{-1}Q[v_{-1},v_{-k}]$ is a Hamilton
$(v_0,v_{-k})$-path of $D$.

\begin{subcase}
$s\in [0,l-2]$.
\end{subcase}

In this case $s+1\in [1,\ell-1]$. Consider the graph
$D'=D_{\widehat{s+1}}$. Let $V(D')=[v_t,v_{s+1}]$, where $t<0$ and
$d_{D'}(v_{s+1})=2$. By the induction hypothesis, there exists a
Hamilton $(v_0,v_t)$-path $P'$ of $D'$, and the edge $v_sv_{s+1}$ is
in $E(P')$ by the fact $d_{D'}(v_{s+1})=2$. Thus the path
$P=P'-v_sv_{s+1}\cup Q[v_{s+1},v_l]v_lv_s\cup Q[v_t,v_{-k}]$ is a
Hamilton $(v_0,v_{-k})$-path of $G$.

So the proof of $(i)$ is complete.

Now we prove $(ii)$. We distinguish the following three cases.

\setcounter{case}{0}
\begin{case}
$s\in [-k+1,0]$.
\end{case}

In this case, $s-1\in [-k,-1]$. Consider the graph
$D'=D_{\widehat{s-1}}$. Let $V(D')=[v_{s-1},v_t]$, where $t>0$ and
$d_{D'}(v_{s-1})=2$. By $(i)$, there exists a Hamilton
$(v_0,v_t)$-path $P'$ of $D'$ and $v_{s-1}v_s\in E(P')$. Thus
$R'=P'-v_{s-1}v_s$ is a spanning $(v_0v_t,v_sv_{s-1})$-pair of $D'$,
and $R=R'\cup Q[v_t,v_l]\cup Q[v_{s-1},v_{-k}]$ is a spanning
$(v_0v_{\ell},v_sv_{-k})$-pair of $D$.

\begin{case}
$s=1$.
\end{case}

In this case, $R=Q[v_0,v_{-k}]\cup Q[v_1,v_{\ell}]$ is a spanning
$(v_0v_{\ell},v_1v_{-k})$-pair of $D$.

\begin{case}
$s\in [2,\ell]$.
\end{case}

In this case, $s-1\in [1,l-1]$. Consider the graph
$D'=D_{\widehat{s-1}}$. Let $V(D')=[v_t,v_{s-1}]$, where $t<0$. By
$(i)$, there exists a Hamilton $(v_0,v_t)$-path $P'$ of $G'$. Thus
$P_1=P'Q[v_t,v_{-k}]$ and $P_2=Q[v_s,v_{\ell}]$ form a spanning
$(v_0v_{\ell},v_sv_{-k})$-pair of $D$.

The proof is complete.
\end{proof}

\medskip

Let $G$ be a graph on $n$ vertices and $k\geq 3$ an integer. A
sequence of vertices $C=v_1v_2\cdots v_kv_1$ such that for all $i\in
[1,k]$ either $v_iv_{i+1}\in E(G)$ or $d(v_i)+d(v_{i+1})\geq n$
(indices modulo $k$) is called an \emph{Ore-cycle} {or briefly, \em
$o$-cycle} of $G$. The {\em deficit} of an $o$-cycle $C$ is the
integer $\de(C)=|\{i\in[1,k]: v_iv_{i+1}\notin E(G)\}|$. Thus, a
cycle is an $o$-cycle of deficit 0. Similarly we define an {\em
$o$-path} of $G$.

Now, we prove the following lemma on $o$-cycles.

\begin{lemma}
\label{lemma-$o$-cycle}
Let $G$ be a graph and let $C'$ be an $o$-cycle in $G$. Then there is
a cycle $C$ in $G$ such that $V(C')\subset V(C)$.
\end{lemma}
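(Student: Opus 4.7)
The plan is to prove the lemma by induction on the deficit $\de(C')$. The base case $\de(C')=0$ is immediate, since then $C'$ is already an ordinary cycle of $G$.

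For the inductive step, suppose $\de(C')=d\ge 1$ and pick a pair of consecutive vertices on $C'$ at which the Ore condition is active: an edge $uv$ missing in $G$ but with $d_G(u)+d_G(v)\ge n$. In the auxiliary graph $G'=G+uv$ the same cyclic sequence $C'$ is still an $o$-cycle, but now of deficit $d-1$ (the remaining Ore conditions survive since degrees only grow when we add an edge). By the induction hypothesis applied inside $G'$, there is an ordinary cycle $C^*$ of $G'$ with $V(C')\subseteq V(C^*)$. If $uv\notin E(C^*)$, then $C^*$ itself is a cycle of $G$ containing $V(C')$. Otherwise, write $P=C^*-uv=w_0w_1\cdots w_k$ with $w_0=u$, $w_k=v$ and $k=|V(C^*)|-1$, so that $P$ is a $(u,v)$-path of $G$ with $V(P)=V(C^*)\supseteq V(C')$.

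It remains to close $P$ into a cycle of $G$ by a standard Ore rotation. Let $A=\{i:1\le i\le k-1,\ uw_i\in E(G)\}$ and $B=\{i:2\le i\le k,\ vw_{i-1}\in E(G)\}$. If some $i_0\in A\cap B$ exists, then
\[
u\,w_{i_0}\,w_{i_0+1}\cdots w_{k-1}\,v\,w_{i_0-1}\,w_{i_0-2}\cdots w_1\,u
\]
is a cycle of $G$ with vertex set $V(P)\supseteq V(C')$. If instead $A\cap B=\emptyset$, then $A,B\subseteq\{1,\ldots,k\}$ are disjoint, hence $|A|+|B|\le k$; since $u\not\sim_G v$ we have $|A|=|N_G(u)\cap V(P)|$ and $|B|=|N_G(v)\cap V(P)|$, so writing $W=V(G)\setminus V(P)$ and using $n=(k+1)+|W|$ together with $d_G(u)+d_G(v)\ge n$ yields
\[
|N_G(u)\cap W|+|N_G(v)\cap W|\ \ge\ d_G(u)+d_G(v)-k\ \ge\ |W|+1.
\]
Pigeonhole then produces $w\in W$ with $w\in N_G(u)\cap N_G(v)$, and $u\,w_1\,w_2\cdots w_{k-1}\,v\,w\,u$ is a cycle of $G$ covering $V(P)\cup\{w\}\supseteq V(C')$, finishing the inductive step.

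The only subtle point, compared with the classical Ore / Bondy--Chv\'{a}tal setting, is that the path $P$ need not be Hamilton (because $V(C^*)$ may be a proper subset of $V(G)$), so the usual count ``$|A|+|B|\ge n>k$'' forcing $A\cap B\ne\emptyset$ can fail. This is exactly the place where I expect some care to be needed, and it is handled by the second subcase above: whenever the rotation fails, the deficit in the count is automatically converted, via pigeonhole, into a common neighbour of $u$ and $v$ outside $V(P)$, which is then attached to $P$ to elongate the cycle by one vertex instead of preserving its length.
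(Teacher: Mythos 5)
Your proof is correct and is, at its core, the same argument the paper uses, just packaged differently. The paper picks an $o$-cycle $C_1$ containing $V(C')$ of \emph{minimum} deficit and derives a contradiction when $\de(C_1)\geq 1$; you run the equivalent induction on deficit and, instead of working directly with the $o$-path, first pass to the auxiliary graph $G+uv$ (a Bondy--Chv\'atal-style closure step) to get an honest cycle before repairing. Both proofs then hinge on exactly the same dichotomy at the missing edge $uv$: either $u$ and $v$ have a common neighbour off the current vertex set (attach it and drop the deficit), or the degree count forces an Ore rotation inside the path. The paper phrases this contrapositively --- it first assumes no common neighbour outside, which forces $d_P(v_1)+d_P(v_k)\geq|V(P)|$ and hence a rotation --- whereas you first try the rotation and convert its failure into the common neighbour via pigeonhole. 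These are logically the same two moves in opposite order. One small expository advantage of the paper's formulation is that, because it stays entirely in the world of $o$-cycles and never introduces a phantom edge, there is no need for the extra subcase ``$uv\notin E(C^*)$'' or the observation that the auxiliary cycle might not pass through the added edge; but your version is also perfectly valid and arguably clarifies why the argument is a relative of the classical Ore/Bondy--Chv\'atal closure.
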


\begin{proof}
Let $C_1$ be an $o$-cycle in $G$ such that $V(C')\subset V(C_1)$ and
$\de(C_1)$ is smallest possible, and suppose, to the contrary,
that $\de(C_1)\geq 1$. Without loss of generality suppose that
$C_1=v_1v_2\ldots v_kv_1$, where $v_1v_k\notin E(G)$ and
$d(v_1)+d(v_k)\geq n$. We use $P$ to denote the $o$-path
$P=v_1v_2\cdots v_k$.

If $v_1$ and $v_k$ have a common neighbor $x\in V(G)\setminus V(P)$,
then $C_2=v_1Pv_kxv_1$ is an $o$-cycle in $G$ with
$V(C')\subset V(C_2)$ and $\de(C_2)<\de(C_1)$, a contradiction.
Hence $N_{G-P}(v_1)\cap N_{G-P}(v_k)=\emptyset$. Then we have
$d_P(v_1)+d_P(v_k)\geq |V(P)|$ since $d(v_1)+d(v_k)\geq n$.
Thus, there exists $i\in[2,k-1]$ such that $v_i\in N_P(v_1)$ and
$v_{i-1}\in N_P(v_k)$, and then again
$C_2=v_1P[v_1,v_{i-1}]v_{i-1}v_kP[v_k,v_i]v_iv_1$ is an $o$-cycle
with $V(C')\subset V(C_2)$ and $\de(C_2)<\de(C_1)$, a contradiction.
\end{proof}

\medskip

Note that Lemma~\ref{lemma-$o$-cycle} immediately implies that if
$P$ is an $(x,y)$-path or an $o$-path in $G$ with $|V(P)|$ larger than
the length of a longest cycle in $G$, then $xy\notin E(G)$ and
$d(x)+d(y)<n$.

In the following, we denote
$\overline{E}(G)=\{uv: uv\in E(G)$ or $d(u)+d(v)\geq n\}$.

\medskip

Let $C$ be a cycle in $G$, $x,x_1,x_2\in V(C)$ three distinct vertices,
and set $X=V(Q)$, where $Q$ is the $(x_1,x_2)$-path on $C$ containing $x$.
We say that the pair of vertices $(x_1,x_2)$ is {\em $x$-good on $C$},
if for some $j\in\{1,2\}$ there is a vertex
$x'\in X\setminus\{x_j\}$ such that
\begin{mathitem}
\item[(1)] there is an $(x,x_{3-j})$-path $P$ such that
     $V(P)=X\setminus\{x_j\}$,
\item[(2)] there is an $(xx_{3-j},x'x_j)$-pair $D$ such
     that $V(D)=X$,
\item[(3)] $d(x_j)+d(x')\geq n$.
\end{mathitem}

\begin{lemma}
Let $G$ be a graph, and $C$ be a cycle of $G$ with a given
orientation. Let $x,y\in V(C)$ and let $R$ be an $(x,y)$-path in $G$
which is internally disjoint with $C$. If there are vertices
$x_1,x_2,y_1,y_2\in V(C)\setminus\{x,y\}$ such that
\begin{mathitem}
\item $x_2,x,x_1,y_1,y,y_2$ appear in this order along $\overrightarrow{C}$
     (possibly $x_1=y_1$ or $x_2=y_2$),
\item $(x_1,x_2)$ is $x$-good on $C$,
\item $(y_1,y_2)$ is $y$-good {on} $C$,
\end{mathitem}
then there is a cycle $C'$ in $G$ such that
$V(C)\cup V(R)\subset V(C')$.
\end{lemma}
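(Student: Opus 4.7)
The plan is to build an $o$-cycle $\widetilde{C}$ in $G$ with $V(C)\cup V(R)\subseteq V(\widetilde{C})$ and then apply Lemma~\ref{lemma-$o$-cycle} to obtain a real cycle $C'$ satisfying $V(C)\cup V(R)\subseteq V(\widetilde{C})\subseteq V(C')$. To set up notation I would let $A=\overrightarrow{C}[x_1,y_1]$ and $B=\overrightarrow{C}[y_2,x_2]$ be the two arcs of $C$ disjoint from $\{x,y\}$, and let $X,Y$ denote the arcs of $C$ containing $x$ and $y$ respectively. The $x$-good hypothesis then supplies $j_x\in\{1,2\}$, a vertex $x'\in X\setminus\{x_{j_x}\}$, a Hamilton $(x,x_{3-j_x})$-path $P_x$ of $G[X\setminus\{x_{j_x}\}]$, a spanning $(xx_{3-j_x},x'x_{j_x})$-pair $D_x$ of $G[X]$, and the Ore inequality $d(x_{j_x})+d(x')\ge n$; analogous data $(j_y,y',P_y,D_y)$ comes from the $y$-good hypothesis.

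The key technical observation is that joining the two components of the spanning pair $D_x$ through the $o$-edge $x'x_{j_x}$ produces an $o$-Hamilton $(x,x_{3-j_x})$-path $H_x$ of $G[X]$ that uses the single $o$-edge $x'x_{j_x}$. This is immediate in either of the two possible endpoint matchings of $D_x$: if its components are the $(x,x')$-path and the $(x_{3-j_x},x_{j_x})$-path, concatenation gives $H_x$ directly; and symmetrically if the components are $(x,x_{j_x})$- and $(x_{3-j_x},x')$-paths. An $o$-Hamilton $(y,y_{3-j_y})$-path $H_y$ of $G[Y]$, containing the single $o$-edge $y'y_{j_y}$, is obtained in exactly the same way.

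I would then construct $\widetilde{C}$ by case analysis on $(j_x,j_y)\in\{1,2\}^2$, assembling the ingredients $H_x$, $H_y$, the arcs $A,B$, and the chord $R$ into one $o$-cycle. The $o$-edges hidden inside $H_x$ and $H_y$ are used as pivots: in the aligned cases $(j_x,j_y)\in\{(1,1),(2,2)\}$ the vertices $x_{j_x}$ and $y_{j_y}$ are both endpoints of a common arc, and this arc is spliced across the two $o$-edge pivots of $H_x,H_y$, while the other arc closes the cycle together with $R$; in the misaligned cases $(j_x,j_y)\in\{(1,2),(2,1)\}$ the two $o$-edges land on different arcs and a slightly different splicing allows both arcs to be visited. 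In each subcase one checks directly that $\widetilde{C}$ traverses every vertex of $V(C)\cup V(R)$.

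The main obstacle will be to verify, in each subcase, that the assembled $o$-cycle covers the interior of \emph{both} arcs $A$ and $B$: the straightforward assembly $x\cdot H_x\cdot x_{3-j_x}\cdot(\text{connecting arc})\cdot y_{3-j_y}\cdot H_y^{-1}\cdot y\cdot R\cdot x$ visits the interior of only one of the two arcs, and overcoming this requires the careful splicing described above, exploiting the freedom provided by the two possible endpoint matchings of the spanning pairs $D_x,D_y$. Once $\widetilde{C}$ has been correctly assembled, Lemma~\ref{lemma-$o$-cycle} immediately delivers the desired cycle $C'$ with $V(C)\cup V(R)\subseteq V(C')$.
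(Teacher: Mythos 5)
Your plan is to build a single $o$-cycle $\widetilde C$ covering $V(C)\cup V(R)$ using exactly the two $o$-edges $x'x_{j_x}$ and $y'y_{j_y}$ supplied by the goodness conditions, then invoke Lemma~2. This does not work, and the step you flag as the ``main obstacle'' is in fact a genuine dead end rather than a matter of careful bookkeeping. Concretely, take the aligned case $j_x=j_y=2$ and suppose the hypothesis hands you a pair $D_x$ consisting of an $(x,x')$-path and an $(x_1,x_2)$-path, and a pair $D_y$ consisting of a $(y,y')$-path and a $(y_1,y_2)$-path. (You speak of ``exploiting the freedom provided by the two possible endpoint matchings of $D_x,D_y$,'' but there is no such freedom: the matching type is whatever the $x$-good/$y$-good hypothesis delivers, not something you get to choose.) The seven real segments---the two $D_x$-components, the two $D_y$-components, the arcs $A=\overrightarrow C[x_1,y_1]$ and $B=\overleftarrow C[x_2,y_2]$, and the chord $R$---are forced to be chained at all six shared endpoints $x,x_1,x_2,y,y_1,y_2$ (otherwise some interior vertex is either missed or visited twice), and in this configuration they chain into a $4$-cycle $x_1\,x_2\,y_2\,y_1$ plus a separate path $x'\,x\,y\,y'$. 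Adding your two $o$-edges $x'x_2$ and $y'y_2$ makes $x_2$ and $y_2$ have degree $3$ in the resulting segment multigraph, so any spanning $o$-cycle must drop one segment at each of them; but every real segment carries interior vertices that cannot be reached otherwise, so the only droppable segments are the $o$-edges themselves, which returns you to the disconnected $4$-cycle-plus-path. The same obstruction appears when both $D_x$ and $D_y$ have the other matching type. Thus, in general, no $o$-cycle through $V(C)\cup V(R)$ uses only the raw $o$-edges $x'x_{j_x}$ and $y'y_{j_y}$, and your ``careful splicing'' has nothing to splice with.

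The missing idea is to cross the two Ore inequalities. From $d(x_{j_x})+d(x')\ge n$ and $d(y_{j_y})+d(y')\ge n$ one gets $d(x_{j_x})+d(y')+d(x')+d(y_{j_y})\ge 2n$, so at least one of the two crossed sums $d(x_{j_x})+d(y')$, $d(x')+d(y_{j_y})$ is at least $n$; that crossed pair is the $o$-edge you actually need. This is exactly what the paper's proof is organized around, in contrapositive form: assuming no cycle through $V(C)\cup V(R)$ exists, it builds \emph{two} Hamilton paths of $G[V(C)\cup V(R)]$---one of the form $Q_2\cup D_2\cup R\cup P_1\cup Q_1$ from $x_{j_x}$ to $y'$ (using the $(x,x_{3-j_x})$-path $P_1$ on the $x$-side and the disjoint pair $D_2$ on the $y$-side), the other of the form $Q_2\cup D_1\cup R\cup P_2\cup Q_1$ from $x'$ to $y_{j_y}$ (using $D_1$ on the $x$-side and $P_2$ on the $y$-side)---and then uses the remark after Lemma~2 to conclude $d(x_{j_x})+d(y')<n$ and $d(x')+d(y_{j_y})<n$, contradicting the sum above. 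Your observation that each spanning pair can be stitched into a single $o$-Hamilton path $H_x$ (resp.\ $H_y$) is correct but is a red herring: $H_x$ and $H_y$ use up both goodness $o$-edges ``internally,'' leaving nothing with which to thread both arcs $A$ and $B$. To repair your argument you would need to replace them by the crossed $o$-edge and build the corresponding mixed Hamilton path (e.g.\ $P_1$ on the $x$-side and $D_y$ on the $y$-side), at which point you are essentially reproducing the paper's construction.
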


\begin{proof}
Assume the opposite. Let $P_1$ and $D_1$ be the path and disjoint
path pair associated with $x$, and $P_2$ and $D_2$ associated with
$y$; and let $Q_1=\overrightarrow{C}[x_1,y_1]$ and
$Q_2=\overleftarrow{C}[x_2,y_2]$.

By the definition of an $x$-good pair, without loss of generality,
we can assume that $P_1$ is an $(x,x_1)$-path, $D_1$ is an
$(xx_1,x'x_2)$-pair, and $d(x_2)+d(x')\geq n$.

\setcounter{case}{0}
\begin{case}
$P_2$ is a $(y,y_1)$-path, $D_2$ is a $(yy_1,y'y_2)$-pair, and
$d(y_2)+d(y')\geq n$.
\end{case}

In this case the path $P=Q_2\cup D_2\cup R\cup P_1\cup Q_1$ is an
$(x_2,y')$-path which contains all the vertices in $V(C)\cup V(R)$,
and $P'=Q_2\cup D_1\cup R\cup P_2\cup Q_1$ is an $(x',y_2)$-path
which contains all the vertices in $V(C)\cup V(R)$. Thus, by Lemma 2,
$d(x_2)+d(y')<n$ and $d(x')+d(y_2)<n$, a contradiction to
$d(x_2)+d(x')\geq n$ and $d(y_2)+d(y')\geq n$.

\begin{case}
$P_2$ is a $(y,y_2)$-path, $D_2$ is a $(yy_2,y'y_1)$-pair, and
$d(y_1)+d(y')\geq n$.
\end{case}

\begin{subcase}
The $(xx_1,x'x_2)$-pair $D_1$ is formed by an $(x,x_2)$-path and an
$(x_1,x')$-path.
\end{subcase}

In this case, the path $P=Q_2\cup P_2\cup R\cup P_1\cup Q_1$ is an
$({x_2,y_1})$-path which contains all the vertices in
$V(C)\cup V(R)$, and the path $P'=D_1\cup Q_1\cup Q_2\cup R\cup D_2$
is an $(x',y')$-path which contains all the vertices in $V(C)\cup
V(R)$. By Lemma 2, $d(x_2)+d(y_1)<n$ and $d(x')+d(y')<n$, a
contradiction.

\begin{subcase}
The $(xx_1,x'x_2)$-pair $D_1$ is formed by an $(x,x')$-path and an
$(x_1,x_2)$-path.
\end{subcase}

\begin{subsubcase}
The $(yy_2,y'y_1)$-pair $D_2$ is formed by an $(y,y_1)$-path and an
$(y_2,y')$-path.
\end{subsubcase}

This case can be proved similarly as in Case 2.1.

\begin{subsubcase}
The $(yy_2,y'y_1)$-pair $D_2$ is formed by an $(y,y')$-path and
an $(y_1,y_2)$-path.
\end{subsubcase}

In this case, the path $P=Q_2\cup D_2\cup R\cup P_1\cup Q_1$ is an
$(x_2,y')$-path containing all vertices in $V(C)\cup V(R)$,
and the path $P'=Q_2\cup D_1\cup R\cup P_2\cup Q_1$ is an
$(x',y_1)$-path containing all vertices in $V(C)\cup V(R)$.
By Lemma 2, $d(x_2)+d(y')<n$ and $d(x')+d(y_1)<n$, a contradiction.

The proof is complete.
\end{proof}

\section{Proof of Theorem 8}

Let $C$ be a longest cycle of $G$ with a given orientation,
set $n=|V(G)|$ and $c=|V(C)|$, and assume
that $G$ is not Hamiltonian, i.e. $c<n$. Then $V(G)\backslash V(C)\neq
\emptyset$. Since $G$ is 2-connected, there exists a
$(u_0,v_0)$-path with length at least 2 which is internally disjoint
from $C$, where $u_0,v_0\in V(C)$. Let $R=z_0z_1z_2\cdots z_{r+1}$,
where $z_0=u_0$ and $z_{r+1}=v_0$, be such a path,
and choose $R$ as short as possible.
Let $r_1$ and $r_2$ denote the number of interior vertices in
$\overrightarrow{C}[u_0,v_0]$ and $\overrightarrow{C}[v_0,u_0]$,
respectively (note that clearly $r_1+r_2+2=c$). We denote the
vertices of $C$ by $\overrightarrow{C}=u_0u_1u_2\cdots
u_{r_1}v_0u_{-r_2}u_{-r_2+1}\cdots u_{-1}u_0$ or
$\overleftarrow{C}=v_0v_1v_2\cdots
v_{r_1}u_0v_{-r_2}v_{-r_2+1}\cdots v_{-1}v_0$, where
$u_{\ell}=v_{r_1+1-\ell}$ and $u_{-k}=v_{-r_2-1+k}$
{(see Fig. 5)}. Let $H$ be the component of $G-C$
which contains the vertices in $[z_1,z_r]$.

\begin{center}
\begin{picture}(200,200)

\thicklines

\put(100,100){\circle{160}} \put(20,100){\line(1,0){160}}
\multiput(20,100)(20,0){4}{\put(0,0){\circle*{4}}}
\multiput(120,100)(20,0){4}{\put(0,0){\circle*{4}}}

\put(100,100){\put(77.3,20.7){\circle*{4}}
\put(69.3,40){\circle*{4}} \put(56.6,56.6){\circle*{4}}
\put(-77.3,20.7){\circle*{4}} \put(-69.3,40){\circle*{4}}
\put(-56.6,56.6){\circle*{4}} \put(77.3,-20.7){\circle*{4}}
\put(69.3,-40){\circle*{4}} \put(56.6,-56.6){\circle*{4}}
\put(-77.3,-20.7){\circle*{4}} \put(-69.3,-40){\circle*{4}}
\put(-56.6,-56.6){\circle*{4}} \put(82.3,20.7){$u_1$}
\put(74.3,40){$u_2$} \put(61.5,56.6){$u_3$}
\put(-96.3,40){$u_{r_1-1}$} \put(-94.3,20.7){$u_{r_1}$}
\put(82.3,-20.7){$u_{-1}$} \put(74.3,-40){$u_{-2}$}
\put(61.5,-56.6){$u_{-3}$} \put(-101.3,-40){$u_{-r_2+1}$}
\put(-99.3,-20.7){$u_{-r_2}$}

\put(-72.3,20.7){$v_1$} \put(-64.3,40){$v_2$}
\put(-51.6,56.6){$v_3$} \put(41.3,40){$v_{r_1-1}$}
\put(59.3,20.7){$v_{r_1}$} \put(-72.3,-20.7){$v_{-1}$}
\put(-64.3,-40){$v_{-2}$} \put(-51.6,-56.6){$v_{-3}$}
\put(36.3,-40){$v_{-r_2+1}$} \put(54.3,-20.7){$v_{-r_2}$}}

\put(8,100){$v_0$} \put(20,105){$z_{r+1}$} \put(40,105){$z_r$}
\put(135,105){$z_2$} \put(155,105){$z_1$} \put(170,105){$z_0$}
\put(185,100){$u_0$}
\end{picture}

\small Fig. 5. $C\cup R$, the subgraph of $G$.
\end{center}

\begin{claim}
Let $x\in V(H)$ and $y\in \{u_1,u_{-1},v_1,v_{-1}\}$. Then $xy\notin
\overline{E}(G)$.
\end{claim}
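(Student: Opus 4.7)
\emph{Proof proposal.} My plan is to argue by contradiction against the maximality of $C$, and to subsume the two ways of having $xy\in\overline{E}(G)$ into a single construction by means of Lemma~\ref{lemma-$o$-cycle}.

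First I would exploit the symmetries of the setup to reduce the four choices of $y$ to one. Reversing the orientation of $C$ interchanges $u_1\leftrightarrow u_{-1}$ and $v_1\leftrightarrow v_{-1}$, while reversing the path $R$ interchanges the roles of $u_0,v_0$ and hence also the pair $\{u_1,u_{-1}\}$ with the pair $\{v_1,v_{-1}\}$. So I may assume for contradiction that $x\in V(H)$ and $xu_1\in\overline{E}(G)$.

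Since $r\ge 1$ and both $x$ and $z_1$ lie in the connected subgraph $H$, I can choose an $(x,z_1)$-path $P=x\,w_1\,w_2\cdots w_{k-1}\,z_1$ entirely inside $H$ (with the convention $k=0$ meaning $x=z_1$). I then form the closed vertex sequence
\[
C^{*}\;=\;u_0\,z_1\,w_{k-1}\cdots w_1\,x\,u_1\,u_2\cdots u_{r_1}\,v_0\,u_{-r_2}\cdots u_{-1}\,u_0,
\]
obtained from $C$ by replacing the edge $u_0u_1$ with the path $u_0\,z_1\,w_{k-1}\cdots w_1\,x$ followed by the pair $xu_1$. Because $V(P)\subseteq V(H)\subseteq V(G)\setminus V(C)$, the vertices listed in $C^{*}$ are pairwise distinct and $V(C^{*})\supseteq V(C)\cup\{x\}$.

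Every consecutive pair of vertices along $C^{*}$ is an edge of $G$ (coming from $R$, $P$, or $C$) with the single possible exception of the pair $xu_1$, which by hypothesis is either an edge of $G$ or else satisfies $d(x)+d(u_1)\ge n$. Hence $C^{*}$ is an $o$-cycle in $G$. Applying Lemma~\ref{lemma-$o$-cycle} to $C^{*}$ then yields a cycle $C'$ of $G$ with $V(C^{*})\subset V(C')$; in particular $|V(C')|\ge|V(C)|+1>c$, contradicting the choice of $C$ as a longest cycle of $G$. I do not anticipate a real obstacle: the only point to verify is that the spliced-in path is internally disjoint from $V(C)$, which is immediate from the fact that $H$ is a component of $G-C$, and the construction specialises correctly in the degenerate case $x=z_1$.
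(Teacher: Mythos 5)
Your proof is correct and takes essentially the same route as the paper: the paper also reduces to $y=u_1$ by symmetry, takes an $(x,z_1)$-path $P'$ in $H$, forms the long $(x,u_1)$-path $P'z_1u_0\overleftarrow{C}[u_0,u_1]$ covering $V(C)\cup V(P')$, and concludes $xu_1\notin\overline{E}(G)$ via (the remark following) Lemma~2. Your explicit formation of the $o$-cycle $C^{*}$ and direct appeal to Lemma~2 is just that same argument with the final step unpacked.
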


\begin{proof}
Without {loss} of generality, we
{assume} $y=u_1$. Let $P'$ be an $(x,z_1)$-path in
$H$. Then $P=P'z_1u_0\overleftarrow{C}[u_0,u_1]$ is an $(x,y)$-path
which contains all the vertices in $V(C)\cup V(P')$. By Lemma 2, we
have $xy\notin \overline{E}(G)$.
\end{proof}

\begin{claim}
$u_1u_{-1}\in \overline{E}(G)$, $v_1v_{-1}\in \overline{E}(G)$.
\end{claim}

\begin{proof}
If $u_1u_{-1}\notin E(G)$, by Claim 1, the graph induced by
$\{u_0,z_1,u_1,u_{-1}\}$ is a claw, where $d(z_1)+d(u_{\pm 1})<n$.
Since $G$ is a claw-heavy graph, we have that $d(u_1)+d(u_{-1})\geq
n$.

The second assertion can be proved similarly.
\end{proof}

\begin{claim}
$u_1v_{-1}\notin \overline{E}(G)$, $u_{-1}v_1\notin
\overline{E}(G)$, $u_0v_{\pm 1}\notin \overline{E}(G),u_{\pm
1}v_0\notin \overline{E}(G)$.
\end{claim}

\begin{proof}
Since $P=\overrightarrow{C}[u_1,v_0]R\overleftarrow{C}[u_0,v_{-1}]$
is a $(u_1,v_{-1})$-path which contains all the vertices in
$V(C)\cup V(R)$, we have $u_1v_{-1}\notin \overline{E}(G)$ by Lemma
2.

If $u_0v_1\in \overline{E}(G)$, then
$Pc=\overrightarrow{C}[u_1,v_1]v_1u_0R\overrightarrow{C}[v_0,u_{-1}]u_{-1}u_1$
is an $o$-cycle which contains all the vertices of $V(C)\cup V(R)$.
By Lemma 2, there exists a cycle which contains all the vertices in
$V(C)\cup V(R)$, a contradiction.

The other {assertions} can be proved similarly.
\end{proof}

\begin{claim}
Either $u_1u_{-1}\in E(G)$ or $v_1v_{-1}\in E(G)$.
\end{claim}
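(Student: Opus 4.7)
The plan is to argue by contradiction using Lemma 3. Suppose for the sake of contradiction that $u_1u_{-1}\notin E(G)$ and $v_1v_{-1}\notin E(G)$. Claim 2, which places both of these pairs in $\overline{E}(G)$, then upgrades the non-edges into the Ore-type inequalities $d(u_1)+d(u_{-1})\ge n$ and $d(v_1)+d(v_{-1})\ge n$. These are exactly the ingredients needed to fit $u_0$ and $v_0$ into the good-pair framework of Lemma 3.

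My next step is to verify that $(u_1,u_{-1})$ is $u_0$-good on $C$. The arc $X$ of $C$ between $u_1$ and $u_{-1}$ that contains $u_0$ is just the three-vertex arc $u_{-1}u_0u_1$, and with the choices $j=2$ and witness $x'=u_1$ the three conditions of the definition reduce to: $(1)$ the single edge $u_0u_1$ serves as the required $(u_0,u_1)$-path with $V(P)=X\setminus\{u_{-1}\}$; $(2)$ the subgraph consisting of the edge $u_0u_{-1}$ together with the trivial single-vertex path at $u_1$ forms the required $(u_0u_1,u_1u_{-1})$-pair with $V(D)=X$; and $(3)$ is exactly $d(u_{-1})+d(u_1)\ge n$. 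A symmetric verification shows that $(v_1,v_{-1})$ is $v_0$-good on $C$.

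Now $R$ is a $(u_0,v_0)$-path internally disjoint from $C$, and the six vertices $u_{-1},u_0,u_1,v_1,v_0,v_{-1}$ appear in this cyclic order along $\overrightarrow{C}$ (Lemma 3 explicitly permits the degenerate coincidences $u_1=v_1$ when $r_1=1$ and $u_{-1}=v_{-1}$ when $r_2=1$). So all hypotheses of Lemma 3 are in place, and its conclusion yields a cycle $C'$ with $V(C)\cup V(R)\subseteq V(C')$. Since the interior of $R$ lies in $H\subseteq G-C$ and is nonempty, $|V(C')|>|V(C)|$, contradicting the choice of $C$ as a longest cycle.

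The only delicate point I anticipate is the single-vertex component used in the path pair witnessing condition $(2)$, which is forced by the fact that $X$ contains only three vertices. Writing this up cleanly requires a literal reading of the definition of an $(\{x_1,x_2\},\{y_1,y_2\})$-path pair when an endpoint lies in both $\{x_1,x_2\}$ and $\{y_1,y_2\}$; once that is granted, the rest of the argument is a direct application of the Ore-cycle machinery already developed in Lemmas 2 and 3.
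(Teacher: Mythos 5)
Your proof is correct, but it takes a different and more roundabout route than the paper's. The paper disposes of Claim~4 in two lines: from Claim~2, $d(u_1)+d(u_{-1})\ge n$ and $d(v_1)+d(v_{-1})\ge n$, while Claim~3 (already established via Lemma~2) gives $d(u_1)+d(v_{-1})<n$ and $d(u_{-1})+d(v_1)<n$; summing the two pairs of inequalities yields the contradiction directly. You instead verify that $(u_1,u_{-1})$ is $u_0$-good and $(v_1,v_{-1})$ is $v_0$-good, and invoke Lemma~3. This is sound: the degenerate three-vertex arc and the single-vertex component of the disjoint path pair that worried you are in fact sanctioned by the definitions, and the paper itself uses exactly this construction later in Claim~9 (``$P=v_0v_1$ is a $(v_0,v_1)$-path and $D=v_0v_{-1}\cup v_1$ is a $(v_0v_1,v_{-1}v_1)$-pair''). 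Tracing Lemma~3 through your degenerate instance, the paths $P$ and $P'$ it builds are exactly the two Hamiltonian-on-$V(C)\cup V(R)$ paths from $u_{-1}$ to $v_1$ and from $u_1$ to $v_{-1}$, so Lemma~2 returns precisely the two ``$<n$'' inequalities that Claim~3 already supplies. In other words, your detour through Lemma~3 rederives Claim~3 rather than reusing it. What it buys you is a check that the good-pair machinery degenerates gracefully to the base case $k=\ell=1$; what it costs is concision, since Claim~3 is sitting right there in the statement immediately preceding.
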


\begin{proof}
Assume the opposite. By Claim 2 we have $d(u_1)+d(u_{-1})\geq n$ and
$d(v_1)+d(v_{-1})\geq n$. By Claim 3, we have $d(u_1)+d(v_{-1})<n$
and $d(u_{-1})+d(v_1)<n$, a contradiction.
\end{proof}

Now, we distinguish two cases.

\setcounter{case}{0}
\begin{case}
$r\geq 2$, or $r=1$ and $u_0v_0\notin E(G)$.
\end{case}

By Claim 4, without {loss} of generality, we assume that
$u_1u_{-1}\in E(G)$. Thus $G[u_{-1},u_1]$ is
$(u_{-1},u_0,u_1)$-composed.

\begin{claim}
$z_2u_0\notin \overline{E}(G)$.
\end{claim}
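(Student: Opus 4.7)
The plan is to argue by contradiction: assume $z_2u_0\in\overline{E}(G)$ and split into two subcases according to whether $z_2u_0$ is actually an edge or only satisfies the degree-sum condition.

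\textbf{Edge subcase.} Suppose $z_2u_0\in E(G)$. If $r\ge 2$, then $R'=u_0z_2z_3\cdots z_{r+1}$ is a $(u_0,v_0)$-path internally disjoint from $C$ of length $r<r+1=|R|$, contradicting the minimality of $R$. If $r=1$, then $z_2=z_{r+1}=v_0$, so the assumption reads $v_0u_0\in E(G)$, directly contradicting the Case~1 hypothesis $u_0v_0\notin E(G)$.

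\textbf{Degree-sum subcase.} Suppose $z_2u_0\notin E(G)$ but $d(z_2)+d(u_0)\ge n$. I will construct a $(u_0,z_2)$-$o$-path $P$ in $G$ with $|V(P)|>c$; then by the note following Lemma~\ref{lemma-$o$-cycle}, we get $z_2u_0\notin\overline{E}(G)$, the desired contradiction. The natural candidate exploits the Case~1 shortcut $u_1u_{-1}\in E(G)$:
$$P\;=\;u_0\,u_{-1}\,u_1\,u_2\cdots u_{r_1}\,v_0\,z_r\,z_{r-1}\cdots z_2,$$
which uses the edge $u_1u_{-1}$ to bypass $u_0$ on $C$, visits all of the $u_\ell$-side of $C$ together with $v_0$, and then walks back along $R$ to $z_2$; it contains $r_1+r+2$ vertices. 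Closing $P$ by the $o$-edge $u_0z_2$ produces an $o$-cycle $C^*$; if $|V(C^*)|>c$, then Lemma~\ref{lemma-$o$-cycle} delivers a cycle of $G$ strictly longer than $C$, contradicting the maximality of $C$.

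\textbf{Main obstacle.} The delicate point is ensuring $|V(C^*)|>c$. Because the real cycle $u_0u_{-1}u_1u_2\cdots u_{r_1}v_0z_r\cdots z_1u_0$ of $G$ already has $r+r_1+3$ vertices, the maximality of $C$ forces $r\le r_2-1$; an analogous argument using the $o$-edge $v_1v_{-1}$ from Claim~2 (together with Lemma~\ref{lemma-$o$-cycle} applied to the resulting $o$-cycle) yields $r\le r_1-1$. Hence the straightforward count $r_1+r+2$ is bounded above by $c$ and the naive $o$-cycle falls just short. The argument must therefore be refined: either by including $z_1$ into the $o$-cycle via the edges $z_1u_0,z_1z_2\in E(R)$, or by invoking both shortcuts $u_1u_{-1}$ and $v_1v_{-1}$ simultaneously to bypass both $u_0$ and $v_0$ on $C$ together with the $o$-edge $z_2u_0$. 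Once this refinement yields an $o$-cycle of more than $c$ vertices, Lemma~\ref{lemma-$o$-cycle} closes out the contradiction and the claim follows.
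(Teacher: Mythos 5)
Your edge subcase is correct and matches the paper: by the minimality of $R$ we get $z_2 u_0 \notin E(G)$. The degree-sum subcase, however, has a genuine gap that you yourself flag but do not close, and in fact cannot close by the route you propose.

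You correctly compute that the natural $o$-cycle through $u_{-1}$, $\overrightarrow{C}[u_1,v_0]$, and $R[v_0,z_2]$, closed by the hypothetical $o$-edge $z_2u_0$, has only $r_1+r+2\leq c-1$ vertices (your bound $r\leq r_2-1$ is right). You then say the argument ``must be refined'' by including $z_1$ or by using both shortcuts $u_1u_{-1}$ and $v_1v_{-1}$, and assert that ``once this refinement yields an $o$-cycle of more than $c$ vertices'' the claim follows. But neither refinement produces such an $o$-cycle. If $z_1$ is included, then since the only known $o$-neighbors of $z_1$ in $V(C)\cup V(R)$ are $u_0$ and $z_2$ (Claim~1 excludes $u_{\pm1},v_{\pm1}$, and the minimality of $R$ excludes other $C$-vertices), the $o$-cycle must contain both $u_0z_1$ and $z_1z_2$; combined with the $o$-edge $z_2u_0$ this forces the $o$-cycle to be the triangle $u_0z_1z_2$, which is too short. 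If $z_1$ is excluded and both shortcuts are used, one needs a spanning $(v_0,u_0)$-$o$-path of $C$, but $u_0$'s only available $o$-neighbors on $C$ are $u_1$ and $u_{-1}$, and every attempt at routing through $u_1u_{-1}$ and $v_1v_{-1}$ strands the walk at $u_{\pm2}$ or $v_{\pm2}$ with all $o$-neighbors already consumed. So there is no $(u_0,z_2)$-$o$-path on $V(C)\cup V(R)$ of more than $c$ vertices available from the known adjacencies, and Lemma~\ref{lemma-$o$-cycle} cannot be invoked.

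The paper's proof of this claim does not attempt an $o$-cycle construction at all. It is a direct degree-counting argument. Subclaim~5.1 shows $N(u_0),N(z_2)\subset V(C)\cup V(H)$; Subclaim~5.2 uses claw-heaviness to show the neighbors $y_1,\dots,y_k$ of $u_0$ in $H$ are pairwise $o$-adjacent, yielding the $o$-path $Q=z_2y_1\cdots y_ku_0$; Subclaims~5.3 and~5.4 then show that each $C$-neighbor $v_{\ell'}$ of $z_2$ on a side is followed by $k+1$ consecutive non-neighbors of $u_0$, which bounds $d_C(z_2)+d_C(u_0)\leq c-2k$ and hence $d(z_2)+d(u_0)\leq (c-2k)+(h+k)=c+h-k<n$. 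Your approach is missing both this structural use of the component $H$ and the heavy-subgraph machinery it requires, so the claim is not established.
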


\begin{proof}
By the choice of the path $R$, we have $z_2u_0\notin E(G)$. Now we
prove that $d(z_2)+d(u_0)<n$.

\begin{subclaim}
Every neighbor of $u_0$ is in $V(C)\cup V(H)$; every neighbor of
$z_2$ is in $V(C)\cup V(H)$.
\end{subclaim}

\begin{proof}
Assume the opposite. Let $z'\in V(H')$ be a neighbor of $u_0$ where
$H'$ is a component of $G-C$ other than $H$. Then we have
$z'z_1\notin E(G)$ and $N_{G-C}(z')\cap N_{G-C}(z_1)=\emptyset$.

By Claim 1, we have $u_1z_1\notin \overline{E}(G)$, and similarly
$u_1z'\notin \overline{E}(G)$. Thus the graph induced by
$\{u_0,u_1,z_1,z'\}$ is a claw, where $d(u_1)+d(z_1)<n$ and
$d(u_1)+d(z')<n$. Then we have $d(z_1)+d(z')\geq n$.

Since $N_{G-C}(z_1)\cap N_{G-C}(z')=\emptyset$, there
{exist} two vertices $x_1,x_2\in V(C)$ such that
{$x_1x_2\in E(\overrightarrow{C})$} and
$z_1x_1,z'x_2\in E(G)$. Thus
$P=z_1x_1\overleftarrow{C}[x_1,x_2]x_2z'$ is a $(z_1,z')$-path which
contains all the vertices in $V(C)\cup\{z_1,z'\}$. By Lemma 2, there
exists a cycle which contains all the vertices in
$V(C)\cup\{z_1,z'\}$, a contradiction.

If $z_2=v_0$, the second assertion can be proved similarly; and if
$z_2\neq v_0$, {the assertion} is obvious.
\end{proof}

Let $h=|V(H)|$ and $k=|N_H(u_0)|$. Then we have
$d_H(z_2)+d_H(u_0)\leq h+k$. Since $z_1\in N_H(u_0)$, we have $k\geq
1$. Let $N_H(u_0)=\{y_1,y_2,\ldots,y_k\}$, where $y_1=z_1$.

\begin{subclaim}
$y_iy_j\in \overline{E}(G)$ for all $1\leq i<j\leq k$.
\end{subclaim}

\begin{proof}
If $y_iy_j\notin E(G)$, then by Claim 1, the graph induced by
$\{u_0,u_1,y_i,y_j\}$ is a claw, where $d(y_i)+d(u_1)<n$ and
$d(y_j)+d(u_1)<n$. Thus we have $d(y_i)+d(y_j)\geq n$.
\end{proof}

Now, let $Q$ be the $o$-path $Q=z_2y_1y_2\cdots y_ku_0$. It is clear
that $R[z_2,v_0]$ and $Q$ are internally disjoint, and $Q$ contains
at least $k$ vertices in $V(H)$. In the following, we use $C'$ to
denote the cycle $\overrightarrow{C}[u_1,u_{-1}]u_{-1}u_1$ if
$z_2\neq v_0$, and to denote the $o$-cycle
$\overrightarrow{C}[u_1,v_1]v_1v_{-1}\overrightarrow{C}[v_{-1},u_{-1}]u_{-1}u_1$ if $z_2=v_0$.

By Claims 1 and 3, we have $z_2v_{r_1}\notin E(G)$, where
$v_{r_1}=u_1$. Let $v_{\ell}$ be the last vertex in
$\overleftarrow{C}[v_1,u_1]$ such that $z_2v_{\ell}\in E(G)$. If
there are no neighbors of $z_2$ in $\overleftarrow{C}[v_1,u_1]$,
then let $v_{\ell}=v_0$.

\begin{subclaim}
For every vertex $v_{{\ell}'}\in N_{[v_1,v_{r_1}]}(z_2)\cup\{v_0\}$,
$u_0v_{{\ell}'+1}\notin E(G)$.
\end{subclaim}

\begin{proof}
By Claim 3, we have $u_0v_1\notin E(G)$.

If $z_2v_{{\ell}'}\in E(G)$ and $u_0v_{{\ell}'+1}\in E(G)$, then
$C''=\overrightarrow{C'}[v_{{\ell}'},v_{{\ell}'+1}]v_{{\ell}'+1}u_0Qz_2v_{{\ell}'}$
is an $o$-cycle which contains all the vertices in $V(C)\cup V(Q)$,
a contradiction.
\end{proof}

\begin{subclaim}
$r_1-\ell\geq k+1$, and for every vertex
$v_{{\ell}'}\in[v_{{\ell}+1},v_{{\ell}+k}]$, $u_0v_{{\ell}'}\notin
E(G)$.
\end{subclaim}

\begin{proof}
Assume the opposite. Let $v_{{\ell}'}$ be the first vertex in
$[v_{{\ell}+1},v_{r_1}]$ such that $u_0v_{{\ell}'}\in E(G)$, and
${\ell}'-{\ell}<k+1$.

If $v_{\ell}=v_0$, then
$C''=\overrightarrow{C}[v_0,u_{-1}]u_{-1}u_1\overrightarrow{C}[u_1,v_{{\ell}'}]v_{{\ell}'}u_0QR[z_2,v_0]$
is an $o$-cycle which contains all the vertices in
$V(C)\backslash[v_1,v_{{\ell}'-1}]\cup V(Q)$, and $|V(C'')|>c$, a
contradiction.

Thus, we assume that $v_{\ell}\neq v_0$, {and} $z_2v_{\ell}\in
E(G)$. {Then}
$C''=\overrightarrow{C'}[v_{\ell},v_{{\ell}'}]v_{{\ell}'}u_0Qz_2v_{\ell}$
is an $o$-cycle which contains all the vertices in
$V(C)\backslash[v_{\ell+1},v_{{\ell}'-1}]\cup V(Q)$, and
$|V(C'')|>c$, a contradiction.

Thus we have ${\ell}'-\ell\geq k+1$. Note that $u_0v_{r_1}\in E(G)$,
we have $r_1-\ell\geq k+1$.
\end{proof}

Let $d_1=|N_{[v_1,v_{r_1}]}(z_2)\cup\{v_0\}|$,
$d_2=|N_{[v_{-r_2},v_{-1}]}(z_2)\cup\{v_0\}|$,
$d'_1=|N_{[v_1,v_{r_1}]}(u_0)|$ and
$d'_2=|N_{[v_{-r_2},v_{-1}]}(u_0)|$. Then $d_C(z_2)\leq d_1+d_2-1$
and $d_C(u_0)\leq d'_1+d'_2+1$.

By Claims 5.3 and 5.4, we have $d'_1\leq r_1-d_1-k+1$, and
similarly, $d'_2\leq r_2-d_2-k+1$. Thus $d_C(z_2)+d_C(u_0)\leq
r_1+r_2-2k+2=c-2k$. Note that $d_H(z_2)+d_H(u_0)\leq h+k$. By Claim
5.1, we have $d(z_2)+d(u_0)\leq c+h-k<n$.
\end{proof}

Recall that $G[u_{-1},u_1]$ is $(u_{-1},u_0,u_1)$-composed. Now we
prove the following claims.

\begin{claim}
If $G[u_{-k},u_{\ell}]$ is $(u_{-k},u_0,u_{\ell})$-composed with
canonical ordering $u_{-k},u_{-k+1},\ldots,u_{\ell}$, then $k\leq
r_2-2$ and $\ell\leq r_1-2$.
\end{claim}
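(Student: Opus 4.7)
The plan is to argue by contradiction. By the symmetry between the two arcs of $C$ through $v_0$ (swapping $u_i \leftrightarrow u_{-i}$), it suffices to prove $\ell \leq r_1 - 2$; the bound $k \leq r_2 - 2$ then follows by symmetry. So I will suppose for contradiction that $\ell \geq r_1 - 1$, and my aim is to combine the internal structure of the composed graph $G[u_{-k},u_\ell]$ (supplied by Lemma~1) with the arc of $C$ through $v_0$ and the path $R$ to exhibit a cycle in $G$ strictly longer than $C$, contradicting the choice of $C$.

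The main tool is Lemma~1$(i)$, applied to the carrier $D$ of $G[u_{-k},u_\ell]$ with the canonical ordering read backwards, which yields a Hamilton $(u_0, u_{-k})$-path $P$ of $D$. I then splice $P$ with the arc of $C$ from $u_{-k}$ through $u_{-k-1},\ldots, u_{-r_2}, v_0$ and with $R^{-1}$ to form the cycle
\[ C^* \;=\; u_0 \overset{P}{\longrightarrow} u_{-k} \overset{\overrightarrow{C}[u_{-k},\,v_0]}{\longrightarrow} v_0 \overset{R^{-1}}{\longrightarrow} u_0. \]
A direct count gives $|V(C^*)| = (k+\ell+1) + (r_2-k) + 1 + r = \ell + r_2 + 2 + r$. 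Substituting $\ell = r_1$ yields $|V(C^*)| = c + r \geq c + 1$, and substituting $\ell = r_1 - 1$ with $r \geq 2$ yields $|V(C^*)| = c - 1 + r \geq c + 1$; in either case the maximality of $C$ is contradicted.

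The only remaining possibility is $\ell = r_1 - 1$ together with $r = 1$, so that $R = u_0 z_1 v_0$ and, by the hypothesis of Case~1, $u_0 v_0 \notin E(G)$; the construction above then misses exactly the vertex $u_{r_1}$ and produces only $|V(C^*)|=c$. To gain the extra vertex I would apply Lemma~1$(ii)$ with a suitable index $s$ to obtain a spanning $(u_0 u_\ell, u_s u_{-k})$-pair $(P_1,P_2)$ of $D$, and then perform a case analysis on the two possible pairings of this spanning pair together with the two possible extension types producing $u_\ell$ in the canonical sequence (either a $1$-extension, which introduces a chord $u_\ell u_t$ with $t\in[-k,\ell-2]$, or a $2$-extension, which introduces the chord $u_\ell u_{-k}$). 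In each sub-case I construct a cycle of length at least $c+1$ that now incorporates both $u_{r_1}$ and $z_1$, using Claim~1 to control the limited adjacencies of $z_1$ to $C$ and using the chord from the final canonical step to splice $u_{r_1}$ into the cycle. This again contradicts the maximality of $C$ and completes the argument.

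The main obstacle is precisely this tight sub-case $\ell = r_1 - 1, r = 1$: the naive Hamilton-path construction is one vertex short of beating $c$, and the extra vertex has to be extracted by combining the chord produced in the final step of the canonical sequence for $u_\ell$ with the forbidden adjacencies of $z_1$ from Claim~1. The bookkeeping is delicate but routine once the correct coupling of spanning pair and chord is identified.
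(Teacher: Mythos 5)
Your direct construction is fine as far as it goes: wrapping a Hamilton $(u_0,u_{-k})$-path $P$ of the carrier $D$ around through $\overleftarrow{C}[u_{-k},v_0]$ and $R^{-1}$ does give a cycle of length $\ell+r_2+2+r$, which beats $c$ when $\ell=r_1$, and also when $\ell=r_1-1$ and $r\geq 2$. But the residual sub-case $\ell=r_1-1$, $r=1$ (which is allowed in Case~1, where $u_0v_0\notin E(G)$) is a genuine gap, and the paragraph you offer for it is not a proof. The vertex you are missing, $u_{r_1}=v_1$, is connected to the rest of the picture only through $u_{r_1-1}$ and $v_0$ on $C$; to absorb it into a cycle by edges of $G$ alone you would need those two vertices to be consecutive on your cycle, and neither the spanning pair of Lemma~1$(ii)$ nor the chord introduced in the last canonical step (which is incident to $u_\ell$, not to $u_{r_1}$) arranges that. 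Claim~1, which you invoke, only rules adjacencies \emph{out} and does not supply the edge you need. As written, nothing in your sketch produces a cycle of length $c+1$ in this sub-case.

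The paper avoids the tight sub-case entirely by constructing an $o$-cycle instead of a cycle, and then invoking Lemma~2. Concretely (stated for the inequality $k\leq r_2-2$, which the paper proves directly; yours is the mirror image), one assumes $k>r_2-2$, takes the term $D'=D_{\widehat{-r_2+1}}$ of the canonical sequence whose leftmost vertex is exactly $u_{-r_2+1}=v_{-2}$, and applies Lemma~1$(i)$ to get a Hamilton $(u_0,u_{\ell'})$-path $P$ of $D'$ with $V(P)=[u_{-r_2+1},u_{\ell'}]$. The $o$-cycle $v_{-1}v_0\,R\,P\,\overrightarrow{C}[u_{\ell'},v_1]\,v_1v_{-1}$ then covers $V(C)\cup V(R)$, using the heavy pair $v_1v_{-1}\in\overline{E}(G)$ guaranteed by Claim~2 to close up, and Lemma~2 converts it to a genuine cycle on the same vertex set, contradicting the maximality of $C$ uniformly for every $r\geq 1$. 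The key ideas you are missing are (a) to use a \emph{sub-term} of the canonical sequence whose far end lands precisely at $v_{-2}$ (resp.\ $v_2$ in your orientation), so the remaining arc is the single vertex $v_{-1}$ (resp.\ $v_1$), and (b) to close via the Ore-edge $v_1v_{-1}$ and Lemma~2 rather than insisting on a literal cycle. With those, no case split on $r$ or on $\ell$ versus $r_1$ is needed.
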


\begin{proof}
Let $D_1,D_2,\ldots,D_r$ be a canonical sequence of
$G[u_{-k},u_{\ell}]$ corresponding to the canonical ordering
$u_{-k},u_{-k+1},\ldots,u_{\ell}$. Suppose that $k>r_2-2$. Consider
the the graph $D'=D_{\widehat{-r_2+1}}$, where $\widehat{-r_2+1}$ be
the smallest integer such that $u_{-r_2+1}\in
V(D_{\widehat{-r_2+1}})$. Let $V(D')=[u_{-r_2+1},u_{{\ell}'}]$. By
Lemma 1, there exists a $(u_0,u_{{\ell}'})$-path $P$ such that
$V(P)=[u_{-r_2+1},u_{{\ell}'}]$. Then
$C'=v_{-1}v_0RP\overrightarrow{C}[u_{{\ell}'},v_1]v_1v_{-1}$ is an
$o$-cycle which contains all the vertices in $V(C)\cup V(R)$, a
contradiction.
\end{proof}

\begin{claim}
If $G[u_{-k},u_{\ell}]$ is $(u_{-k},u_0,u_{\ell})$-composed with
canonical ordering $u_{-k},u_{-k+1},\ldots,u_{\ell}$, where $k\leq
r_2-2$ and $l\leq r_1-2$, and any two nonadjacent vertices in
$[u_{-k-1},u_{\ell+1}]$ have degree sum less than $n$, then one of
the following is true:\\
(1) $G[u_{-k-1},u_{\ell}]$ is $(u_{-k-1},u_0,u_{\ell})$-composed
with canonical ordering $u_{-k-1},u_{-k},\ldots,u_{\ell}$,\\
(2) $G[u_{-k},u_{\ell+1}]$ is $(u_{-k},u_0,u_{\ell+1})$-composed
with canonical ordering $u_{-k},u_{-k+1},\ldots,u_{\ell+1}$,\\
(3) $G[u_{-k-1},u_{\ell+1}]$ is $(u_{-k-1},u_0,u_{\ell+1})$-composed
with canonical ordering $u_{-k-1},u_{-k},\ldots,u_{\ell+1}$.
\end{claim}

\begin{proof}
Assume the opposite, which implies that for every vertex
$u_s\in[u_{-k+1},u_{\ell}]$, $u_{-k-1}u_s\notin E(G)$, and for every
vertex $u_s\in[u_{-k},u_{\ell-1}]$, we have $u_{\ell+1}u_s\notin E(G)$
and $u_{-k-1}u_{\ell+1}\notin E(G)$.

\begin{subclaim}
Let $z\in\{z_1,z_2\}$ and
$u_s\in[u_{-k-1},u_{\ell+1}]\backslash\{u_0\}$. Then $zu_s\notin
\overline{E}(G)$.
\end{subclaim}

\begin{proof}
Without {loss} of generality, we {assume} that $s>0$. If $s=1$, the
assertion is true by Claims 1 and 3. So we {assume} that
$s\in [2,\ell+1]$ and $s-1\in[1,\ell]$. By the definition of a
composed graph, there
exists $t\in [-k,-1]$ such that $G[u_t,u_{s-1}]$ is
$(u_t,u_0,u_{s-1})$-composed. By Lemma 1, there exists a
$(u_0,u_t)$-path $P'$ such that $V(P')=[u_t,u_{s-1}]$.

If $z\neq v_0$, then $P=R[z,u_0]P'\overleftarrow{C}[u_t,u_s]$ is a
$(z,u_s)$-path which contains all the vertices in $V(C)\cup\{z\}$.
By Lemma 2, we have $zu_s\notin \overline{E}(G)$.

If $z=v_0$ and $v_0u_s\in \overline{E}(G)$, then
$C'=RP'\overleftarrow{C}[u_t,v_{-1}]v_{-1}v_1\overleftarrow{C}[v_1,u_s]u_sv_0$
is an $o$-cycle which contains all the vertices in $V(C)\cup V(R)$,
a contradiction.
\end{proof}

Let $G'=G[[u_{-k-1},u_\ell]\cup\{z_1,z_2\}]$ and
$G''=G[[u_{-k-1},u_{\ell+1}]\cup\{z_1,z_2\}]$.

\begin{subclaim}
$G''$, and then $G'$, is $\{K_{1,3},N_{1,1,2}\}$-free.
\end{subclaim}

\begin{proof}
By Claims 5 and 7.1, and the condition that any two nonadjacent
vertices in $[u_{-k-1},u_{\ell+1}]$ have degree sum less than $n$, we
have {that} any two nonadjacent vertices in $G''$
have degree sum less than $n$. Since $G$ (and then $G''$) is
$\{K_{1,3},N_{1,1,2}\}$-heavy, we have that $G''$ is
$\{K_{1,3},N_{1,1,2}\}$-free.
\end{proof}

\begin{subclaim}
$N_{G'}(u_0)\backslash\{z_1\}$ is a clique.
\end{subclaim}

\begin{proof}
If there are two vertices $x,x'\in N_{G'}(u_0)\backslash\{z_1\}$
such that $xx'\notin E(G')$, then the graph induced by
$\{u_0,z_1,x,x'\}$ is a claw, a contradiction.
\end{proof}

Now, we define $N_i=\{x\in V(G'): d_{G'}(x,u_{-k-1})=i\}$. Then we
have $N_0=\{u_{-k-1}\}$, $N_1=\{u_{-k}\}$ and
$N_2=N_{G'}(u_{-k})\backslash\{u_{-k-1}\}$.

By the definition of a composed graph, we have $|N_2|\geq 2$. If there are
two vertices $x,x'\in N_2$ such that $xx'\notin E(G')$, then the
graph induced by $\{u_{-k},u_{-k-1},x,x'\}$ is a claw, a
contradiction. Thus, $N_2$ is a clique.

We assume $u_0\in N_j$, where $j\geq 2$. Then $z_1\in N_{j+1}$ and
$z_2\in N_{j+2}$.

If $|N_i|=1$ for some $i\in[2,j-1]$, say, $N_i=\{x\}$, then $x$ is a
cut vertex of the graph $G[u_{-k},u_l]$. By the definition of a
composed graph, $G[u_{-k},u_l]$ is 2-connected. This implies
$|N_i|\geq 2$ for every $i\in[2,j-1]$.

\begin{subclaim}
For $i\in [1,j]$, $N_i$ is a clique.
\end{subclaim}

\begin{proof}
We prove this claim by induction on $i$. For $i=1,2$, the claim is
true by the analysis above. So we {assume} that
$3\leq i\leq j$, and we have that $N_{i-3},N_{i-2},N_{i-1},N_{i+1}$ and
$N_{i+2}$ is nonempty, and $|N_{i-1}|\geq 2$.

First we choose a vertex $x\in N_i$ which has a neighbor $y\in
N_{i+1}$ {such that it} has a neighbor $z\in
N_{i+2}$. We prove that for every $x'\in N_i$, $xx'\in E(G)$. We
{assume} that $xx'\notin E(G)$.

If $x'y\in E(G)$, then the graph induced by $\{y,x,x',z\}$ is a
claw, a contradiction. Thus, we have $x'y\notin E(G)$. If $x$ and
$x'$ have a common neighbor in $N_{i-1}$, denote
{it} by $w$, then let $v$ be a neighbor of $w$ in
$N_{i-2}$, and the graph induced by $\{w,v,x,x'\}$ is a claw, a
contradiction. Thus we have that $x$ and $x'$ have no common
neighbors in $N_{i-1}$.

Let $w$ be a neighbor of $x$ in $N_{i-1}$ and $w'$ be a neighbor of
$x'$ in $N_{i-1}$. Then $xw',x'w\notin E(G)$. Let $v$ be a neighbor
of $w$ in $N_{i-2}$ and $u$ be a neighbor of $v$ in $N_{i-3}$. If
$w'v\notin E(G)$, then the graph induced by $\{w,v,w',x\}$ is a
claw, a contradiction. Thus we have $w'v\in E(G)$, and then the
graph induced by $\{v,u,w',x',w,x,y\}$ is an $N_{1,1,2}$, a
contradiction.

Thus we have $xx'\in E(G)$ for every $x'\in N_i$.

Now, let $x'$ and $x''$ be two vertices in $N_i$ other than $x$
such that $x'x''\notin E(G)$. We have $xx',xx''\in E(G)$.

If $x'y\in E(G)$, then similarly to the case of $x$, we have
$x'x''\in E(G)$, a contradiction. Thus we have $x'y\notin E(G)$.
Similarly, $x''y\notin E(G)$. Then the graph induced by
$\{x,x',x'',y\}$ is a claw, a contradiction.

Thus, $N_i$ is a clique.
\end{proof}

If there exists some vertex $y\in N_{j+1}$ other than $z_1$, then we
have $yu_0\notin E(G)$ by Claim 7.3. Let $x$ be a neighbor of $y$ in
$N_j$, $w$ be a neighbor of $u_0$ in $N_{j-1}$ and $v$ be a neighbor
of $w$ in $N_{j-2}$. Then $xu_0\in E(G)$ by Claim 7.4 and $xw\in
E(G)$ by Claim 7.3. Thus the graph induced by
$\{w,v,x,y,u_0,z_1,z_2\}$ is an $N_{1,1,2}$, a contradiction. So we
assume that all vertices in $[u_{-k},u_{\ell}]$ are in
$\bigcup_{i=1}^jN_i$.

If $u_{\ell}\in N_j$, then let $w$ be a neighbor of $u_0$ in
$N_{j-1}$ and $v$ be a neighbor of $w$ in $N_{j-2}$. Then the graph
induced by $\{w,v,u_0,z_1,u_{\ell},u_{{\ell}+1}\}$ is an
$N_{1,1,2}$, a contradiction. Thus we have that $u_{\ell}\notin N_j$
and then $j\geq 3$.

Let $u_{\ell}\in N_i$, where $i\in[2,j-1]$. If $u_{\ell}$ has a
neighbor in $N_{i+1}$, then let $y$ be a neighbor of $u_{\ell}$ in
$N_{i+1}$, and $w$ be a neighbor of $u_{\ell}$ in $N_{i-1}$. Then
the graph induced by $\{u_{\ell},w,y,u_{\ell+1}\}$ is a claw, a
contradiction. So we have that $u_{\ell}$ has no neighbors in
$N_{i+1}$.

Let $x\in N_i$ be a vertex other than $u_{\ell}$ which has a
neighbor $y$ in $N_{i+1}$ such that it has a neighbor $z$ in
$N_{i+2}$. Let $w$ be a neighbor of $x$ in $N_{i-1}$, and $v$ be a
neighbor of $w$ in $N_{i-2}$. If $u_{\ell}w\notin E(G)$, then the
graph induced by $\{x,w,u_{\ell},y\}$ is a claw, a contradiction. So
we have that $u_{\ell}w\in E(G)$. Then the graph induced by
$\{w,v,u_{\ell},u_{\ell+1},x,y,z\}$ is an $N_{1,1,2}$, a
contradiction.

Thus the claim holds.
\end{proof}

Now we choose $k,\ell$ such that\\
(1) $G[u_{-k},u_{\ell}]$ is $(u_{-k},u_0,u_{\ell})$-composed with
canonical ordering $u_{-k},u_{-k+1},\ldots,u_{\ell}$;\\
(2) any two nonadjacent vertices in $[u_{-k},u_{\ell}]$ have degree
sum less than $n$; and\\
(3) $k+\ell$ is as big as possible.

By Claim 7, we have that there exists a vertex $u_s\in
[u_{-k+1},u_{\ell}]$ such that $d(u_{-k-1})+d(u_s)\geq n$, or there
exists a vertex $u_s\in [u_{-k},u_{\ell-1}]$ such that
$d(u_s)+d(u_{\ell+1})\geq n$, or $d(u_{-k-1})+d(u_{\ell+1})\geq n$.
Thus, we have

\begin{claim}
$(u_{-k-1},u_{\ell})$ or $(u_{-k},u_{\ell+1})$ or
$(u_{-k-1},u_{\ell+1})$ is $u_0$-good on $C$.
\end{claim}

\begin{proof}
If there exists a vertex $u_s\in [u_{-k+1},u_{\ell}]$ such that
$d(u_{-k-1})+d(u_s)\geq n$, then, by Lemma 1, there exists a
$(u_0,u_{\ell})$-path $P$ such that $V(P)=[u_{-k},u_{\ell}]$, and
there exists a $(u_0u_{\ell},u_su_{-k})$-pair $D'$ such that
$V(D')=[u_{-k},u_{\ell}]$, and $D=D'+u_{-k}u_{-k-1}$ is a
$(u_0u_{\ell},u_su_{-k-1})$-pair such that
$V(D)=[u_{-k-1},u_{\ell}]$. Thus $(u_{-k-1},u_{\ell})$ is $u_0$-good
on $C$.

If there exists a vertex $u_s\in [u_{-k},u_{\ell-1}]$ such that
$d(u_s)+d(u_{\ell+1})\geq n$, we can prove the result similarly.

If $d(u_{-k-1})+d(u_{\ell+1})\geq n$, then by Lemma 1, there exists
a $(u_0,u_{\ell})$-path $P'$ such that $V(P')=[u_{-k},u_{\ell}]$ and
there exists a $(u_0,u_{-k})$-path $P''$ such that
$V(P'')=[u_{-k},u_{\ell}]$. Then $P=P'u_1u_{\ell+1}$ is a
$(u_0,u_{\ell+1})$-path such that $V(P)=[u_{-k},u_{\ell+1}]$, and
$D=P''u_{-k}u_{-k-1}\cup u_{\ell+1}$ is a
$(u_0u_{\ell+1},u_{\ell+1}u_{-k-1})$-pair such that
$V(D)=[u_{-k-1},u_{\ell+1}]$. Thus $(u_{-k-1},u_{\ell+1})$ is
$u_0$-good on $C$.
\end{proof}

\begin{claim}
There exist $v_{-k'}\in V(\overrightarrow{C}[v_{-1},u_{-k-1}])$ and
$v_{{\ell}'}\in V(\overleftarrow{C}[v_1,u_{\ell+1}])$ such that
$(v_{-k'},v_{{\ell}'})$ is is $v_0$-good on $C$.
\end{claim}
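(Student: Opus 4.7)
The plan is to prove Claim 9 by mirroring the argument that produced Claim 8, with $v_0$ in the role of $u_0$, the path $R$ read in reverse (so that $z_{r-1}$ plays the role of $z_2$), and the cycle neighbors $v_1, v_{-1}$ of $v_0$ in place of $u_1, u_{-1}$. The main obstacle is that the WLOG step at the beginning of Case 1 assumed only $u_1u_{-1} \in E(G)$, so the triangle on $\{v_{-1}, v_0, v_1\}$ that is required to start the composed-graph induction is not automatically available. I therefore split the proof into two subcases according to whether $v_1v_{-1}$ is actually an edge.

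In Subcase 1, $v_1v_{-1} \in E(G)$. Then $G[v_{-1}, v_1]$ is $(v_{-1}, v_0, v_1)$-composed, mirroring the base case used at the start of Claim 8's argument. I rerun in turn the analogs of Claims 5, 6, 7, and 8 with $v_0$ in place of $u_0$: the analog of Claim 5 gives $z_{r-1}v_0 \notin \overline{E}(G)$; the analog of Claim 7 forces any maximal composed structure at $v_0$ to lie strictly inside the arcs bounded by $u_{-k-1}$ and $u_{\ell+1}$ (giving the required positional control); and the analog of Claim 8 extracts from a maximal such composed structure a $v_0$-good pair $(v_{-k'}, v_{\ell'})$ whose endpoints lie precisely on the arcs $\overrightarrow{C}[v_{-1}, u_{-k-1}]$ and $\overleftarrow{C}[v_1, u_{\ell+1}]$ demanded by the statement.

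In Subcase 2, $v_1v_{-1} \notin E(G)$. By Claim 2 we then still have $d(v_1)+d(v_{-1}) \geq n$, and I claim that $(v_{-1}, v_1)$ is already $v_0$-good on $C$. Taking $X = \{v_{-1}, v_0, v_1\}$, $x_1 = v_{-1}$, $x_2 = v_1$, $j = 1$, and $x' = v_1$: condition (1) holds via the edge $v_0v_1$ as the required $(v_0, v_1)$-path on $X \setminus \{v_{-1}\}$; condition (2) holds via the $(v_0v_1, v_1v_{-1})$-disjoint path pair consisting of the trivial one-vertex path at $v_1$ together with the edge $v_0v_{-1}$, which covers exactly $X$; condition (3) is precisely the degree inequality $d(v_{-1})+d(v_1) \geq n$; and the positional requirements $v_{-1} \in \overrightarrow{C}[v_{-1}, u_{-k-1}]$, $v_1 \in \overleftarrow{C}[v_1, u_{\ell+1}]$ are immediate since these vertices are the inner endpoints of the two arcs.

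The real difficulty lies in Subcase 1. Although the intermediate results leading to Claim 8 are structurally symmetric under the exchange $u_0 \leftrightarrow v_0$, each invocation of claw-heaviness or $N_{1,1,2}$-heaviness must be re-verified with the forbidden induced subgraph re-attached on the $v_0$ side, and the numerical bounds $k \leq r_2 - 2$, $\ell \leq r_1 - 2$ in the analog of Claim 7 must be re-derived in their mirrored form using the $o$-cycle $v_1v_{-1}\cdots\overrightarrow{C}[\cdot,\cdot]\cdots$ constructed from the reversed path. I do not anticipate any new ideas here, only a lengthy mirrored reprise of Claims 5--8; Subcase 2, by contrast, is a direct check from the definitions.
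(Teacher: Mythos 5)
Your proposal is correct and takes essentially the same approach as the paper: the paper's proof of this claim also splits on whether $v_1v_{-1}\in E(G)$, dispatches the non-edge case with exactly the witness you exhibit (the path $v_0v_1$ and the pair $v_0v_{-1}\cup v_1$ together with $d(v_1)+d(v_{-1})\geq n$ from Claim~2), and in the edge case mirrors Claims~6--8 with $v_0$, the reversed path $R$, and the truncated arcs (the paper's Claim~9.1 gives the tighter bounds $k'\leq r_2-k-1$, $\ell'\leq r_1-\ell-1$ that locate the resulting pair on $\overrightarrow{C}[v_{-1},u_{-k-1}]$ and $\overleftarrow{C}[v_1,u_{\ell+1}]$). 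The only cosmetic slip is attributing the positional bound to the analog of Claim~7 rather than the analog of Claim~6, but the substance of the argument is the same.
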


\begin{proof}
By Claim 6, we have $k\leq r_2-2$ and $l\leq r_1-2$.

If $v_1v_{-1}\notin E(G)$, then by Claim 2, $d(v_1)+d(v_{-1})\geq
n$. Then $P=v_0v_1$ is a $(v_0,v_1)$-path and $D=v_0v_{-1}\cup v_1$
is a $(v_0v_1,v_{-1}v_1)$-pair. Thus we have that $(v_{-1},v_1)$ is
$v_0$-good on $C$.

Now we assume that $v_1v_{-1}\in E(G)$, and then $G[v_{-1},v_1]$ is
$(v_{-1},v_0,v_1)$-composed.

Let $r'_2=r_2-k$ and $r'_1=r_1-\ell$.

\begin{subclaim}
If $G[v_{-k'},v_{{\ell}'}]$ is $(v_{-k'},v_0,v_{{\ell}'})$-composed
with canonical ordering $v_{-k'},v_{-k'+1},\ldots,$ $v_{{\ell}'}$,
then $k'\leq r'_2-1$ and ${\ell}'\leq r'_1-1$.
\end{subclaim}

\begin{proof}
Let $D_1,D_2,\ldots,D_r$ be a canonical sequence of
$G[v_{-k'},v_{{\ell}'}]$ corresponding to the canonical ordering
$v_{-k'},v_{-k'+1},\ldots,v_{{\ell}'}$. Suppose that $k'>r'_2-1$.
Consider the the graph $D'=D_{\widehat{-r'_2}}$, where
$\widehat{-r'_2}$ is the smallest integer such that $v_{-r'_2}\in
V(D_{\widehat{-r'_2}})$. Let $V(D')=[v_{-r'_2},v_{{\ell}''}]$. By
Lemma 1, there exists a $(v_0,v_{{\ell}''})$-path $P$ such that
$V(P)=[v_{-r'_2},u_{{\ell}''}]$. Then
$C'=P\overrightarrow{C}[u_{\ell},v_{{\ell}''}]P'R$ is a cycle which
contains all the vertices in $V(C)\cup V(R)$, a contradiction.
\end{proof}

Similarly to Claim 7, we have

\begin{subclaim}
If $G[v_{-k'},v_{{\ell}'}]$ is
$(v_{-k'},v_0,v_{{\ell}'})$-composed with canonical ordering
$v_{-k'},v_{-k'+1},$ $\ldots,v_{{\ell}'}$, where $k'\leq r'_2-1$ and
$\ell\leq r'_1-1$, and any two nonadjacent vertices in
$[v_{-k'-1},v_{{\ell}'+1}]$ have degree sum less than $n$, then one
of the following is true:\\
(1) $G[v_{-k'-1},v_{{\ell}'}]$ is
$(v_{-k'-1},v_0,v_{{\ell}'})$-composed
with canonical ordering $v_{-k'-1},v_{-k'},\ldots,v_{{\ell}'}$,\\
(2) $G[v_{-k'},v_{l'+1}]$ is $(v_{-k'},v_0,v_{{\ell}'+1})$-composed
with canonical ordering $v_{-k'},v_{-k'+1},\ldots,v_{{\ell}'+1}$,\\
(3) $G[v_{-k'-1},v_{l'+1}]$ is
$(v_{-k'-1},v_0,v_{{\ell}'+1})$-composed with canonical ordering
$v_{-k'-1},v_{-k'},\ldots,$ $v_{{\ell}'+1}$.
\end{subclaim}

Now we choose $k',{\ell}'$ such that\\
(1) $G[v_{-k'},v_{{\ell}'}]$ is $(v_{-k'},v_0,v_{{\ell}'})$-composed
with canonical ordering $v_{-k'},v_{-k'+1},\ldots,v_{{\ell}'}$;\\
(2) any two nonadjacent vertices in $[v_{-k'},v_{{\ell}'}]$ have
degree sum less than $n$; and\\
(3) $k'+{\ell}'$ is as big as possible.

Similarly to Claim 8, we have $(v_{-k'-1},v_{l'})$ or
$(v_{-k'},v_{l'+1})$ or $(v_{-k'-1},v_{l'+1})$ is $v_0$-good on $C$.
\end{proof}

From Claims 8 and 9, we get that there exists a cycle which
contains all the vertices in $V(C)\cup V(R)$ by Lemma 3, a
contradiction.

\begin{case}
$r=1$ and $u_0v_0\in E(G)$.
\end{case}

We have $u_0u_{-1}\in E(G)$ and $u_0u_{-r_2}\notin E(G)$, where
$u_{-r_2}=v_{-1}$. Let $u_{-k-1}$ be the first vertex in
$\overleftarrow{C}[u_{-1},v_{-1}]$ such that $u_0u_{-k-1}\notin
E(G)$. Then $k\leq r_2-1$.

Similarly, let $v_{\ell+1}$ be the first vertex in
$\overleftarrow{C}[v_1,u_1]$ such that $v_0v_{\ell+1}\notin E(G)$.
Then $\ell\leq r_1-1$.

\begin{claim}
Let $x\in[u_{-k-1},u_{-1}]$ and $y\in[v_1,v_{\ell+1}]$. Then
\begin{mathitem}
\item $xz_1,xv_0\notin \overline{E}(G)$,
\item $yz_1,yu_0\notin \overline{E}(G)$,
\item $xy\notin \overline{E}(G)$.
\end{mathitem}
\end{claim}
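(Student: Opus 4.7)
The strategy is to invoke Lemma~2: exhibiting, for each asserted non-$\overline{E}(G)$ pair $(a,b)$, an $(a,b)$-path or $o$-path in $G$ of more than $c$ vertices forces $ab \notin \overline{E}(G)$. Since $|V(C) \cup \{z_1\}| = c+1$, it suffices to construct Hamilton $o$-paths of $G[V(C) \cup \{z_1\}]$ with the prescribed endpoints. Parts~(i) and~(ii) are symmetric under the swap $u \leftrightarrow v$, so we focus on~(i).

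The key ingredients are the fan chords $u_0 u_{-j}$ ($j \in [1,k]$) from the definition of $k$, the edges $u_0 v_0$, $z_1 u_0$, $z_1 v_0$, the cycle edges of $C$, and the Ore-edge $u_1 u_{-1} \in \overline{E}(G)$, obtained by applying Claim~2 to the induced claw $\{u_0;\,z_1,u_1,u_{-1}\}$ whose non-edges $z_1 u_{\pm 1}$ are supplied by Claim~1. For $x = u_s$ with $s \in [-k-1,-1]$, I will construct a Hamilton $o$-path from $z_1$ to $u_s$ case-by-case. When $s=-1$, the honest path $z_1\,u_0\,\overrightarrow{C}[u_0,u_{-1}]$ suffices. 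When $s=-k-1$, take
\[
P_{-k-1} \;=\; z_1\,u_0\,u_{-k}\,u_{-k+1}\,\cdots\,u_{-1}\,u_1\,u_2\,\cdots\,u_{r_1}\,v_0\,u_{-r_2}\,\cdots\,u_{-k-2}\,u_{-k-1},
\]
which enters the fan via the chord $u_0 u_{-k}$ and splices the two fan sides via the Ore-edge $u_1 u_{-1}$; a quick check gives $|V(P_{-k-1})| = c+1$. For intermediate $s \in [-k,-2]$, an analogous construction enters the fan via some $u_0 u_{-j}$ with $j \in [1,k-1]$ chosen so that the Ore-edge splice at $u_1 u_{-1}$ terminates precisely at $u_s$ after covering the remaining vertices. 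The assertion $x v_0 \notin \overline{E}(G)$ is handled by the same machinery, with the bridge $u_0\,z_1\,v_0$ in the above paths replaced by the chord $u_0 v_0$.

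For part~(iii), with $y = u_t$, $t \in [r_1-\ell, r_1]$, I will construct a Hamilton $o$-path of $G[V(C) \cup \{z_1\}]$ from $u_s$ to $u_t$. Since $z_1$ has degree two (to $u_0$ and $v_0$) inside $G[V(C) \cup \{z_1\}]$, any such Hamilton $o$-path must contain $z_1$ as an interior vertex flanked by $u_0$ and $v_0$, so it splits as $u_s\,\cdots\,u_0\,z_1\,v_0\,\cdots\,u_t$ (or the mirror). The route from $u_s$ to $u_0$ uses the $u_0$-fan and the Ore-edge $u_1 u_{-1}$ exactly as in~(i); the route from $v_0$ to $u_t$ uses the $v_0$-fan chords $v_0 u_{j'}$ ($j' \in [r_1-\ell+1, r_1]$) and the Ore-edge $v_1 v_{-1} \in \overline{E}(G)$ (obtained from Claim~2 in the same way). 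In the boundary case $(s,t) = (-1, r_1)$ the plain path $u_{-1}\,u_{-2}\,\cdots\,u_{-r_2}\,v_0\,z_1\,u_0\,u_1\,\cdots\,u_{r_1}$ is already a Hamilton $(u_{-1},u_{r_1})$-path.

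The main obstacle is the combinatorial casework: every target value of $s$ (or of $(s,t)$ in~(iii)) dictates a specific choice of fan chord and a specific location for the Ore-edge splice, and one must verify that no vertex is repeated while all $c+1$ vertices are covered. The consecutive-chord fan structure at $u_0$ and at $v_0$, together with the Ore-edges $u_1 u_{-1}$ and $v_1 v_{-1}$ at the fan boundaries from Claim~2, provide exactly the flexibility needed to complete the construction in every case.
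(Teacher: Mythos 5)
Your overall strategy is the same as the paper's: for each asserted non-$\overline{E}(G)$ pair you build an $o$-path (or, equivalently for the paper, close it up into an $o$-cycle) on all $c+1$ vertices of $V(C)\cup\{z_1\}$ and invoke Lemma~2 together with the remark following it. The $o$-path you write down for $xz_1\notin\overline{E}(G)$, namely $z_1u_0u_{-k'+1}\cdots u_{-1}\,u_1\cdots u_{r_1}\,v_0\,u_{-r_2}\cdots u_{-k'}$ with the chord $u_0u_{-k'+1}$ and the Ore-edge $u_1u_{-1}$ of Claim~2, is exactly the path one gets by opening the paper's $o$-cycle, so that part is sound, and your high-level account of part $(iii)$ (split at $z_1$, one side routed through the $u_0$-fan with $u_1u_{-1}$, the other through the $v_0$-fan with $v_1v_{-1}$) also corresponds to the paper's construction once you take the ``mirror'' branch you allow for.

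There is, however, a genuine gap in your handling of the second half of $(i)$, $xv_0\notin\overline{E}(G)$. You say to replace ``the bridge $u_0z_1v_0$'' by the chord $u_0v_0$, but your paths contain no such bridge: $z_1u_0$ is a pendant edge at one end and $v_0$ sits in the interior (between $u_{r_1}$ and $u_{-r_2}$). If the intended move is to use $u_0v_0$ in place of $z_1u_0$, the vertex $z_1$ is simply dropped, the path then has only $c$ vertices, and Lemma~2 gives nothing. The actual fix---which the paper uses---is not a substitution at $u_0$ but a re-routing at $v_0$: one makes $v_0$ an endpoint by starting the $o$-path $v_0\,z_1\,u_0\,u_{-k'+1}\cdots u_{-1}\,u_1\cdots u_{r_1}\,u_{-r_2}\cdots u_{-k'}$, and because $v_0$ is no longer available in the middle, the jump from $u_{r_1}=v_1$ to $u_{-r_2}=v_{-1}$ must be made via the second Ore-edge $v_1v_{-1}\in\overline{E}(G)$ from Claim~2. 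So this case needs the $v_1v_{-1}$ Ore-edge in addition to $u_1u_{-1}$, which your proposal does not use in part $(i)$. (Relatedly, the edge cases $x=u_{-1}$ and $y=v_1$ are already covered by Claims~1 and~3, which you could just cite rather than rebuild.)
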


\begin{proof}
$(i)$ If $x=u_{-1}$, then by Claims 1 and 3, we have
$u_{-1}z_1,u_{-1}v_0\notin \overline{E}(G)$. So we {assume} that
$x=u_{-k'}$ where $-k'\in[-k-1,-2]$ and $u_0u_{-k'+1}\in E(G)$.

If $u_{-k'}z_1\in \overline{E}(G)$, then
$C'=u_0u_{-k'+1}\overrightarrow{C}[u_{-k'+1},u_{-1}]u_{-1}u_1\overrightarrow{C}[u_1,u_{-k'}]u_{-k'}z_1u_0$
is an $o$-cycle which contains all the vertices in $V(C)\cup V(R)$,
a contradiction.

If $u_{-k'}v_0\in \overline{E}(G)$, then
$C'=u_0u_{-k'+1}\overrightarrow{C}[u_{-k'+1},u_{-1}]u_{-1}u_1\overrightarrow{C}[u_1,v_1]v_1v_{-1}\overrightarrow{C}[v_{-1},u_{-k'}]$
$u_{-k'}v_0R$ is an $o$-cycle which contains all the vertices in
$V(C)\cup V(R)$, a contradiction.

The assertion $(ii)$ can be proved similarly.

$(iii)$ If $x=u_{-1}$ and $y=v_1$, then by Claim 3, we have
$xy\notin \overline{E}(G)$.

If $u_{-k'}v_1\in \overline{E}(G)$, where $k'\in[2,k+1]$, then
$C'=u_0R\overrightarrow{C}[v_0,u_{-k'}]u_{-k'}v_1\overleftarrow{C}[v_1,u_1]u_1u_{-1}$
$\overleftarrow{C}[u_{-1},u_{-k'+1}]u_{-k'+1}u_0$ is an $o$-cycle
which contains all the vertices in $V(C)\cup V(R)$, a contradiction.

If $u_{-1}v_{{\ell}'}\in \overline{E}(G)$, where
${\ell}'\in[2,\ell+1]$, then we can prove the result similarly.

If $u_{-k'}v_{{\ell}'}\in \overline{E}(G)$, where $k'\in[2,k+1]$ and
${\ell}'\in[2,\ell+1]$, then
$C'=u_0u_{-k'+1}\overrightarrow{C}[u_{-k'+1},u_{-1}]$
$u_{-1}u_1\overrightarrow{C}[u_1,v_{l'}]v_{l'}u_{-k'}\overleftarrow{C}[u_{-k'},v_{-1}]v_{-1}v_1\overleftarrow{C}[v_1,v_{l'-1}]v_{l'-1}v_0R$
is an $o$-cycle which contains all the vertices in $V(C)\cup V(R)$,
a contradiction.
\end{proof}

\begin{claim}
Either $u_{-k-1}u_0\notin \overline{E}(G)$, or
$v_{\ell+1}v_0\notin \overline{E}(G)$.
\end{claim}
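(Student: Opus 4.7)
The plan is to argue by contradiction, combining the two assumptions with the degree-sum bounds already furnished by Claim 10. Suppose both $u_{-k-1}u_0\in\overline{E}(G)$ and $v_{\ell+1}v_0\in\overline{E}(G)$. Since $u_{-k-1}$ was chosen as the first vertex along $\overleftarrow{C}[u_{-1},v_{-1}]$ with $u_0u_{-k-1}\notin E(G)$, and analogously for $v_{\ell+1}$, neither of these two pairs is an actual edge of $G$, so membership in $\overline{E}(G)$ forces $d(u_0)+d(u_{-k-1})\ge n$ and $d(v_0)+d(v_{\ell+1})\ge n$.

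Next I would apply Claim 10(i) with $x=u_{-k-1}\in[u_{-k-1},u_{-1}]$ to obtain $u_{-k-1}v_0\notin\overline{E}(G)$, and Claim 10(ii) with $y=v_{\ell+1}\in[v_1,v_{\ell+1}]$ to obtain $v_{\ell+1}u_0\notin\overline{E}(G)$; by the definition of $\overline{E}(G)$ these give the strict inequalities $d(u_{-k-1})+d(v_0)<n$ and $d(v_{\ell+1})+d(u_0)<n$. Adding the two ``$\ge n$'' inequalities and, separately, the two ``$<n$'' inequalities now shows that the single quantity
\[
d(u_0)+d(u_{-k-1})+d(v_0)+d(v_{\ell+1})
\]
is simultaneously at least $2n$ and strictly less than $2n$, a contradiction, and the proof is complete.

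There is no real obstacle: all of the substance sits in Claim 10, and Claim 11 amounts to observing that its ``cross'' degree-sum bounds are arithmetically incompatible with the two assumed ``straight'' lower bounds. The only point worth double-checking is that $u_{-k-1}$ and $v_{\ell+1}$ genuinely lie in the index ranges $[u_{-k-1},u_{-1}]$ and $[v_1,v_{\ell+1}]$ of Claim 10(i)--(ii), which is immediate from the defining constraints $k\le r_2-1$ and $\ell\le r_1-1$ recorded before Claim 10.
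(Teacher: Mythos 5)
Your proof is correct and follows essentially the same route as the paper: assume both memberships, note that neither pair is an actual edge by the choice of $u_{-k-1}$ and $v_{\ell+1}$ so the degree-sum lower bounds hold, then apply Claim 10 to get the two cross degree-sum upper bounds and derive the arithmetic contradiction. The paper just states the contradiction directly from the four inequalities rather than spelling out the sum, but the substance is identical.
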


\begin{proof}
Assume the opposite. Since $u_{-k-1}u_0,v_{\ell+1}v_0\notin E(G)$,
we have $d(u_{-k-1})+d(u_0)\geq n$ and $d(v_{\ell+1})+d(v_0)\geq n$.
By Claim 10, we have $d(u_0)+d(v_{\ell+1})<n$ and
$d(v_0)+d(u_{-k-1})<n$, a contradiction.
\end{proof}

Without loss of generality, we assume that $u_{-k-1}u_0\notin
\overline{E}(G)$. If $v_{\ell+1}v_0\notin \overline{E}(G)$, then the
subgraph induced by
$\{z_1,v_0,v_\ell,v_{\ell+1},u_0,u_{-k},u_{-k-1}\}$ is a $D$ which
is not heavy, a contradiction. Since $v_0v_{\ell+1}\notin E(G)$, we
have $d(v_0)+d(v_{\ell+1})\geq n$.

\begin{claim}
Either $(v_{-1},v_1)$ or $(v_{-1},v_{\ell+1})$ is $v_0$-good on $C$.
\end{claim}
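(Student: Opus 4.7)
The strategy is to split on whether $v_{-1}v_1 \in E(G)$. In both subcases the argument is a direct construction of the witnesses in the definition of $v_0$-good on $C$, built from cycle edges together with whatever edges we already have from Claim 2, Claim 10, and the definition of $\ell$.

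First, if $v_{-1}v_1 \notin E(G)$, then Claim 2 gives $d(v_{-1}) + d(v_1) \geq n$, and I would show that $(v_{-1}, v_1)$ is $v_0$-good. With $X = \{v_{-1}, v_0, v_1\}$, take $j = 2$ (so that $x_j = v_1$ and $x_{3-j} = v_{-1}$) and $x' = v_{-1}$. The required $(v_0, v_{-1})$-path spanning $X \setminus \{v_1\}$ is simply the cycle edge $v_0 v_{-1}$, and the required $(v_0v_{-1}, v_{-1}v_1)$-pair spanning $X$ consists of the trivial one-vertex path $\{v_{-1}\}$ together with the cycle edge $v_0 v_1$; condition (3) is exactly $d(v_1)+d(v_{-1}) \geq n$.

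Second, if $v_{-1}v_1 \in E(G)$, I would show that $(v_{-1}, v_{\ell+1})$ is $v_0$-good. Note that $\ell \geq 1$, since $v_0 v_1$ is a cycle edge and $v_{\ell+1}$ was defined as the first non-neighbor of $v_0$ along $\overleftarrow{C}[v_1, u_1]$; the degree-sum $d(v_0) + d(v_{\ell+1}) \geq n$ has just been established from the $D$-heavy hypothesis. Set $X = \{v_{-1}, v_0, v_1, \ldots, v_{\ell+1}\}$, $j = 2$ (so that $x_j = v_{\ell+1}$) and $x' = v_0$. The $(v_0, v_{-1})$-path spanning $X \setminus \{v_{\ell+1}\}$ is $v_0 v_\ell v_{\ell-1} \cdots v_1 v_{-1}$: the first edge $v_0 v_\ell$ exists by the definition of $\ell$, the middle edges are cycle edges on $\overleftarrow{C}$, and the last edge $v_1 v_{-1}$ is precisely the subcase assumption. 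The required $(v_0v_{-1}, v_0 v_{\ell+1})$-pair spanning $X$ is then the trivial one-vertex path $\{v_0\}$ together with the path $v_{-1} v_1 v_2 \cdots v_{\ell+1}$, and condition (3) reads $d(v_{\ell+1}) + d(v_0) \geq n$.

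The only delicate point in either subcase is that one component of the disjoint-path pair $D$ is a single vertex lying in both endpoint pairs; this is allowed by the earlier definition of an $(x_1x_2, y_1y_2)$-pair, which only requires two components that are $(\{x_1,x_2\},\{y_1,y_2\})$-paths, and a single vertex qualifies as a path from itself to itself. Beyond this observation, the argument is bookkeeping: pasting together cycle edges with the one extra edge ($v_{-1}v_1$ in Subcase 2) provided by the case assumption.
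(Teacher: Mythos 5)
Your proof is correct and takes essentially the same approach as the paper: both split on whether $v_{-1}v_1\in E(G)$ and then exhibit the witnesses for the definition of $v_0$-good by direct construction from cycle edges, the edge $v_1v_{-1}$ (in the second subcase), and the heavy pair supplied by Claim~2 or the $D$-heavy hypothesis. The only cosmetic difference is in the first subcase, where you choose $j=2$, $x'=v_{-1}$ while the paper chooses $j=1$, $x'=v_1$; this is just the symmetric relabeling and both verify identically.
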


\begin{proof}
If $v_1v_{-1}\notin E(G)$, then, by Claim 2, $d(v_1)+d(v_{-1})\geq
n$. Then $P=v_0v_1$ is a $(v_0,v_1)$-path and $D=v_0v_{-1}\cup v_1$
is a $(v_0v_1,v_{-1}v_1)$-pair. Thus, $(v_{-1},v_1)$ is
$v_0$-good on $C$.

If $v_1v_{-1}\in E(G)$, then
$P=v_0v_{\ell}\overrightarrow{C}[v_{\ell},v_1]v_1v_{-1}$ is a
$(v_0,v_{-1})$-path and $D=v_0\cup
v_{-1}v_1\overleftarrow{C}[v_1,v_{\ell+1}]$ is a
$(v_0v_{-1},v_0v_{l+1})$-pair. Since $d(v_0)+d(v_{\ell+1})\geq n$,
we have that $(v_{-1},v_{\ell+1})$ is $v_0$-good on $C$.
\end{proof}

\begin{claim}
If $G[u_{-k'},u_{{\ell}'}]$ is $(u_{-k'},u_0,u_{{\ell}'})$-composed
with canonical ordering $u_{-k'},u_{-k'+1},\ldots,$ $u_{{\ell}'}$,
then $k'\leq r_2-2$ and ${\ell}'\leq r_1-\ell-1$.
\end{claim}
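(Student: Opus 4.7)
The plan is to mimic the argument of Claim~6 in both directions, adapting it to the present Case~2 where $R=u_0z_1v_0$ has length $2$ and $u_0v_0\in E(G)$. Both inequalities will be proved by contradiction: if either fails, then using the canonical structure of a suitable initial segment of the composed graph $G[u_{-k'},u_{\ell'}]$ together with the edges of $R$, selected edges on $C$, and the $o$-edge $v_1v_{-1}$ from Claim~2, I will exhibit an $o$-cycle that covers $V(C)\cup\{z_1\}$, so that Lemma~2 produces an actual cycle longer than $C$, contradicting the choice of $C$.

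For the bound $k'\le r_2-2$, suppose $k'\ge r_2-1$, so that $u_{-r_2+1}\in V(G[u_{-k'},u_{\ell'}])$. Let $D_{\widehat{-r_2+1}}$ denote the smallest member of the canonical sequence containing $u_{-r_2+1}$, with vertex set $[u_{-r_2+1},u_\lambda]$, and let $P$ be a Hamilton $(u_0,u_\lambda)$-path in it (Lemma~1). Then
\[
C'=v_{-1}v_0z_1u_0Pu_\lambda\overrightarrow{C}[u_\lambda,v_1]v_1v_{-1}
\]
is an $o$-cycle (whose only $o$-edge is $v_1v_{-1}$) containing $V(C)\cup\{z_1\}$; this essentially reproduces the proof of Claim~6 with $R$ replaced by $u_0z_1v_0$.

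For the bound $\ell'\le r_1-\ell-1$, suppose $\ell'\ge r_1-\ell$, so that $u_{r_1-\ell}=v_{\ell+1}$ lies in $G[u_{-k'},u_{\ell'}]$. Let $D_{\widehat{r_1-\ell}}$ be the smallest member of the canonical sequence containing $u_{r_1-\ell}$, with vertex set $[u_{-\kappa},u_{r_1-\ell}]$, and let $P$ be a Hamilton $(u_0,u_{-\kappa})$-path in it (Lemma~1). Note that $\ell\ge 1$ since $v_0v_1=v_0u_{r_1}$ is an edge of $C$. The new ingredient is that the $\ell$ vertices $v_1,\dots,v_\ell$ (i.e.\ $u_{r_1},\dots,u_{r_1-\ell+1}$) lying outside $V(D_{\widehat{r_1-\ell}})$ are absorbed via the edges $v_0v_j$ for $1\le j\le\ell$, which exist by the defining property of $v_{\ell+1}$. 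Concretely,
\[
C'=u_{-\kappa}\overleftarrow{C}[u_{-\kappa},v_{-1}]v_{-1}v_1\overleftarrow{C}[v_1,v_\ell]v_\ell v_0z_1u_0Pu_{-\kappa}
\]
uses only the $o$-edge $v_{-1}v_1$ (Claim~2) together with $C$-edges, edges of $R$, and edges of $P$; a direct vertex count gives $V(C')=V(C)\cup\{z_1\}$.

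The main obstacle is the second bound: because the Hamilton path in $D_{\widehat{r_1-\ell}}$ stops at $u_{r_1-\ell}$, a naive imitation of Claim~6 would leave the $\ell$ vertices $u_{r_1-\ell+1},\dots,u_{r_1}$ uncovered, and reinserting them requires the simultaneous use of the $v_0v_j$-edges and the $v_1v_{-1}$ $o$-edge from Claim~2. A small boundary check is also required in the subcase $\kappa=r_2$, where $\overleftarrow{C}[u_{-\kappa},v_{-1}]$ degenerates to a single vertex, but the same construction still works.
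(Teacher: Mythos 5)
Your proof is correct and reconstructs precisely the argument the paper omits, indicating only that the claim ``can be proved similarly as Claims 6 and 9.1.'' The first bound is Claim~6's $o$-cycle with $R=u_0z_1v_0$ substituted in, and for the second you correctly identify the extra ingredient the Case~2 setup supplies: routing through $v_1,\dots,v_\ell$ and the edge $v_\ell v_0$ guaranteed by the choice of $v_{\ell+1}$, closing up with the $o$-edge $v_1v_{-1}$ from Claim~2, which is exactly the adaptation of Claim~9.1 the authors have in mind.
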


\begin{proof}
The claim can be proved similarly as Claims 6 and 9.1.
\end{proof}

Now we prove the following claim.

\begin{claim}
If $G[u_{-k'},u_{{\ell}'}]$ is
$(u_{-k'},u_0,u_{{\ell}'})$-composed with canonical ordering
$u_{-k'},u_{-k'+1},\ldots,u_{{\ell}'}$, where $k'\leq r_2-2$ and
${\ell}'\leq r_1-\ell-1$, and any two nonadjacent vertices in
$[u_{-k'-1},u_{{\ell}'+1}]$ have degree sum less than $n$, then one
of the following is true:\\
(1) $G[u_{-k'-1},u_{{\ell}'}]$ is
$(u_{-k'-1},u_0,u_{{\ell}'})$-composed
with canonical ordering $u_{-k'-1},u_{-k'},\ldots,u_{{\ell}'}$,\\
(2) $G[u_{-k'},u_{{\ell}'+1}]$ is
$(u_{-k'},u_0,u_{{\ell}'+1})$-composed
with canonical ordering $u_{-k'},u_{-k'+1},\ldots,u_{{\ell}'+1}$,\\
(3) $G[u_{-k'-1},u_{{\ell}'+1}]$ is
$(u_{-k'-1},u_0,u_{{\ell}'+1})$-composed with canonical ordering
$u_{-k'-1},u_{-k'},\ldots,u_{{\ell}'+1}$.
\end{claim}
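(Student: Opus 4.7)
The plan is to mirror the proof of Claim 7 almost verbatim, the only new ingredient being that the edge $u_0v_0\in E(G)$ (which holds throughout Case 2) can upgrade an induced $N_{1,1,2}$ into an induced Deer $D$; the $D$-heavy hypothesis of Theorem~\ref{thm-triple1} then provides the contradiction in exactly those configurations where the $N_{1,1,2}$-heavy hypothesis fails.

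First I would suppose the conclusion fails, so that $u_{-k'-1}u_s\notin E(G)$ for every $u_s\in[u_{-k'+1},u_{\ell'}]$, $u_{\ell'+1}u_s\notin E(G)$ for every $u_s\in[u_{-k'},u_{\ell'-1}]$, and $u_{-k'-1}u_{\ell'+1}\notin E(G)$. Next, I would establish the analogue of Claim 7.1: for $z\in\{z_1,v_0\}$ and every $u_s\in[u_{-k'-1},u_{\ell'+1}]\setminus\{u_0\}$, $zu_s\notin\overline{E}(G)$. The $z=z_1$ case transfers essentially verbatim from Claim 7.1 via Lemma 1 and Lemma 2: one uses a Hamilton $(u_0,u_t)$-path in a suitable sub-composed graph, concatenated with arcs of $C$ and the short path $R$, to produce a $(z_1,u_s)$-path covering $V(C)\cup\{z_1\}$. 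For $z=v_0$, one exploits the edge $u_0v_0$ (together with $R$) to build an $o$-cycle containing all of $V(C)\cup V(R)$ whenever $v_0u_s\in\overline{E}(G)$, contradicting the choice of $C$ via Lemma~\ref{lemma-$o$-cycle}.

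Then set $G'=G[[u_{-k'-1},u_{\ell'}]\cup\{z_1\}]$ and $G''=G[[u_{-k'-1},u_{\ell'+1}]\cup\{z_1\}]$. Combining the claim's degree-sum hypothesis, the preceding step, and Claim 10(i), every pair of nonadjacent vertices of $G''$ has degree sum strictly less than $n$; hence $G''$ is $\{K_{1,3},N_{1,1,2}\}$-free. Following Claims 7.3 and 7.4, I deduce that $N_{G'}(u_0)\setminus\{z_1\}$ is a clique and, by induction, that each level set $N_i=\{x\in V(G'):d_{G'}(x,u_{-k'-1})=i\}$ for $1\leq i\leq j$ (where $u_0\in N_j$) is a clique. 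These arguments depend only on the $\{K_{1,3},N_{1,1,2}\}$-freeness of $G''$ and so carry over unchanged.

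Finally, I would run the end-game of Claim 7: choose a neighbor $w$ of $u_0$ in $N_{j-1}$, a neighbor $v$ of $w$ in $N_{j-2}$, and either take $u_{\ell'}$ itself (if $u_{\ell'}\in N_j$) or pick $x\in N_j$ with a neighbor $y\in N_{j+1}$. The resulting six- or seven-vertex set either forms an induced $N_{1,1,2}$ (contradicting $N_{1,1,2}$-heaviness), or, when the edge $u_0v_0$ supplies a chord across the triangle $\{u_0,z_1,v_0\}$, induces a $D$ upon adjoining $v_0$ (contradicting $D$-heaviness, since the relevant non-edges all have degree sum less than $n$ by Claim 10 and Step~2). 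The main obstacle is the bookkeeping needed to verify that every forbidden configuration is covered by exactly one of the two heaviness hypotheses: one must check that the chord provided by $u_0v_0$ leaves a structure which is still an induced $D$ rather than a graph carrying extra chords, and dually that when $u_0v_0$ is not involved as a chord the original $N_{1,1,2}$ argument of Claim 7 still goes through.
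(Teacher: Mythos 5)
Your proposal has a genuine gap at the endgame, and it is exactly the point where Case~2 differs from Case~1. You keep $z_1$ as the off-cycle attachment vertex (setting $G'=G[[u_{-k'-1},u_{\ell'}]\cup\{z_1\}]$) and then try to complete the $N_{1,1,2}$ by adjoining $v_0$, arguing that the extra edge $u_0v_0$ converts the configuration into a Deer $D$. It does not. In Case~2 the set $\{u_0,z_1,v_0\}$ is a triangle ($u_0v_0\in E(G)$, $u_0z_1\in E(G)$, $z_1v_0\in E(G)$), and $u_0$ also lies in the triangle $\{w,x,u_0\}$ (or $\{w,u_0,u_{\ell'}\}$) supplied by the level-set argument. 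Once you verify, as you intend, that $z_1$ and $v_0$ have no neighbours in $[u_{-k'-1},u_{\ell'+1}]\setminus\{u_0\}$, the induced subgraph on the resulting seven vertices is \emph{two triangles glued at $u_0$ with a pendant on each of the other two vertices of the $C$-side triangle}, i.e.\ an $H_{1,1}$, not a $D$. ($D$ has exactly one triangle; here $u_0$ has degree~4 in the induced subgraph, which is impossible in $D$.) Since Section~3 proves Theorem~\ref{thm-triple1}, whose hypothesis is $\{K_{1,3},N_{1,1,2},D\}$-heavy and says nothing about $H_{1,1}$, your configuration yields no contradiction; this subgraph is only useful in Section~4.

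The paper sidesteps this by \emph{discarding} $z_1$ in Case~2 and using $\{v_0,v_1\}$ instead: it sets $G'=G[[u_{-k'-1},u_{\ell'}]\cup\{v_0,v_1\}]$ and, in its Claim~14.1, shows that for $v\in\{v_0,v_1\}$ and every $u_s\in[u_{-k'-1},u_{\ell'+1}]\setminus\{u_0\}$ one has $vu_s\notin\overline{E}(G)$. Since $u_0v_0\in E(G)$, $v_0v_1\in E(G)$ (cycle edge), and $u_0v_1\notin E(G)$ by Claim~3, the path $u_0$-$v_0$-$v_1$ is induced and plays the role that $u_0$-$z_1$-$z_2$ played in Claim~7; the endgame then really produces an $N_{1,1,2}$ and the $N_{1,1,2}$-heavy hypothesis applies. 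If you want to salvage your write-up, replace $\{z_1\}$ by $\{v_0,v_1\}$ throughout and drop the $D$-based branch; the Deer $D$ is needed earlier in Case~2 (the step showing that at least one of $u_{-k-1}u_0$, $v_{\ell+1}v_0$ is not in $\overline{E}(G)$), not inside the proof of this claim.
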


\begin{proof}
Assume the opposite, which implies that for every vertex
$u_s\in[u_{-k'+1},u_{{\ell}'}]$, $u_{-k'-1}u_s\notin E(G)$, and for
every vertex $u_s\in[u_{-k'},u_{{\ell}'-1}]$, we have
$u_{{\ell}'+1}u_s\notin E(G)$ and $u_{-k'-1}u_{{\ell}'+1}\notin
E(G)$.

\begin{subclaim}
Let $v\in\{v_0,v_1\}$ and
$u_s\in[u_{-k'-1},u_{{\ell}'+1}]\backslash\{u_0\}$. Then $vu_s\notin
\overline{E}(G)$.
\end{subclaim}

\begin{proof}
Similarly to Claim 7.1, we have $v_0u_s\notin \overline{E}(G)$.

Now we assume that $v_1u_s\in \overline{E}(G)$.

Note that if $v_0v_2\notin E(G)$, then $d(v_0)+d(v_2)\geq n$. We
have $v_0v_2\in \overline{E}(G)$.

If $s\in[-k'-1,-2]$, then $s+1\in[-k',-1]$. By the definition of a
composed graph, there exists $t\in [1,{\ell}']$ such that $G[u_{s+1},u_t]$
is $(u_{s+1},u_0,u_t)$-composed. By Lemma 1, there exists a
$(u_0,u_t)$-path $P$ such that $V(P)=[u_{s+1},u_t]$. Then
$C'=P\overrightarrow{C}[u_t,v_1]v_1u_s\overleftarrow{C}[u_s,v_0]R$
is an $o$-cycle which contains all the vertices in $V(C)\cup V(R)$,
a contradiction.

If $s=-1$, by Claim 3, we have $v_1u_{-1}\notin \overline{E}(G)$.

If $s=1$, then
$C'=\overleftarrow{C}[u_0,v_{-1}]v_{-1}v_1u_1\overrightarrow{C}[u_1,v_2]v_2v_0R$
is an $o$-cycle which contains all the vertices in $V(C)\cup V(R)$,
a contradiction.

If $s\in[2,{\ell}'+1]$, then $s-1\in[1,{\ell}']$. By the definition
of a composed graph, there exists $t\in [-k',-1]$ such that $G[u_t,u_{s-1}]$
is $(u_t,u_0,u_{s-1})$-composed. By Lemma 1, there exists a
$(u_0,u_t)$-path $P$ such that $V(P)=[u_t,u_{s-1}]$. Then
$C'=P\overleftarrow{C}[u_t,v_{-1}]v_{-1}v_1u_s\overrightarrow{C}[u_s,v_2]v_2v_0R$
is an $o$-cycle which contains all the vertices in $V(C)\cup V(R)$,
a contradiction.
\end{proof}

Let $G'=G[[u_{-k'-1},u_{{\ell}'}]\cup\{v_0,v_1\}]$ and
$G''=G[[u_{-k'-1},u_{{\ell}'+1}]\cup\{v_0,v_1\}]$. Then, similarly
to Claim 7.2, we have

\begin{subclaim}
$G''$, and then $G'$, is $\{K_{1,3},N_{1,1,2}\}$-free.
\end{subclaim}

Thus we can prove the claim similarly to Claim 7.
\end{proof}

Now we choose $k',{\ell}'$ such that\\
(1) $G[u_{-k'},u_{{\ell}'}]$ is $(u_{-k'},u_0,u_{{\ell}'})$-composed;\\
(2) any two nonadjacent vertices in $[u_{-k'},u_{{\ell}'}]$ have
degree sum less than $n$; and\\
(3) $k'+{\ell}'$ is as big as possible.

Similarly to Claim 8, we have

\begin{claim}
$(u_{-k'-1},u_{{\ell}'})$ or $(u_{-k'},u_{{\ell}'+1})$ or
$(u_{-k'-1},u_{{\ell}'+1})$ is $u_0$-good on $C$.
\end{claim}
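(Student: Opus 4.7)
The plan is to mirror Claim 8 almost verbatim, with the composed block $G[u_{-k'},u_{\ell'}]$ from Case 2 playing the role that $G[u_{-k},u_{\ell}]$ played in Case 1. The driving observation is that by maximality of $k'+\ell'$ in condition (3), together with Claim 13, the hypothesis of Claim 13 must fail on the extended interval $[u_{-k'-1},u_{\ell'+1}]$: otherwise one of the three extensions listed in Claim 13 would contradict maximality. Hence there exist two nonadjacent vertices in $[u_{-k'-1},u_{\ell'+1}]$ whose degree sum is at least $n$. Condition (2) rules out both endpoints of this pair lying in $[u_{-k'},u_{\ell'}]$, so the pair must contain $u_{-k'-1}$ or $u_{\ell'+1}$. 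This splits into three cases, corresponding to the three good-pair candidates in the claim.

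In the first case, $d(u_{-k'-1})+d(u_s)\geq n$ for some $u_s\in[u_{-k'+1},u_{\ell'}]$. Lemma~1(i) applied to the $(u_{-k'},u_0,u_{\ell'})$-composed graph $G[u_{-k'},u_{\ell'}]$ produces a Hamilton $(u_0,u_{\ell'})$-path $P$ on $[u_{-k'},u_{\ell'}]$, and Lemma~1(ii) produces a spanning $(u_0u_{\ell'},u_su_{-k'})$-pair $D'$ there. Adjoining the cycle edge $u_{-k'}u_{-k'-1}$ to $D'$ turns it into a spanning $(u_0u_{\ell'},u_su_{-k'-1})$-pair on $[u_{-k'-1},u_{\ell'}]$, while $P$ already spans $[u_{-k'-1},u_{\ell'}]\setminus\{u_{-k'-1}\}$. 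With $x'=u_s$ this verifies the three conditions in the definition of a $u_0$-good pair, showing that $(u_{-k'-1},u_{\ell'})$ is $u_0$-good on $C$. The second case, $d(u_s)+d(u_{\ell'+1})\geq n$ for some $u_s\in[u_{-k'},u_{\ell'-1}]$, is entirely symmetric and yields that $(u_{-k'},u_{\ell'+1})$ is $u_0$-good on $C$.

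In the remaining case, $d(u_{-k'-1})+d(u_{\ell'+1})\geq n$. Apply Lemma~1(i) to $G[u_{-k'},u_{\ell'}]$ in both directions of the canonical ordering, obtaining a Hamilton $(u_0,u_{\ell'})$-path $P'$ and a Hamilton $(u_0,u_{-k'})$-path $P''$ on $[u_{-k'},u_{\ell'}]$. Then extending $P'$ by the cycle edge $u_{\ell'}u_{\ell'+1}$ gives a Hamilton $(u_0,u_{\ell'+1})$-path on $[u_{-k'},u_{\ell'+1}]$, and the disjoint union of $P''\cup u_{-k'}u_{-k'-1}$ with the trivial one-vertex path $u_{\ell'+1}$ is a spanning $(u_0u_{\ell'+1},u_{\ell'+1}u_{-k'-1})$-pair on $[u_{-k'-1},u_{\ell'+1}]$. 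Taking $x'=u_{\ell'+1}$, this shows $(u_{-k'-1},u_{\ell'+1})$ is $u_0$-good on $C$.

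There is no genuinely hard step: the whole argument just transfers the Case~1 proof of Claim~8. The one thing worth a sanity check is that the trivial (one-vertex) component $u_{\ell'+1}$ in the third case really qualifies as an $(\{u_0,u_{\ell'+1}\},\{u_{\ell'+1},u_{-k'-1}\})$-path under the definition given in Section~2, which it does since its single vertex lies in both endpoint sets. Everything else is a direct appeal to Lemma~1 and the fact that $u_{-k'-1}u_{-k'}$ and $u_{\ell'}u_{\ell'+1}$ are edges of $C$.
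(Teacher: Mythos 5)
Your proof is correct and mirrors the paper's own route exactly: the paper dispenses with Claim~15 by saying ``similarly to Claim~8,'' and what you have written is precisely that Claim~8 argument transferred to the Case~2 block, using Claim~13 for the bounds, the extension claim for forcing a heavy pair outside $[u_{-k'},u_{\ell'}]$, and Lemma~1 in each of the three cases. One small slip: where you twice cite ``Claim~13'' for the three-way extension alternative and its degree-sum hypothesis, you mean Claim~14 (Claim~13 only supplies the index bounds $k'\leq r_2-2$, $\ell'\leq r_1-\ell-1$ needed to invoke Claim~14); the mathematics is otherwise exactly the intended argument.
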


By Claim 13, we have $k'\leq r_2-2$ and ${\ell}'\leq r_1-\ell-2$.

From Claims 12 and 15, we can get that there exists a cycle which
contains all vertices in $V(C)\cup V(R)$ by Lemma 3, a
contradiction.

The proof is complete.

\section{Proof of Theorem 9}

Let $C$ be a longest cycle of $G$ with a given orientation. We use
$n$ to denote the order of $G$, and $c$ the length of $C$. Assume
that $G$ is not Hamiltonian. Then $V(G)\backslash V(C)\neq
\emptyset$. Since $G$ is 2-connected, there exists a
$(u_0,v_0)$-path of length at least 2 which is internally disjoint
with $C$, where $u_0,v_0\in V(C)$. Let $R=z_0z_1z_2\cdots z_{r+1}$
be such a path {which is} as short as possible, where $z_0=u_0$ and
$z_{r+1}=v_0$. We assume that the length of
$\overrightarrow{C}[u_0,v_0]$ is $r_1+1$ and the length of
$\overrightarrow{C}[v_0,u_0]$ is $r_2+1$, where $r_1+r_2+2=c$. We
denote the vertices of $C$ by $\overrightarrow{C}=u_0u_1u_2\cdots
u_{r_1}v_0u_{-r_2}u_{-r_2+1}\cdots u_{-1}u_0$ or
$\overleftarrow{C}=v_0v_1v_2\cdots
v_{r_1}u_0v_{-r_2}v_{-r_2+1}\cdots v_{-1}v_0$, where
$u_{\ell}=v_{r_1+1-\ell}$ and $u_{-k}=v_{-r_2-1+k}$. Let $H$ be the
component of $G-C$ which contains the vertices in $[z_1,z_r]$.

As in Section 3, we have the following claims.

\setcounter{claim}{0}
\begin{claim}
Let $x\in V(H)$ and $y\in \{v_1,v_{-1},u_1,u_{-1}\}$. Then $xy\notin
\overline{E}(G)$.
\end{claim}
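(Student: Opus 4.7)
The plan is to reuse the argument from Claim~1 of Section~3 essentially verbatim, since this claim depends only on the structural setup (a longest cycle $C$, an internally disjoint short path $R=z_0z_1\cdots z_{r+1}$, and the component $H$ of $G-C$ containing $[z_1,z_r]$) and makes no use of the heavy-subgraph hypotheses specific to Theorem~9.

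First I would reduce to a single case by symmetry. Reversing the orientation of $C$ swaps $u_1\leftrightarrow u_{-1}$ and $v_1\leftrightarrow v_{-1}$, while reversing the path $R$ (and correspondingly relabeling) swaps the role of $u_0$ with $v_0$, hence the $u$-side with the $v$-side. These two symmetries make the four choices of $y$ equivalent, so it suffices to treat $y=u_1$. Since both $x$ and $z_1$ lie in the connected component $H$ of $G-C$, I would fix an $(x,z_1)$-path $P'$ inside $H$ and concatenate it with the edge $z_1u_0$ of $R$ and with the arc $\overleftarrow{C}[u_0,u_1]$. That arc traverses $C$ from $u_0$ in the reverse direction and thus visits every vertex of $C$ before terminating at $u_1$. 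The concatenation is an $(x,u_1)$-path $P$ in $G$ with $V(P)=V(P')\cup V(C)$, and since $x\notin V(C)$ we have $|V(P)|>c$.

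The conclusion then follows at once from the remark following Lemma~2: an $(x,y)$-path longer than a longest cycle of $G$ forces both $xy\notin E(G)$ and $d(x)+d(y)<n$, which is exactly the statement $xy\notin\overline{E}(G)$. There is no real obstacle here; one only needs to verify that the three concatenated pieces are internally disjoint, which is clear because $V(P')\setminus\{z_1\}\subset V(H)\subset V(G)\setminus V(C)$, the edge $z_1u_0$ has only $u_0$ in common with $V(C)$, and $\overleftarrow{C}[u_0,u_1]\subset V(C)$. The edge $z_1u_0$ exists by the choice of $R$ as a path of length at least~$2$.
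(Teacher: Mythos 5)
Your proposal is correct and follows exactly the paper's argument for the corresponding Claim~1 in Section~3 (the paper simply states in Section~4 that these claims carry over, without reproducing the proof): take an $(x,z_1)$-path $P'$ in $H$, form $P=P'z_1u_0\overleftarrow{C}[u_0,u_1]$, and invoke the remark after Lemma~2. Your additional observation that the claim uses none of the heavy-subgraph hypotheses specific to Theorem~9 is exactly why the Section~3 proof transfers verbatim.
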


\begin{claim}
$u_1u_{-1}\in \overline{E}(G)$, $v_1v_{-1}\in \overline{E}(G)$.
\end{claim}

\begin{claim}
$u_1v_{-1}\notin \overline{E}(G)$, $u_{-1}v_1\notin
\overline{E}(G)$, $u_0v_{\pm 1}\notin \overline{E}(G),u_{\pm
1}v_0\notin \overline{E}(G)$.
\end{claim}

\begin{claim}
Either $u_1u_{-1}$ or $v_1v_{-1}$ is in $E(G)$.
\end{claim}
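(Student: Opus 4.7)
The plan is to mirror the argument given for Claim 4 in Section 3, since the statements of Claims 1, 2, and 3 in this section are identical to their counterparts there and already encode the structural information needed. Specifically, I will argue by contradiction, assuming that both $u_1u_{-1}\notin E(G)$ and $v_1v_{-1}\notin E(G)$.

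Under this assumption, Claim 2 applies nontrivially at each pair: since $u_1u_{-1}\in\overline{E}(G)$ but $u_1u_{-1}\notin E(G)$, we must have $d(u_1)+d(u_{-1})\geq n$, and likewise $d(v_1)+d(v_{-1})\geq n$. Summing these two inequalities gives
\[
  d(u_1)+d(u_{-1})+d(v_1)+d(v_{-1})\geq 2n.
\]
On the other hand, Claim 3 tells us that $u_1v_{-1}\notin\overline{E}(G)$ and $u_{-1}v_1\notin\overline{E}(G)$, which by definition of $\overline{E}(G)$ force $d(u_1)+d(v_{-1})<n$ and $d(u_{-1})+d(v_1)<n$. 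Summing these yields
\[
  d(u_1)+d(u_{-1})+d(v_1)+d(v_{-1})<2n,
\]
contradicting the previous display.

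There is no real obstacle here: once Claims 2 and 3 are in place, the conclusion follows by a one-line counting argument that pits the two ``diagonal'' non-edges against the two ``side'' non-edges. The only thing to verify is that we are genuinely allowed to invoke Claims 2 and 3 in this setting, which we are, since their proofs in Section 4 are stated to go through exactly as in Section 3 and rely only on the 2-connectedness of $G$, the choice of $C$ as a longest cycle, the shortness of $R$, and the claw-heavy hypothesis (which is part of the hypothesis of Theorem~9).
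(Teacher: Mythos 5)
Your proposal is correct and follows exactly the same reasoning as the paper's proof of Claim 4 in Section 3 (which is invoked verbatim in Section 4): assume both $u_1u_{-1}$ and $v_1v_{-1}$ are non-edges, apply Claim 2 to get two degree sums $\geq n$, apply Claim 3 to get the two "crossed" degree sums $< n$, and derive the contradiction by comparing the totals.
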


By Claim 4, without {loss} of generality, we assume that
$u_1u_{-1}\in E(G)$. {Then} we have that $G[u_{-1},u_1]$ is
$(u_{-1},u_0,u_1)$-composed.

\begin{claim}
If $G[u_{-k},u_{\ell}]$ is $(u_{-k},u_0,u_{\ell})$-composed, then
$k\leq r_2-2$ and $\ell\leq r_1-2$.
\end{claim}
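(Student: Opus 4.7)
\medskip

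\noindent\textbf{Proof proposal.} The plan is to mimic the argument of Claim~6 in Section~3 essentially word-for-word, taking advantage of the fact that the setup of Section~4 (cycle $C$, path $R$, labelling $u_i$, $v_i$, and Claims~1--4 of Section~4) is identical to that of Section~3. By symmetry it suffices to prove $k\leq r_2-2$; the bound $\ell\leq r_1-2$ follows by the same reasoning applied to the reversed orientation.

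I would argue by contradiction: suppose $k\geq r_2-1$, so that $u_{-r_2+1}\in [u_{-k},u_{-1}]$. Fix a canonical sequence $D_1,\ldots,D_r$ of the $(u_{-k},u_0,u_\ell)$-composed graph $G[u_{-k},u_\ell]$ and let $\widehat{-r_2+1}$ be the smallest index such that $u_{-r_2+1}\in V(D_{\widehat{-r_2+1}})$. Set $D'=D_{\widehat{-r_2+1}}$, so that $V(D')=[u_{-r_2+1},u_{\ell'}]$ for some $\ell'\geq 1$, and $D'$ is $(u_{-r_2+1},u_0,u_{\ell'})$-composed. By Lemma~1$(i)$ applied (with the roles of the two ends swapped) to $D'$, there is a Hamilton $(u_0,u_{\ell'})$-path $P$ of $D'$ with $V(P)=[u_{-r_2+1},u_{\ell'}]$.

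Now build the closed walk
\[
C' \;=\; v_{-1}\,v_0\,R\,P\,\overrightarrow{C}[u_{\ell'},v_1]\,v_1\,v_{-1}.
\]
Note that $v_{-1}=u_{-r_2}$, so the step from $v_{-1}$ to $v_0$ is an edge of $C$; the step from $v_0$ to $u_0$ is $R$; from $u_0$ to $u_{\ell'}$ is $P$; from $u_{\ell'}$ to $v_1$ follows $\overrightarrow{C}$; and the closing step $v_1 v_{-1}$ is in $\overline{E}(G)$ by Claim~2 of Section~4. Hence $C'$ is a genuine $o$-cycle containing every vertex of $V(C)\cup V(R)$. Applying Lemma~\ref{lemma-$o$-cycle} to $C'$ yields an actual cycle $C^*$ of $G$ with $V(C)\cup V(R)\subseteq V(C^*)$; since the interior $[z_1,z_r]$ of $R$ is nonempty, $|V(C^*)|\geq c+1>c$, contradicting the choice of $C$ as a longest cycle of $G$.

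The step most in need of care is verifying that $C'$ is well-defined as an $o$-path/$o$-cycle, i.e.\ that all its pieces fit together consistently and that $u_{\ell'}$ lies strictly inside $\overrightarrow{C}[u_0,v_0]$ (so that $\overrightarrow{C}[u_{\ell'},v_1]$ makes sense and is vertex-disjoint from $P$ except at $u_{\ell'}$). This follows from $1\leq\ell'\leq\ell\leq r_1$ (by the definition of the composed graph $G[u_{-k},u_\ell]$) together with the identifications $u_{r_1+1-j}=v_j$. No new combinatorial ideas beyond those of Claim~6 of Section~3 are required.
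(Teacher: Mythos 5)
Your proof is correct and follows essentially the same route as the paper: the paper's proof of this claim is a one-line reference to Claim~6 of Section~3, and you have reproduced that argument verbatim, including the same choice of $D'=D_{\widehat{-r_2+1}}$, the same Hamilton $(u_0,u_{\ell'})$-path from Lemma~1$(i)$, and the same $o$-cycle $C'=v_{-1}v_0RP\overrightarrow{C}[u_{\ell'},v_1]v_1v_{-1}$. Your explicit remark about reversing the canonical ordering to obtain the $(u_0,u_{\ell'})$-path (rather than the $(u_0,u_{-r_2+1})$-path that Lemma~1$(i)$ gives directly) is a small but genuine clarification that the paper leaves implicit.
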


The proof of Claim 5 is similar to that of Claim 6 in Section 3.

Now we {prove} the following claim.

\begin{claim}
If $G[u_{-k},u_{\ell}]$ is $(u_{-k},u_0,u_{\ell})$-composed with
canonical ordering $u_{-k},u_{-k+1},\ldots,u_{\ell}$, where $k\leq
r_2-2$ and $l\leq r_1-2$, and any two nonadjacent vertices in
$[u_{-k-1},u_{\ell+1}]$ have degree sum less than $n$, then one of
the following is true:\\
(1) $G[u_{-k-1},u_{\ell}]$ is $(u_{-k-1},u_0,u_{\ell})$-composed
with canonical ordering $u_{-k-1},u_{-k},\ldots,u_{\ell}$,\\
(2) $G[u_{-k},u_{\ell+1}]$ is $(u_{-k},u_0,u_{\ell+1})$-composed
with canonical ordering $u_{-k},u_{-k+1},\ldots,u_{\ell+1}$,\\
(3) $G[u_{-k-1},u_{\ell+1}]$ is $(u_{-k-1},u_0,u_{\ell+1})$-composed
with canonical ordering $u_{-k-1},u_{-k},\ldots,u_{\ell+1}$.
\end{claim}

\begin{proof}
Assume the opposite, which implies that for every vertex
$u_s\in[u_{-k+1},u_{\ell}]$, $u_{-k-1}u_s\notin E(G)$, and for every
vertex $u_s\in[u_{-k},u_{\ell-1}]$, $u_{\ell+1}u_s\notin E(G)$ and
$u_{-k-1}u_{\ell+1}\notin E(G)$.

\begin{subclaim}
For every vertex $z\in\{z_1,z_2\}$ and
$u_s\in[u_{-k-1},u_{\ell+1}]\backslash\{u_0\}$ we have $zu_s\notin
\overline{E}(G)$; and if $z_2u_0\notin E(G)$, then also $z_2u_0\notin
\overline{E}(G)$.
\end{subclaim}

This claim can be proved similarly to Claims 5 and 7.1 in Section 3.

Let $G'=G[[u_{-k-1},u_{\ell}]\cup\{z_1,z_2\}]$ and
$G''=G[[u_{-k-1},u_{\ell+1}]\cup\{z_1,z_2\}]$.

Similarly to Claims 7.2 and 7.3 in Section 3, we have

\begin{subclaim} $G''$, and then
$G'$, is $\{K_{1,3},N_{1,1,2},H_{1,1}\}$-free.
\end{subclaim}

\begin{subclaim}
$N_{G'}(u_0)\backslash\{z_1,z_2\}$ is a clique.
\end{subclaim}

Now, we define $N_i=\{x\in V(G'): d_{G'}(x,u_{-k-1})=i\}$. Then we
have $N_0=\{u_{-k-1}\}$, $N_1=\{u_{-k}\}$ and
$N_2=N_{G'}(u_{-k})\backslash\{u_{-k-1}\}$.

By the definition of a composed graph, we have that $|N_2|\geq 2$. If there
are two vertices $x,x'\in N_2$ such that $xx'\notin E(G')$, then the
graph induced by $\{u_{-k},u_{-k-1},x,x'\}$ is a claw. Thus
$N_2$ is a clique.

We assume $u_0\in N_j$, where $j\geq 2$. Then $z_1\in N_{j+1}$; and
$z_2\in N_{j+1}$ if $z_2u_0\in E(G)$ and $z_2\in N_{j+2}$ if
$z_2u_0\notin E(G)$.

If $|N_i|=1$ for some $i\in[2,j-1]$, say, $N_i=\{x\}$, then $x$ is a
cut vertex of the graph $G[u_{-k},u_{\ell}]$. By the definition of a
composed graph, $G[u_{-k},u_{\ell}]$ is 2-connected. This implies
$|N_i|\geq 2$ for every $i\in[2,j-1]$.

\begin{subclaim}
For $i\in [1,j]$, $N_i$ is a clique.
\end{subclaim}

\begin{proof}
If $i<j$, or $i=j$ and $z_2u_0\notin E(G)$, then we can prove the
assertion similarly
to Claim 7.4 in Section 3. Thus we assume that
$i=j$ and $z_2u_0\in E(G)$.

If $j=2$, the assertion is true by the analysis above. So we assume
that $j\geq 3$, and we have that $N_{j-3},N_{j-2},N_{j-1},N_{j+1}$ is
nonempty and $|N_{j-1}|\geq 2$.

First we prove that for every $x\in N_j\backslash\{u_0\}$, $u_0x\in
E(G)$. We assume that $u_0x\notin E(G)$.

By Claim 6.1 we have $xz_1\notin E(G)$. If $u_0$ and $x$ have a
common neighbor in $N_{j-1}$, denoted $w$, then let $v$ be a
neighbor of $w$ in $N_{j-2}$; but then the graph induced by
$\{w,v,u_0,x\}$ is a claw, a contradiction. Thus we have that $u_0$
and $x$ have no common neighbors in $N_{j-1}$.

Let $w$ be a neighbor of $u_0$ in $N_{j-1}$ and $w'$ be a neighbor
of $x$ in $N_{j-1}$. Then $u_0w',xw\notin E(G)$. Let $v$ be a
neighbor of $w$ in $N_{j-2}$ and $u$ be a neighbor of $v$ in
$N_{j-3}$. If $w'v\notin E(G)$, then the graph induced by
$\{w,v,w',u_0\}$ is a claw, a contradiction. Thus we have $w'v\in
E(G)$, and then the graph induced by $\{v,u,w',x,w,u_0,z_1\}$ is an
$N_{1,1,2}$, a contradiction.

Thus we have $u_0x\in E(G)$ for every $x\in N_j$. Then by Claim 6.3,
we have that $N_j$ is a clique.
\end{proof}

If there exists some vertex $y\in N_{j+1}$ other than $z_1$ and
$z_2$, then we have $yu_0\notin E(G)$ by Claim 6.3. Let $x$ be a
neighbor of $y$ in $N_j$, $w$ be a neighbor of $u_0$ in $N_{j-1}$
and $v$ be a neighbor of $w$ in $N_{j-2}$. Then $xu_0\in E(G)$ by
Claim 6.4 and $xw\in E(G)$ by Claim 6.3. Thus the graph induced by
$\{w,v,x,y,u_0,z_1,z_2\}$ is an $N_{1,1,2}$ if $z_2u_0\notin E(G)$,
and is an $H_{1,1}$ if $z_2u_0\in E(G)$, a contradiction. So we
assume that all vertices in $[u_{-k},u_{\ell}]$ are in
$\bigcup_{i=1}^jN_i$.

If $u_{\ell}\in N_j$, then let $w$ be a neighbor of $u_0$ in
$N_{j-1}$ and $v$ be a neighbor of $w$ in $N_{j-2}$. Then the graph
induced by $\{w,v,u_0,z_1,u_{\ell},u_{\ell+1}\}$ is an $N_{1,1,2}$
if $z_2u_0\notin E(G)$, and is an $H_{1,1}$ if $z_2u_0\in E(G)$, a
contradiction. Thus we have that $u_{\ell}\notin N_j$ and then
$j\geq 3$.

Let $u_{\ell}\in N_i$, where $i\in[2,j-1]$. If $u_{\ell}$ has a
neighbor in $N_{i+1}$, then let $y$ be a neighbor of $u_{\ell}$ in
$N_{i+1}$, and $w$ be a neighbor of $u_{\ell}$ in $N_{i-1}$. Then
the graph induced by $\{u_{\ell},w,y,u_{\ell+1}\}$ is a claw, a
contradiction. Thus we have that $u_{\ell}$ has no neighbors in
$N_{i+1}$.

Let $x\in N_i$ be a vertex other than $u_{\ell}$ which has a
neighbor $y$ in $N_{i+1}$ such that it has a neighbor $z$ in
$N_{i+2}$. Let $w$ be a neighbor of $x$ in $N_{i-1}$, and $v$ be a
neighbor of $w$ in $N_{i-2}$. If $u_{\ell}w\notin E(G)$, then the
graph induced by $\{x,w,u_{\ell},y\}$ is a claw, a contradiction.
Thus we have that that $u_{\ell}w\in E(G)$. Then the graph induced
by $\{w,v,u_{\ell},u_{\ell+1},x,y,z\}$ is an $N_{1,1,2}$, a
contradiction.

Thus the claim holds.
\end{proof}

Now we choose $k,\ell$ such that\\
(1) $G[u_{-k},u_{\ell}]$ is $(u_{-k},u_0,u_{\ell})$-composed with
canonical ordering $u_{-k},u_{-k+1},\ldots,u_{\ell}$;\\
(2) any two nonadjacent vertices in $[u_{-k},u_{\ell}]$ have degree
sum less than $n$; and\\
(3) $k+\ell$ is as big as possible.

Similarly to Claims 8 and 9 in Section 3, we have

\begin{claim}
$(u_{-k-1},u_{\ell})$ or $(u_{-k},u_{\ell+1})$ or
$(u_{-k-1},u_{\ell+1})$ is $u_0$-good on $C$.
\end{claim}

\begin{claim}
There exist $v_{-k'}\in V(\overrightarrow{C}[v_{-1},u_{-k-1}])$ and
$v_{{\ell}'}\in V(\overleftarrow{C}[v_1,u_{\ell+1}])$ such that
$(v_{-k'},v_{{\ell}'})$ is $v_0$-good on $C$.
\end{claim}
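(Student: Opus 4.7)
The plan is to mirror the structure of Claim 9 in Section 3 (the analogous $v$-side claim in the proof of Theorem 8), adapting it to the stronger $\{K_{1,3}, N_{1,1,2}, H_{1,1}\}$-heavy hypothesis. First, I would split on whether $v_1 v_{-1} \in E(G)$. If $v_1 v_{-1} \notin E(G)$, then by Claim 2 we have $d(v_1) + d(v_{-1}) \geq n$, and taking $P = v_0 v_1$ and $D = v_0 v_{-1} \cup v_1$ directly witnesses that $(v_{-1}, v_1)$ is $v_0$-good on $C$. Since $v_{-1}$ lies on $\overrightarrow{C}[v_{-1}, u_{-k-1}]$ and $v_1$ lies on $\overleftarrow{C}[v_1, u_{\ell+1}]$, we are finished in this case.

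If $v_1 v_{-1} \in E(G)$, then $G[v_{-1}, v_1]$ is $(v_{-1}, v_0, v_1)$-composed, and I would run the composed-graph machinery on the $v$-side exactly as was done on the $u$-side. Setting $r'_2 = r_2 - k$ and $r'_1 = r_1 - \ell$, I would first prove the analog of Claim 5 (the length bound): if $G[v_{-k'}, v_{\ell'}]$ is $(v_{-k'}, v_0, v_{\ell'})$-composed, then $k' \leq r'_2 - 1$ and $\ell' \leq r'_1 - 1$. Otherwise, invoking Lemma~1 to produce a Hamilton $(v_0, v_t)$-path in the appropriate subgraph $D_{\widehat{s}}$ and splicing it together with $R$ and with the $u$-side composed block $G[u_{-k}, u_\ell]$ (via another application of Lemma~1 on the $u$-side) yields an $o$-cycle containing $V(C)\cup V(R)$, contradicting the maximality of $C$ through Lemma~\ref{lemma-$o$-cycle}.

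Next I would prove the $v$-side analog of Claim~6: if the composed block on the $v$-side satisfies the degree-sum condition for nonadjacent pairs in $[v_{-k'-1}, v_{\ell'+1}]$, then one of the three one-step extensions is again composed. The argument is a near-verbatim translation of the proof of Claim~6, using the same level decomposition $N_i = \{x : d_{G'}(x, v_{-k'-1}) = i\}$, the $\{K_{1,3},N_{1,1,2},H_{1,1}\}$-freeness of the relevant local subgraph (which holds because any two nonadjacent vertices in it have degree sum less than $n$), and the same case analysis on whether the "extra" vertices $u_0, u_1$ (playing the role that $z_1, z_2$ played on the $u$-side) are adjacent. The key inputs are the nonedge facts in Claim~3 and the fact that $P = \overrightarrow{C}[u_1, v_0] R \overleftarrow{C}[u_0, v_{-1}]$ and its variants are long paths through $R$, which let one forbid dangerous edges in the $v$-region by Lemma~2.

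Finally, choosing $k', \ell'$ so that $G[v_{-k'}, v_{\ell'}]$ is composed, any two nonadjacent vertices inside have degree sum $< n$, and $k' + \ell'$ is maximized, the extension claim forces the existence of a short edge or high-degree pair straddling the boundary. Running the argument of Claim~7 on the $v$-side then gives that one of $(v_{-k'-1}, v_{\ell'}), (v_{-k'}, v_{\ell'+1}), (v_{-k'-1}, v_{\ell'+1})$ is $v_0$-good on $C$; the length bound from the $v$-side analog of Claim~5 guarantees that these vertices still lie in $\overrightarrow{C}[v_{-1}, u_{-k-1}]$ and $\overleftarrow{C}[v_1, u_{\ell+1}]$ respectively. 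The main obstacle is the verification of the extension claim under the $H_{1,1}$ hypothesis, since $H_{1,1}$ only appears when the extra vertex $u_0$ is adjacent to $u_1$ (the $v$-side counterpart of $z_2 u_0 \in E(G)$); this case analysis is exactly parallel to subclaim~6.4 and the paragraphs following it in Section~4, and requires carefully tracking how the pendant structure at $v_0$ interacts with the level decomposition.
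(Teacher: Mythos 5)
Your high-level plan — split on $v_1v_{-1}\in E(G)$, then run the composed-graph machinery on the $v$-side, proving a length bound (the analog of Claim~5/Claim~9.1), an extension claim (the analog of Claim~6/Claim~7), and finally choosing $k',\ell'$ maximal — is exactly what the paper intends. The paper supplies no explicit proof of this claim, merely writing ``Similarly to Claims~8 and~9 in Section~3,'' so the architecture you describe is the right one, and the point you raise about the length bound keeping $v_{-k'-1},v_{\ell'+1}$ inside the prescribed arcs is correct.

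There is, however, a concrete error in the part you flag as the ``main obstacle.'' The two extra vertices attached to the $v$-side block must be the tail of $R$ as seen from $v_0$, namely $z_r$ and $z_{r-1}$ (with $z_{r-1}=z_0=u_0$ when $r=1$), not $u_0$ and $u_1$. The $v$-side counterpart of ``$z_2u_0\in E(G)$'' in the paper's Claim~6.1 of Section~4 is therefore ``$z_{r-1}v_0\in E(G)$,'' which by the minimality of $R$ can only hold when $r=1$ and $u_0v_0\in E(G)$; this is exactly the regime in which both extra vertices sit in the same layer $N_{j+1}$ of the BFS decomposition and the $H_{1,1}$-freeness (rather than just $N_{1,1,2}$-freeness) becomes essential. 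Your version, ``the case analysis on whether $u_0,u_1$ are adjacent,'' is vacuous: $u_0$ and $u_1$ are consecutive on $C$ and hence always adjacent, so that dichotomy cannot drive the $N_{1,1,2}$-versus-$H_{1,1}$ distinction. Once you replace $u_0,u_1$ by $z_r,z_{r-1}$ and condition on $z_{r-1}v_0\in E(G)$, the level decomposition $N_i=\{x: d_{G'}(x,v_{-k'-1})=i\}$ and the clique-layer induction go through verbatim as in Claim~6 of Section~4, and the rest of your argument is sound.
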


From Claims 7 and 8, we can get that there exists a cycle which
contains all the vertices in $V(C)\cup V(R)$ by Lemma 3, a
contradiction.

The proof is complete.

\end{document}